\newtheorem{theorem}{Theorem}[section]
\newtheorem{lemma}[theorem]{Lemma}
\newtheorem{proposition}[theorem]{Proposition}
\newtheorem{corollary}[theorem]{Corollary}
\theoremstyle{definition}
\newtheorem{definition}[theorem]{Definition}
\theoremstyle{remark}
\newtheorem{remark}[theorem]{Remark}
\numberwithin{equation}{section}
\def\dd{\mathrm{d}}
\def\bP{\mathbf{P}}
\def\bR{{\mathbb R}}
\def\sE{{\mathscr E}}
\def\sF{{\mathscr F}}
\def\sA{{\mathscr A}}
\def\sG{{\mathscr G}}
\def\sL{\mathscr{L}}
\def\cD{\mathcal{D}}
\def\bE{\mathbf{E}}
\def\bN{\mathbb{N}}
\def\sH{\mathscr H}
\begin{document}

\title[BD processes are time-changed Feller's Brownian motions]{Birth-death processes are time-changed Feller's Brownian motions}

\author{ Liping Li}
\address{Fudan University, Shanghai, China.  }
\email{liliping@fudan.edu.cn}
\thanks{The author is a member of LMNS,  Fudan University.  He is partially supported by NSFC (No. 11931004 and 12371144). }



\subjclass[2020]{Primary 60J27,  60J40,    60J50,  60J46,  60J55,  60J74.}

\date{\today}



\keywords{Feller's Brownian motions,  Birth-death processes,  Continuous-time Markov chains, Time change,  Boundary conditions,  Local times,  Dirichlet forms,  Approximation.}

\begin{abstract}
A Feller's Brownian motion refers to a Feller process on the interval $[0, \infty)$ that is equivalent to the {\color{blue}killed} Brownian motion before reaching $0$. It is fully determined by four parameters $(p_1, p_2, p_3, p_4)$, reflecting its killing, reflecting, sojourn, and jumping behaviors at the boundary $0$. On the other hand, a birth-death process is a continuous-time Markov chain on $\mathbb{N}$ with a given {\color{blue}birth-death $Q$-matrix}, and it is characterized by three parameters $(\gamma, \beta, \nu)$ that describe its killing, reflecting, and jumping behaviors at the boundary $\infty$. The primary objective of this paper is to establish a connection between Feller's Brownian motion and birth-death process. We will demonstrate that any Feller's Brownian motion can be transformed into a specific birth-death process through a unique time change transformation, and conversely, any birth-death process can be derived from Feller's Brownian motion via time change. Specifically, the birth-death process generated by the Feller’s Brownian motion, determined by the parameters $(p_1, p_2, p_3, p_4)$, through time change, has the parameters:
\[
\gamma = p_1, \quad \beta = 2p_2, \quad \nu_n = \mathfrak{p}_n, \; n \in \mathbb{N},
\]
where $\{\mathfrak{p}_n : n \in \mathbb{N}\}$ is a sequence derived by allocating weights to the measure  $p_4$  in a specific manner. Utilizing the pathwise representation of Feller's Brownian motion, our results provide a pathwise construction scheme for birth-death processes,  addressing a gap in the existing literature.
\end{abstract}

\maketitle

\tableofcontents

\section{Introduction}

To study the boundary behavior of Markov processes, historically, two classical probabilistic models have been developed that seem different but are actually similar. The first model is based on Brownian motion on the half-line $(0, \infty)$, allowing all possible behaviors of the process at the boundary $0$ while ensuring the strong Markov property. This line of work began with Feller's research on the theory of transition semigroup \cite{F52} and was later extended by It\^o and McKean, who completed the pathwise construction of all such processes in \cite{IM63}. These processes are referred to as \emph{Feller's Brownian motion} by It\^o and McKean, with a detailed definition provided in \cite[\S5]{IM63} (see also Definition~\ref{DEF31}).  From an analytical perspective, the key to determining the domain of the generator of Feller's Brownian motion is the boundary condition at $0$:
\begin{equation}\label{eq:12}
p_1f(0) - p_2f'(0) + \frac{p_3}{2}f''(0) + \int_{(0,\infty)} \left(f(0) - f(x)\right) p_4(\dd x) = 0,
\end{equation}
where $p_1, p_2, p_3 \geq 0$ are constants, and $p_4$ is a positive measure on $(0, \infty)$ satisfying $\int_{(0,\infty)} (1 \wedge x) p_4(\dd x) < \infty$ {\color{blue}($p_4$ may be a null measure)}. From a probabilistic perspective, these four parameters $(p_1, p_2, p_3, p_4)$ represent the four possible types of boundary behavior of Feller’s Brownian motion at $0$: \emph{killing}, \emph{reflecting}, \emph{sojourn}, and \emph{jumping}. The work of It\^o and McKean \cite{IM63} deeply explores the mechanisms underlying these four types of boundary behavior, providing a clear and intuitive pathwise representation of Feller's Brownian motion; see \S\ref{SEC32} for a brief summarization.  

The other type of model is a fundamental one in the theory of continuous-time Markov chains, known as the \emph{birth-death process}. This process is induced by a standard transition matrix, whose derivative at time $0$ is the {\color{blue}$Q$-matrix} given by \eqref{eq:11},  on the discrete space $\mathbb{N}$. Its defining characteristic is that from any given point, the process can only jump to its immediate left or right neighbors, a feature akin to the continuity of Brownian motion trajectories. Since the birth-death process may reach $\infty$ in finite time, describing its behavior at $\infty$ and after reaching $\infty$ is a fundamental problem in the theory of birth-death processes. This line of inquiry also began with Feller, who in \cite{F59} observed that the boundary behavior of the birth-death process at $\infty$ bears some intrinsic similarity to the boundary behavior of the aforementioned Feller's Brownian motion at $0$. Feller's approach was analytical; although he only examined partial birth-death processes, he derived the boundary condition at $\infty$ for the resolvent function $F$ (i.e., the function in the domain of the generator) as follows:
\begin{equation}\label{eq:15}
\gamma F(\infty) +\frac{\beta}{2} F^+(\infty) + \sum_{k \in \mathbb{N}} (F(\infty) - F(k)) \nu_k = 0,
\end{equation}
where $F^+$ denotes the discrete gradient (see \S\ref{SEC23}), $\gamma, \beta \geq 0$ are constants, and $\nu = (\nu_k)_{k \in \mathbb{N}}$ is a positive measure {\color{blue}(or a null measure)} on $\mathbb{N}$,  {\color{blue}where $\bN=\{0,1,\cdots\}$ denotes the set of all natural numbers, including zero}. Shortly thereafter, building on Feller's work, Yang in 1965 (see \cite[Chapter 7]{WY92}) completed the analytical construction of all birth-death processes using the resolvent method, thereby finalizing the construction theory from an analytical perspective. In our recent article \cite{L23}, we proved that all (non-Doob) birth-death processes satisfy the boundary condition \eqref{eq:15} observed by Feller.
{\color{blue}Alternatively, a probabilistic construction of all birth-death processes was given by Wang in his 1958 doctoral thesis  (see \cite[Chapter 6]{WY92}).
}
It should be noted that this probabilistic construction differs from the pathwise construction of Feller's Brownian motion by It\^o and McKean, as it instead uses a series of Doob processes to approximate the original birth-death process. In fact, understanding the probabilistic intuition behind the parameters $(\gamma, \beta,  \nu)$ through this construction is challenging,  although, by comparing with \eqref{eq:12}, it becomes evident that these parameters should correspond to the killing, reflecting, and jumping behaviors of the birth-death process at $\infty$, respectively.

Apart from two minor differences, the structural consistency of the boundary conditions \eqref{eq:12} and \eqref{eq:15} is quite evident: First, the reflecting term in \eqref{eq:12} is negative, while in \eqref{eq:15} it is positive; this is because $0$ and $\infty$ are at opposite ends of their respective state spaces, thus causing the direction of the gradient to be exactly opposite. Second, \eqref{eq:15} lacks a sojourn term; this is because in the definition of the birth-death process, the index set of the transition matrix is $\mathbb{N}$, which causes that the birth-death process can not visit $\infty$ for a positive duration (correspondingly, the process that allows sojourn at $\infty$ is called a \emph{generalized birth-death process}, and its index set is $\mathbb{\bN}\cup\{\infty\}$; see \cite{F59}). Naturally, we hope to find a deeper connection between the two beyond methodology.

For the symmetric case (from an analytical perspective, that is, when the transition semigroup satisfies both the Kolmogorov backward and forward equations; see \cite{F59}), there is no jumping behavior at the boundary for either process, and the boundary conditions reduce to the classic \emph{Dirichlet}, \emph{Neumann}, or \emph{Robin} boundaries (see \cite[\S2.3]{L23}). At this point, Feller's Brownian motion and birth-death process are both special cases of a more general class of processes known as \emph{quasidiffusions}. These processes are also referred to as \emph{generalized diffusions} or \emph{gap diffusions} in some literature, such as \cite{KW82, K81}. The study of quasidiffusions originated from Kac and Krein's research on the spectral theory of a class of generalized second-order differential operators \cite{KK74}. Soon after, these self-adjoint operators were realized as strong Markov processes on some closed subset of $[-\infty, \infty]$ with trajectories that satisfy the \emph{skip-free property}; see \cite{K75}. The skip-free property is a combination of the continuity of Brownian motion trajectories and the characteristic of birth-death process trajectories: the trajectory is continuous where the space is continuous and can only jump to adjacent positions where there is a gap in space. In addition to studying quasidiffusions within the classic Feller framework (see \cite{S79}), with the aid of symmetry, we can also use the Dirichlet form theory established by Fukushima (see \cite{CF12,  FOT11}) to study them, as demonstrated in several recent articles \cite{L23b, LL23, L24b}. In general, the connection between symmetric Feller's Brownian motions and symmetric birth-death processes is very clear: they can not only be unified under the more general quasidiffusion but also, the symmetric birth-death process is a time-changed process of a certain symmetric Feller's Brownian motion. For example, the minimal birth-death process corresponds to the time-changed process of {\color{blue}killed} Brownian motion, and the $(Q,1)$-process corresponds to that of reflecting Brownian motion; see \cite[\S3]{L23}.

However, for the non-symmetric case (especially the so-called `pathological' case by Feller, where $p_4$ or $\nu$ is an infinite measure), the connection between Feller's Brownian motion and birth-death process becomes very vague. Indeed, both  theories have been extensively studied and developed within their respective fields. The former has given rise to a mature theory of diffusion processes, see, e.g., \cite{IM74, M68}, which is one of the most important classes of models in the general theory of Markov processes; the latter,  originating from the discrete space, has also evolved a series of theories, such as continuous-time Markov chains  (see,  e.g., \cite{C67}) and Markov jump processes (see, e.g., \cite{C04}).  However, during their parallel development, the connection between the two is more reflected in the mutual borrowing of research methods, and few people discuss the fit as reflected in the boundary conditions \eqref{eq:12} and \eqref{eq:15}.

The goal of our paper is to construct the missing bridge between Feller's Brownian motion and birth-death process. In summary, our main result is that any Feller's Brownian motion can be transformed into a specific birth-death process through a unique time change transformation; conversely, any birth-death process can be derived through the time change transformation of some Feller's Brownian motion.

To facilitate the explanation of our results, we first perform a spatial transformation on the birth-death process as described in \S\ref{SEC41}, aligning it with the natural scale of Feller's Brownian motion and reflecting the boundary point $\infty$ so that it moves to $0$, thereby matching the boundary point of Feller's Brownian motion. The transformed space, {\color{blue}which depends
on the matrix $Q$ through the scale function \eqref{eq:22}}, is denoted by (see \eqref{eq:419})
\[
\overline{E} = \{\hat c_n: n \in \mathbb{N}\} \cup \{0\},
\]
where the subscript $n$ of $\hat c_n$ represents the point before the transformation, and $\lim_{n \to \infty} \hat c_n = 0$ is topologically consistent with the one-point compactification $\mathbb{N} \cup \{\infty\}$ of $\bN$. Under this spatial transformation, aside from the change in the symbol of the state points, the transformed process, which we denote by $\hat X$, remains indistinguishable from the original birth-death process. For convenience, we refer to $\hat X$ as a birth-death process on $\overline{E}$. Additionally, due to the change in the direction of the gradient at the boundary point, the corresponding boundary condition for $\hat X$ is modified to:
\begin{equation}\label{eq:13}
\gamma \hat F(0) - \frac{\beta}{2} \hat F^+(0) + \sum_{k \in \mathbb{N}} (\hat F(0) - \hat F(\hat c_k)) \nu_k = 0,
\end{equation}
where $\hat{F}$ is the transformed function of $F$ in \eqref{eq:15}.
After this modification, the aforementioned boundary condition closely resembles \eqref{eq:12}.

{\color{blue}Our main results, Theorems~\ref{THM42} and \ref{MainTHM}, show that the Feller's Brownian motion  $Y$, subject to the boundary condition \eqref{eq:12}, can always be transformed into a birth-death process on  $\overline{E}$ corresponding to the boundary condition \eqref{eq:13} with parameters 
\begin{equation}\label{eq:14}
\gamma = p_1, \quad \beta = 2p_2, \quad \nu_n = \mathfrak{p}_n, \; n \in \mathbb{N},
\end{equation}
where $\{\mathfrak{p}_n : n \in \mathbb{N}\}$  is the sequence obtained by allocating weights to the measure  $p_4$  in the manner of \eqref{eq:55} and \eqref{eq:56}, via a time change transformation.  
 This time change transformation is uniquely determined in the sense of Theorem~\ref{THM44} and
 induced by a positive continuous additive functional  given by \eqref{eq:41}, which is the integral of  the local times of  $Y$ with respect to the speed measure  $\mu$ determined by the matrix $Q$. It should be noted that the sojourn parameter $p_3$ of  $Y$ does not play any role in the expression \eqref{eq:14}. This is because, to obtain a birth-death process on $\overline{E}$ that experiences no sojourn at the boundary $0$, the time change transformation effectively ``bypasses" the sojourn part of $Y$, resulting in the absence of $p_3$ in \eqref{eq:14}. We will elaborate on this in \S\ref{SEC82}.}
 
 

 From the relationship between the two sets of parameters in \eqref{eq:14}, it is not difficult to further deduce that every birth-death process on  $\overline{E}$  can be obtained by time change from a certain Feller's Brownian motion; see Corollary~\ref{COR66}. This conclusion also provides a pathwise construction for all birth-death processes: the trajectory of  $\hat X$  is the \emph{trace} of the corresponding Feller's Brownian motion on  $\overline{E}$; see \S\ref{SEC52}.

Let us introduce some of the tools that will be used in the proof of our main results. 
For the symmetric case, we primarily employ the theory of Dirichlet forms. The concepts and notation we use are consistent with the foundational references \cite{CF12, FOT11}, so they will not be further elaborated upon in the text.
For the non-symmetric case where $p_4$ is a finite measure, it essentially involves the Ikeda-Nagasawa-Watanabe piecing out transformation (see \cite{INW66}) of the symmetric case. This is very similar to the discussion of birth-death processes found in \cite[\S5 and \S8]{L23}. Based on this piecing out transformation, we can directly construct the trajectories of the birth-death process without relying on Feller's Brownian motion; see \S\ref{SEC6}.
The most challenging scenario arises when $p_4$ is an infinite measure. In this case, the piecing out transformation becomes ineffective, and the jumps of Feller's Brownian motion at the boundary become very frequent and complex. Notably, our previous study \cite{L23} also regrettably stops short of studying this case for the birth-death process. To address this situation, we draw on Wang's idea of constructing an approximating sequence for the birth-death process (see \cite{WY92}), and we similarly construct a sequence of approximating processes for Feller's Brownian motion (see \S\ref{SEC7}). The boundary behavior of these processes is relatively simple, and they can be linked to the approximating sequence of the corresponding birth-death process, thereby ultimately establishing the parameter relationship \eqref{eq:14} between the target processes.

At the end of this section, we briefly describe the commonly used notation. The symbols contained in the Markov process  $Y = (\Omega, \mathscr{G}, \mathscr{G}_t, Y_t, \theta_t, \mathbf{P}_x)$  are standard, as seen in \cite{BG68, S88}. For brevity, some elements are sometimes omitted, such as writing  $Y = (\Omega, Y_t, \mathbf{P}_x)$. The expectation corresponding to the probability measure  $\mathbf{P}_x$  is denoted by  $\mathbf{E}_x$.  In this paper, the cemetery point for Markov processes is uniformly denoted by  $\partial$.  Given a topological space $E$,  $\mathcal{B}(E), \mathcal{B}_+(E),  \mathcal{B}_b(E),  C(E)$ and  $C_b(E)$  represent the space of all Borel measurable, non-negative Borel measurable, bounded Borel measurable functions,  totality of continuous functions, and bounded continuous functions, respectively. If  $E$  is an interval,  $C_c(E)$  and  $C_0(E)$  denote the set of all continuous functions with compact support and the set of all continuous functions that equal  $0$  at the open endpoints, respectively. The notation for continuous functions with superscripts indicates differentiability, for example,  $C^2$  indicates twice continuously differentiable, while  $C^\infty$  indicates infinitely continuously differentiable.  The symbol $\delta_a$ stands for the Dirac measure at $a\in [0,\infty)$. 
 
 \section{Birth-death processes}
 
In this paper, we will strive to use the same terminologies and symbols as in \cite{L23} for the birth-death processes. The following restates only some of the important content.
 
 \subsection{Elements of $Q$-processes}
 
We consider a birth-death {\color{blue}$Q$-matrix} as follows:
\begin{equation}\label{eq:11}
Q=(q_{ij})_{i,j\in \bN}:=\left(\begin{array}{ccccc}
-q_0 &  b_0 & 0 & 0 &\cdots \\
a_1 & -q_1 & b_1 & 0 & \cdots \\
0 & a_2 & -q_2 & b_2 & \cdots \\
\cdots & \cdots &\cdots &\cdots &\cdots 
\end{array}  \right),
\end{equation}
{where $a_k>0$ for $k \geq 1$ and $b_k>0,  q_k=a_k+b_k$ for $k\geq 0$.} (Set $a_0=0$ for convenience.) A continuous-time Markov chain $X=(X_t)_{t\geq 0}$ is called a \emph{birth-death $Q$-process} (or \emph{$Q$-process} for short) if its transition matrix $(p_{ij}(t))_{i,j\in \bN}$ is \emph{standard} and its {\color{blue}derivative at time $0$} is $Q$,  i.e., $p'_{ij}(0)=q_{ij}$ for $i,j\in \bN$.  Readers are referred to \cite{C67, WY92} for the terminologies concerning continuous-time Markov chains; see also \cite{L23}.  In our consideration, two  $Q$-processes with the same transition matrix will not be distinguished. For convenience, we will also refer to such  $(p_{ij}(t))_{i,j\in \bN}$ as a $Q$-process when there is no risk of confusion.


There always exists at least one $Q$-process $X^\text{min}$, known as the \emph{minimal $Q$-process}. This process is killed at the first time it almost reaches $\infty$. Similar to a regular diffusion on an interval, the minimal $Q$-process {\color{blue}has a \emph{scale function} on $\mathbb{N}$:}
\begin{equation}\label{eq:22}
	c_0=0,  \quad c_1=\frac{1}{2b_0}, \quad c_k=\frac{1}{2b_0}+\sum_{i=2}^k \frac{a_1a_2\cdots a_{i-1}}{2b_0b_1\cdots b_{i-1}}, \;k\geq 2
\end{equation}
and a \emph{speed measure} $\mu$ on $\mathbb{N}$:
\begin{equation}\label{eq:03}
\mu(\{0\}):=\mu_0=1,  \quad \mu(\{k\}):=\mu_k=\frac{b_0b_1\cdots b_{k-1}}{a_1a_2\cdots a_k}, \; k\geq 1.
\end{equation}
{\color{blue}
These two quantities determine the Dirichlet form associated with the minimal $Q$-process (see \cite[\S3.1]{L23} for details).  A key identity for the scale function is
\begin{equation}\label{eq:231}
	\bP^\text{min}_k(\sigma^\text{min}_{k+1}<\sigma^\text{min}_{k-1})=\frac{c_k-c_{k-1}}{c_{k+1}-c_{k-1}},\quad k\geq 1,
\end{equation}
where $\bP^\text{min}_k$ denotes the law of  $X^\text{min}$ starting from $k$, and $\sigma^\text{min}_k:=\inf\{t>0:X^\text{min}_t=k\}$ for $k\in \bN$. 
Moreover, the condition $c_\infty:=\lim_{k\rightarrow \infty}c_k<\infty$ is equivalent to the transience of $X^\text{min}$.}
The transition matrix $(p^\text{min}_{ij}(t))$ of $X^\text{min}$ is symmetric with respect to $\mu$ in the sense that $\mu_ip_{ij}^\text{min}(t)=\mu_jp_{ji}^\text{min}(t)$ for all $i,j\in \bN$ and $t\geq 0$.  {\color{blue}In some texts, $\mu$ is also called the (sub-)invariant measure.} 

{\color{blue}
Note that, conversely, the coefficients $a_k$ and $b_k$ can be recovered from the scale function $c$ and the speed measure $\mu$. To proceed, let us introduce two auxiliary quantities:
\[
	R:=\sum_{k=0}^\infty (c_{k+1}-c_k)\cdot \sum_{i=0}^k \mu_i,\quad S:=\sum_{k=0}^\infty c_k\mu_k.
\]
In Feller's classification, the boundary point $\infty$ (for $X^{\text{min}}$) is said to be \emph{regular} if $R < \infty$ and $S < \infty$, or an \emph{exit} if $R < \infty$ and $S = \infty$. We refer the reader to \cite[Definition 3.3]{L23} for the full classification of the boundary point $\infty$ in the sense of Feller.

The minimal $Q$-process is the unique $Q$-process when the boundary point $\infty$ is neither regular nor an exit. In contrast, when $\infty$ is regular or an exit, we have $c_\infty < \infty$, and multiple $Q$-processes may exist beyond the minimal one, including Doob processes and the $(Q,1)$-process.  A Doob process modifies $X^{\text{min}}$ by choosing a return state according to a prescribed distribution each time the process is about to reach $\infty$, then restarting its evolution from that state. This can be formally realized via a piecing-out transformation, as described in \cite[\S5]{L23}.
The $(Q,1)$-process, which is defined only in the regular case, resembles reflecting Brownian motion: it follows the minimal $Q$-process until reaching $\infty$, where it instantaneously reflects back into $\mathbb{N}$ and resumes its evolution. See \cite[\S3.3]{L23} for further details.
}

\subsection{Resolvent representation of $Q$-processes}\label{SEC22}

{\color{blue}Assume that $\infty$ is regular or an exit.}
In what follows,  let us present a well-known analytic approach to characterize all birth-death processes by solving their resolvents.  
For $\alpha>0$ and $i\in \bN$,  define 
\[
	u_\alpha(i)=\mathbf{E}_i^\text{min} e^{-\alpha \zeta^{\text{min}}},
\]
where $\bE^\text{min}_i$ stands for the expectation of $X^\text{min}$ starting from $i$ and $\zeta^{\text{min}}$ is the lifetime of $X^\text{min}$. 
Denote by $(R^\text{min}_\alpha)_{\alpha>0}$ the resolvent of the minimal $Q$-process.  Set
\[
\Phi_{ij}(\alpha):=R^\text{min}_\alpha(i, \{j\}),\quad \alpha>0,  i,j\in \bN. 
\] 
Let $\nu$ be a positive measure on $\bN$ and let $\gamma, \beta\geq 0$ be two constants.  Set $|\nu|:=\sum_{k\geq 1}\nu_k$.  {When both 
\begin{equation}\label{eq:B1}
	\sum_{k\geq 0} \nu_k \left(\sum_{j=k}^\infty (c_{j+1}-c_j) \sum_{i=0}^j \mu_i \right)<\infty,\quad |\nu| +\beta\neq 0, 
\end{equation}
and
\begin{equation}\label{eq:B4}
\beta=0,\quad \text{if }\infty\text{ is an exit}
\end{equation}
are satisfied},  define, for $\alpha>0$ and $i,j\in \bN$,
\begin{equation}\label{eq:B2}
	\Psi_{ij}(\alpha):=\Phi_{ij}(\alpha)+u_\alpha(i) \frac{\sum_{k\geq 0} \nu_k \Phi_{kj}(\alpha)+\beta\mu_j u_\alpha(j)}{\gamma+\sum_{k\geq 0}\nu_k(1-u_\alpha(k))+\beta\alpha\sum_{k\geq 0}\mu_k u_\alpha(k)}.  
\end{equation}
The matrix $(\Psi_{ij}(\alpha))_{i,j\in \bN}$ is called the \emph{$(Q,  \gamma, \beta, \nu)$-resolvent matrix. } For a constant $M>0$,  the $(Q,  M\gamma, M\beta, M\nu)$-resolvent matrix is obviously the same as the $(Q, \gamma,\beta, \nu)$-resolvent matrix.  

The following theorem is attributed to \cite[\S7.6]{WY92}.  Note that a $Q$-process is called \emph{honest} if its transition semigroup $(p_{ij}(t))$ satisfies $\sum_{j\in \bN}p_{ij}(t)=1$ for all $i\in \bN$ and $t\geq 0$.

\begin{theorem}\label{THMB1}
The transition matrix $(p_{ij}(t))_{i,j\in \bN}$ is a $Q$-process that is not the minimal one,  if and only if there exists a unique (up to a multiplicative positive constant) triple $(\nu, \gamma, \beta)\geq 0$ with \eqref{eq:B1} and \eqref{eq:B4} such that the resolvent of $(p_{ij}(t))_{i,j\in \bN}$ is given by
\[
	R_\alpha(i,\{j\})=\Psi_{ij}(\alpha),\quad \alpha>0,  i,j\in \bN,
\]
where $(\Psi_{ij}(\alpha))_{i,j\in \bN}$ is the $(Q, \gamma, \beta, \nu)$-resolvent matrix.  Furthermore,
\begin{itemize}
\item[(1)] $(p_{ij}(t))_{i,j\in \bN}$ is honest,  if and only if $\gamma=0$.  
\item[(2)] $(p_{ij}(t))_{i,j\in \bN}$ is a Doob process,  if and only if $0<|\nu|<\infty$ and $\beta=0$.
\end{itemize}
\end{theorem}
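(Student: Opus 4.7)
My plan is to split the argument into the necessity and sufficiency directions of the resolvent representation, supplemented by the two auxiliary characterizations. The unifying observation is that any $Q$-process resolvent must agree with the minimal resolvent $\Phi_{ij}(\alpha)$ up to an $\alpha$-harmonic perturbation at the boundary $\infty$, and the birth-death structure (one-dimensional state space with a single relevant boundary point) forces this perturbation to be one-dimensional, parameterised exactly by $(\gamma,\beta,\nu)$ subject to \eqref{eq:B1} and \eqref{eq:B4}.

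For necessity, let $R_\alpha$ be the resolvent of a non-minimal $Q$-process. Both $R_\alpha(\cdot,\{j\})$ and $\Phi_{\cdot j}(\alpha)$ solve $(\alpha I - Q)h = \mathbf{1}_{\{j\}}$ on $\bN$ (the backward equation is automatic for birth-death $Q$-matrices, since each row has finite support), and the minimality of $\Phi$ as a resolvent yields $R_\alpha(\cdot,\{j\}) - \Phi_{\cdot j}(\alpha) \ge 0$. This difference then lies in the one-dimensional nonnegative cone of $\alpha$-harmonic functions for the minimal process, which is generated by $u_\alpha$, so there exists $\xi_j(\alpha) \ge 0$ with
\[
R_\alpha(i,\{j\}) = \Phi_{ij}(\alpha) + \xi_j(\alpha)\,u_\alpha(i), \qquad i\in \bN.
\]
Substituting $F(i) := R_\alpha(i,\{j\})$ into the resolvent version of the boundary condition \eqref{eq:15}, using the limits $\lim_{k\to\infty} \Phi_{kj}(\alpha) = 0$ and $\lim_{k\to\infty} u_\alpha(k) = 1$ together with evaluations of the discrete gradients $\Phi^+_{\cdot j}(\alpha)$ and $u_\alpha^+$ at $\infty$ (computable from the minimal resolvent identity $\alpha \sum_j \Phi_{ij}(\alpha) = 1 - u_\alpha(i)$), yields a single linear equation whose solution for $\xi_j(\alpha)$ is precisely the coefficient appearing in \eqref{eq:B2}. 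The conditions \eqref{eq:B1} and \eqref{eq:B4} are exactly what is needed for the numerator and denominator in \eqref{eq:B2} to be finite and well-defined, and uniqueness up to a positive scalar follows from the joint homogeneity of $(\gamma,\beta,\nu)$ in the formula.

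For sufficiency, I would verify that for any $(\gamma,\beta,\nu)\ge 0$ satisfying \eqref{eq:B1} and \eqref{eq:B4}, the matrix $(\Psi_{ij}(\alpha))$ from \eqref{eq:B2} is a standard sub-Markov resolvent with $Q$-derivative: non-negativity and $\alpha\sum_j \Psi_{ij}(\alpha)\le 1$, the resolvent identity $\Psi(\alpha) - \Psi(\beta) = (\beta-\alpha)\Psi(\alpha)\Psi(\beta)$, and $\lim_{\alpha\to\infty}\alpha(\alpha\Psi_{ij}(\alpha) - \delta_{ij}) = q_{ij}$. Each of these reduces, after some bookkeeping, to the corresponding property of $\Phi$ combined with the eigenfunction identity $(\alpha I - Q) u_\alpha = 0$ and the summability condition \eqref{eq:B1}. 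Claim (1) then falls out of the identity $\alpha \sum_j \Psi_{ij}(\alpha) = 1 - \gamma u_\alpha(i)/D(\alpha)$, where $D(\alpha)$ denotes the denominator of \eqref{eq:B2}; honesty is therefore equivalent to $\gamma = 0$. Claim (2) follows by recognising that when $0 < |\nu| < \infty$ and $\beta = 0$, the formula \eqref{eq:B2} coincides with the resolvent of the piecing-out construction of $X^{\min}$ with return distribution proportional to $\nu$ and killing proportional to $\gamma$, i.e., a Doob process; the reverse implication follows from uniqueness.

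The main obstacle will be the resolvent identity in the sufficiency direction, which requires interchanging summation over $\bN$ with $\nu$-integration and the action of $\Phi$; this is where \eqref{eq:B1} enters at full strength. A more probabilistic route, and one more in the spirit of the present paper, is to construct the candidate process directly --- by piecing out when $|\nu| < \infty$, and by the approximation-and-time-change scheme of \S\ref{SEC7} when $|\nu| = \infty$ --- and then to read off the resolvent from the pathwise construction, bypassing the analytic manipulation entirely.
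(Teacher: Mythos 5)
The paper does not prove Theorem~\ref{THMB1}; it cites it from \cite[\S7.6]{WY92}, so there is no in-paper proof to compare against. Your sketch follows the classical Feller--Yang analytic route, and the structural ingredients you identify are correct: the decomposition $R_\alpha(i,\{j\})=\Phi_{ij}(\alpha)+\xi_j(\alpha)u_\alpha(i)$ is valid because the cone of bounded nonnegative $\alpha$-harmonic functions for the minimal process on $\bN$ is one-dimensional (the equation at $i=0$, where $a_0=0$, forces $h(1)$ from $h(0)$), and your verification that $\alpha\sum_j\Psi_{ij}(\alpha)=1-\gamma u_\alpha(i)/D(\alpha)$ correctly delivers claim (1).

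The gap is in the necessity direction. You propose to determine $\xi_j(\alpha)$ by ``substituting $F(i):=R_\alpha(i,\{j\})$ into the resolvent version of the boundary condition \eqref{eq:15}.'' But \eqref{eq:15} is not available at this point: as the paper itself explains (\S\ref{SEC23}), it is a \emph{consequence} of the resolvent representation of Theorem~\ref{THMB1}, derived in \cite{L23} from the $(Q,\gamma,\beta,\nu)$-resolvent formula. Using it as an input to produce the formula is circular. The way Feller's and Yang's argument actually proceeds is to impose the \emph{resolvent equation} $\Psi_{ij}(\alpha)-\Psi_{ij}(\beta)+(\alpha-\beta)\sum_k\Psi_{ik}(\alpha)\Psi_{kj}(\beta)=0$ on the candidate family $\Phi_{ij}(\alpha)+\xi_j(\alpha)u_\alpha(i)$ together with the normalization conditions; this forces a functional equation on $\alpha\mapsto\xi_j(\alpha)$ whose nonnegative solutions are classified and turn out to be parameterised exactly by a triple $(\gamma,\beta,\nu)$ subject to \eqref{eq:B1}--\eqref{eq:B4}. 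The boundary condition \eqref{eq:15} is then read off from that parameterisation, not assumed. Your final paragraph --- constructing the process by piecing out when $|\nu|<\infty$ and by the Doob-process approximation of \S\ref{SEC7}/Appendix~\ref{APPA} when $|\nu|=\infty$, then computing the resolvent --- is indeed the sound alternative (and is Wang's original route), but as written it is a pointer rather than a proof: one still has to show that \emph{every} non-minimal $Q$-process arises this way, which is precisely the content of Wang's approximation theorem.
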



\subsection{Ray-Knight compactification of $Q$-processes}\label{SEC23}

In a previous study \cite{L23}, we obtained a c\`adl\`ag modification for each $Q$-process $X$, denoted by $\bar X=(\bar X_t)_{t\geq 0}$, using the Ray-Knight compactification.  The state space of $\bar X$ is $\mathbb N\cup \{\infty\}$, which corresponds to the Alexandroff compactification of $\mathbb{N}$.  It is important to note that the Ray-Knight compactifications of Doob processes are not normal, whereas those of other $Q$-processes are Feller processes on $\mathbb N\cup \{\infty\}$; see \cite[Corollary~5.2]{L23}.   If no ambiguity arises, we will not distinguish between the $Q$-process and its Ray-Knight compactification. Additionally, we will refer to a non-minimal and non-Doob $Q$-process as a \emph{Feller $Q$-process}.

Regarding a Feller $Q$-process $X$,  we can further derive its infinitesimal generator by utilizing the resolvent representation in Theorem~\ref{THMB1}; see \cite[Theorem~6.3]{L23}.  The crucial fact is that every function $F$ in the domain of the infinitesimal generator satisfies the following boundary condition at $\infty$:
\begin{equation}\label{eq:27}
	\frac{\beta}{2} F^+(\infty)+\sum_{k\in \mathbb N}(F(\infty)-F(k))\nu_k+\gamma F(\infty)=0,
\end{equation}
where $F(\infty):=\lim_{k\rightarrow \infty}F(k)$ (if it exists),  $F^+(k):=(F(k+1)-F(k))/(c_{k+1}-c_k)$ for $k\in \bN$ and $F^+(\infty):=\lim_{k\rightarrow \infty}F^+(k)$ (if it exists).  
(When deriving the boundary condition \eqref{eq:27} in \cite{L23}, we mistakenly wrote the parameter $\frac{\beta}{2}$ as $\beta$. This error occurred because the scale function used in \cite{L23}, specifically \eqref{eq:22}, is half of the scale function in \cite{F59}. However, in the proof of \cite[Theorem~6.3]{L23}, when citing the result from \cite{F59}, particularly the equation above \cite[(6.8)]{L23}, we forgot to multiply by two accordingly. As a result, the parameter $\frac{\beta}{2}$ in \cite[(6.10)]{L23} was incorrectly written as $\beta$. Unfortunately, this mistake was not corrected before the publication of \cite{L23}.)

\section{Feller's Brownian motions}

The terminology of Feller's Brownian motions is borrowed from the renowned article  \cite{IM63} by It\^o and McKean.  This class of Markov processes was first discovered by Feller in \cite{F52}, with its detailed definition found in \cite[\S5]{IM63}.   Recall that a Markov process on $[0,\infty)$ is called a Feller process if its semigroup $(T_t)_{t\geq 0}$ satisfies the following conditions: $T_0$ is the identify mapping, $T_tC_0([0,\infty))\subset C_0([0,\infty))$ and $T_tf\rightarrow f$ in $C_0([0,\infty))$ as $t\rightarrow 0$ for any $f\in C_0([0,\infty))$,  where $C_0([0,\infty))$ is the Banach space consisting of all continuous functions $f$ on $[0,\infty)$ such that $\lim_{x\rightarrow \infty}f(x)=0$.  We provide a definition for Feller's Brownian motions in a modern manner as follows. 

\begin{definition}\label{DEF31}
A Feller process $Y:=(\Omega, \sG,\sG_t,Y_t, \theta_t, (\mathbf{P}_x)_{x\in [0,\infty)})$ with lifetime $\zeta$ on $[0,\infty)$  is called a Feller's Brownian motion if its killed process upon hitting $0$ is identical in law to the {\color{blue}killed} Brownian motion on $(0,\infty)$. 
\end{definition}

\begin{remark}\label{RM32}
This definition excludes the special cases discussed in \cite[\S6]{IM63}.  The `Brownian motions' in these special cases do not satisfy the \emph{normal property} at $0$,  i.e.,  $\bP_0(Y_0=0)\neq 1$,  or the \emph{quasi-left-continuity} at the first time reaching $0$,  i.e.,  for $x>0$, $\bP_x(Y_{\lim_{n\rightarrow \infty}\sigma_n}=0)<1$, where $\sigma_n:=\inf\{t>0:Y_t=\varepsilon_n\}$ for any sequence $\varepsilon_n\downarrow 0$.  

The case 4a. in \cite[\S6]{IM63} {\color{blue}corresponding to $p_2=p_3=0, |p_4|<\infty$} warrants special mention. Similar to Doob processes in the context of $Q$-processes,  {\color{blue}the `Brownian motion' $Y$ for this case is} the piecing out of the {\color{blue}killed} Brownian motion $Y^0$ with lifetime $\zeta^0$ on $(0,\infty)$  with respect to the probability measure $\lambda$ on $(0,\infty)\cup\{\partial\}$.   This process $Y$ is a right process on $(0,\infty)$,  but not a Feller process.  {\color{blue}However, it admits an extension to a Ray process $\bar Y$ on $[0,\infty]$,  where the cemetery $\partial$ is identified with the compactification point $\infty$ of $[0,\infty)$}, and the initial transition function at $0$ is given by $\bar T_0(0,\cdot):=\bar\bP_0(\bar{Y}_0\in \cdot)=\lambda(\cdot)$.  
Analogous to \cite[Theorem~5.1]{L23}, the following expression for the resolvent of $\bar{Y}$ can be easily obtained:
\begin{equation}\label{eq:325}
	\bar{G}_\alpha h(x)=G^0_\alpha h(x)+\bE_x \left( e^{-\alpha \zeta^0}\right)\cdot \frac{\int_{(0,\infty)}G^0_\alpha h(x)\lambda(\dd x)}{1-\bE_\lambda \left( e^{-\alpha \zeta^0}\right)},\quad \forall h\in \mathcal{B}_b((0,\infty)),
\end{equation}
where $G^0_\alpha$ is the resolvent of $Y^0$ and  $\bE_\lambda\left( e^{-\alpha \zeta^0}\right):=\int_{(0,\infty)}\bE_x\left( e^{-\alpha \zeta^0}\right)\lambda(\dd x)$. {\color{blue}
For convenience, we refer to this `Brownian motion' $Y$ as a \emph{Doob's Brownian motion} with instantaneous distribution $\lambda$.}
\end{remark}

{\color{blue}Note that} the reflecting Brownian motion on $[0,\infty)$ is a classic example of a Feller's Brownian motion. {\color{blue}It is also important} to note that not all Feller's Brownian motions exhibit a.s. continuous sample paths. Some Feller's Brownian motions may experience jumps into $(0,\infty)$ from $0$ or just before reaching $0$; {\color{blue}see \S\ref{SEC32}}. 


\subsection{Infinitesimal generators}

It is well known that a function $f\in C_0([0,\infty))$ is said to belong to the domain $\cD(\sL)$ of the infinitesimal generator of $Y$ if the limit
\[
		\sL f:=\lim_{t\downarrow 0}\frac{T_t f-f}{t}
\]
exists in $C_0([0,\infty))$.  The operator $\sL: \cD(\sL)\rightarrow C_0([0,\infty))$ thus defined is called the \emph{infinitesimal generator} of the process $Y$.  
The following characterization of infinitesimal generators of Feller's Brownian motions is attributed to Feller \cite{F52}.  

\begin{theorem}\label{THM32}
A Markov process  $Y=(Y_t)_{t\geq 0}$ on $[0,\infty)$ is a Feller's Brownian motion,  if and only if it is a Feller process on $[0,\infty)$ whose infinitesimal generator on $C_0([0,\infty))$ is $\sL f=\frac{1}{2}f''$ with domain
\begin{equation}\label{eq:37}
\begin{aligned}
\cD(\sL)=&\bigg\{f\in C_0([0,\infty))\cap C^2({\color{blue}(}0,\infty)): f''\in C_0([0,\infty)), \\
	&\quad p_1f(0)-p_2f'(0)+\frac{p_3}{2}f''(0)+\int_{(0,\infty)}\left(f(0)-f(x)\right)p_4(\dd x)=0\bigg\},
\end{aligned}\end{equation}
where  $p_1,p_2,p_3$ are non-negative numbers and $p_4$ is a positive measure on $(0,\infty)$ such that
\begin{equation}\label{eq:330}
	p_1+p_2+p_3+\int_{(0,\infty)}(x\wedge 1)p_4(\dd x)=1
\end{equation}
and
\begin{equation}\label{eq:38}
	|p_4|:=p_4\left((0,\infty) \right)=+\infty \quad \text{if}\quad p_2=p_3=0.
\end{equation}
The parameters $p_1,p_2,p_3,p_4$ are uniquely determined for each Feller's Brownian motion.
\end{theorem}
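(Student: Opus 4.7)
The plan is to establish the two implications of Theorem~\ref{THM32} separately, following the classical analytic approach of Feller \cite{F52}. For the forward direction, assume $Y$ is a Feller's Brownian motion with semigroup $(T_t)$ and generator $\sL$. First I would pin down the action of $\sL$ on the interior: for any $f\in\cD(\sL)$ and $x>0$, Dynkin's formula applied up to the exit time from a small interval $[x-\varepsilon,x+\varepsilon]\subset(0,\infty)$, combined with the hypothesis that $Y$ killed at $0$ coincides with killed Brownian motion, reduces the computation to the classical Brownian generator and yields $\sL f(x)=\tfrac12 f''(x)$. In particular $f\in C^2((0,\infty))$ and $f''=2\sL f\in C_0([0,\infty))$.

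The subtler step is to extract the boundary condition at $0$. For this I would analyze the resolvent: for $g\in C_0([0,\infty))$, writing $u:=R_\alpha g$, the strong Markov property at the hitting time $\tau_0$ of $0$ gives
\begin{equation*}
u(x)=G^0_\alpha g(x)+\bE_x\bigl(e^{-\alpha\tau_0}\bigr)\cdot u(0),\quad x>0,
\end{equation*}
where $G^0_\alpha$ denotes the resolvent of killed Brownian motion. Since $u\in\cD(\sL)$, a single linear constraint must relate $u(0)$, $u'(0^+)$, $u''(0^+)$, and the values of $u$ on $(0,\infty)$; identifying this constraint as a positive linear functional annihilating $\cD(\sL)$, and appealing to a Riesz-type representation together with the positivity of $(T_t)$, yields the non-negative coefficients $p_1,p_2,p_3$ and the positive measure $p_4$ with $\int(1\wedge x)p_4(\dd x)<\infty$, producing exactly the form in \eqref{eq:37}. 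The normalization \eqref{eq:330} is then fixed by evaluating the boundary functional on a convenient approximation of the constant function $1$, while \eqref{eq:38} excludes the degenerate ``Doob's Brownian motion'' case of Remark~\ref{RM32}, for which the Feller property on $[0,\infty)$ fails.

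For the converse, given parameters $(p_1,p_2,p_3,p_4)$ satisfying \eqref{eq:330} and \eqref{eq:38}, I would construct the resolvent $R_\alpha$ directly by solving, for each $g\in C_0([0,\infty))$, the boundary value problem $\alpha u-\tfrac12 u''=g$ on $(0,\infty)$ subject to the boundary condition in \eqref{eq:37}. The explicit formula has the form $R_\alpha g(x)=G^0_\alpha g(x)+\bE_x\bigl(e^{-\alpha\tau_0}\bigr)\cdot c_\alpha(g)$ with $c_\alpha(g)$ a positive linear functional of $g$ determined by the boundary data. Positivity of $R_\alpha$ follows from the signs of the parameters, the contraction estimate $\|\alpha R_\alpha\|_\infty\le 1$ from \eqref{eq:330}, and the property $R_\alpha(C_0([0,\infty)))\subset C_0([0,\infty))$ from the decay of $\bE_x\bigl(e^{-\alpha\tau_0}\bigr)$ as $x\to\infty$. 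The Hille--Yosida theorem then produces a Feller semigroup and a Feller process $Y$, whose killed process at $0$ agrees with killed Brownian motion because $\sL$ reduces to $\tfrac12 f''$ on functions vanishing in a neighborhood of $0$.

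The main obstacle is the Riesz-type identification of the boundary functional in the forward direction: one must show that the single linear constraint cutting out $\cD(\sL)$ decomposes cleanly into \emph{killing}, \emph{reflecting}, \emph{sojourn}, and \emph{jumping} parts with the stated signs and the integrability $\int(1\wedge x)p_4(\dd x)<\infty$, with the last piece arising from analyzing how mass leaves $0$ via a L\'evy-system type argument or, equivalently, from the structure of positive continuous linear forms that vanish on $C_c^\infty((0,\infty))\cap\cD(\sL)$. Uniqueness of the four parameters then follows by successively testing the boundary functional against functions that isolate each component: the constant $1$ to extract $p_1$, functions with prescribed first derivative at $0$ and vanishing outside a small neighborhood to extract $p_2$, those with prescribed second derivative to extract $p_3$, and compactly supported functions in $(0,\infty)$ to recover $p_4$.
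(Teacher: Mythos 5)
Your high-level plan mirrors the paper's: reduce to the generator $\tfrac12 f''$ on the interior via the killed-process hypothesis, extract a boundary condition at $0$, then for the converse solve the boundary value problem and invoke Hille--Yosida. But there are two places where the proposal skips what is actually the substance of the argument.

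First, in the forward direction you compress the key step into ``appealing to a Riesz-type representation together with the positivity of $(T_t)$.'' The boundary functional $\Phi_p(f)=p_1f(0)-p_2f'(0)+\tfrac{p_3}{2}f''(0)+\int(f(0)-f(x))p_4(\dd x)$ is \emph{not} a positive linear functional in the ordinary sense (the $-p_2f'(0)$ and $\tfrac{p_3}{2}f''(0)$ terms can have either sign on nonnegative $f$), so the Riesz representation theorem does not apply directly. What one actually needs is Feller's classification of boundary conditions compatible with the positive maximum principle for second-order operators on a half-line, which is nontrivial and is precisely what the paper delegates to Mandl \cite[p.~39, Thm.~2]{M68} after lifting the semigroup to the compactification $C([0,\infty])$; that lift is also what handles the conservativity/killing parameter cleanly (the paper extracts $\kappa_0=0$ from $\bar T_t 1 = 1$ before reading off $p_1$ from the mass $q_0(\{\infty\})$). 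As written, your ``L\'evy-system type argument'' gesture does not close this gap; you would need to either reproduce Feller's argument or cite it.

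Second, the uniqueness step is incomplete for a different reason. You know only that $\Phi_p$ and $\Phi_{\tilde p}$ have the \emph{same kernel}, namely $\cD(\sL)$, not that they agree as functionals on the larger class $\sG$; you therefore cannot ``test'' them directly against functions outside $\cD(\sL)$. The missing lemma is that two nonzero linear functionals on $\sG$ with a common codimension-one kernel must be proportional, after which the normalization \eqref{eq:330} pins the constant to $1$; that proportionality step must be stated. Also, the constant function $1$ is not in $C_0([0,\infty))$, so ``testing on $1$ to extract $p_1$'' requires passing to the compactified function space $\bar\sG$ (as the paper does), or else using functions $f\equiv 1$ near $0$ and then subtracting the already-recovered $p_4$-contribution, so the order of the testing recipe matters. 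It is worth noting that the paper's actual uniqueness argument (Proposition~\ref{PROA2}) does something genuinely different: it equates the two explicit formulas \eqref{eq:39} for $G_\alpha h(0)$, and then performs a case-by-case asymptotic analysis as $\alpha\to 0$ and $\alpha\to\infty$ to separate the four parameters, relying on estimates like those in Lemma~\ref{LMA1}. That route avoids the functional-analytic proportionality lemma entirely and works directly with a quantity ($G_\alpha h(0)$) determined by the process; your route is cleaner conceptually but needs the proportionality lemma made explicit and the test functions chosen more carefully, especially since when $|p_4|=\infty$ the integral in $\Phi_p(f)$ is only conditionally controlled by the differentiability of $f$ at $0$ and the finiteness of $p_4$ on $[1,\infty)$.
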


{\color{blue}
For the reader' convenience, the necessary details of this proof are deferred to Appendix~\ref{APPB}. Here, we collect some facts about the resolvent $(G_\alpha)_{\alpha>0}$ of $Y$.  Fix $\alpha>0$. By the Hille-Yosida theorem,  we have $\cD(\sL)=G_\alpha C_0([0,\infty))\subset C_0([0,\infty))$, and for any $f=G_\alpha h$ with $h\in C_0([0,\infty))$, it holds that
\begin{equation}\label{eq:32}
\sL f=\alpha G_\alpha h-h.  
\end{equation}
Set $\tau_0:=\inf\{t>0: Y_t=0\}$.  It follows from Definition~\ref{DEF31} and Dynkin's formula that
\begin{equation}\label{eq:33}
	G_\alpha h(x)=G_\alpha^0 h(x)+G_\alpha h(0)\mathbf{E}_x e^{-\alpha \tau_0},\quad x>0,
\end{equation}
where $G^0_\alpha$ denotes the resolvent of the {\color{blue}killed} Brownian motion $Y^0$  on $(0,\infty)$.  It is well known that
\begin{equation}\label{eq:362}
	\mathbf{E}_x e^{-\alpha \tau_0}=e^{-\sqrt{2\alpha}x},
\end{equation}
(see, e.g., \cite[page 26, 5)]{IM74}), and the resolvent density $g^0_\alpha(x,y)$ of $Y^0$,  i.e., $G^0_\alpha(x,\dd y)=g^0_\alpha(x,y)\dd y$ is given by
\begin{equation}\label{eq:310}
g_\alpha^0(x,y)=\left\lbrace
\begin{aligned}
	&W^{-1}u_-(x)u_+(y),\quad  &x\leq y,\\
	&W^{-1}u_-(y)u_+(x),\quad &y\leq x,
\end{aligned} \right.
\end{equation}
where $u_-(x)=\sinh (\sqrt{2\alpha}x), u_+(x)=e^{-\sqrt{2\alpha}x}$, and $W:=W(u_-,u_+)=\frac{\sqrt{2\alpha}}{2}$ is the Wronskian of $u_-,u_+$ (with $W^{-1}:=1/W$); see, e.g., \cite[II\S3, \#7]{M68}. 
 As established in \cite[\S15, 4.]{IM63},  for any $h\in C_0([0,\infty))$,  one has
\begin{equation}\label{eq:39}
	G_\alpha h(0)=\frac{2p_2\int_{(0,\infty)}e^{-\sqrt{2\alpha}x}h(x)\dd x+p_3h(0)+\int_{(0,\infty)}G^0_\alpha h(x)p_4(\dd x)}{p_1+\sqrt{2\alpha}p_2+\alpha p_3+\int_{(0,\infty)}\left(1-e^{-\sqrt{2\alpha}x}\right)p_4(\dd x)}.
\end{equation}
Taken together, equations \eqref{eq:362}, \eqref{eq:310}, and \eqref{eq:39} provide an explicit expression for $G_\alpha h$.  

It is worth emphasizing that in \cite{IM63}, the parameter tuple $(p_1, p_2, p_3, p_4)$ associated with the process $Y$ is obtained by evaluating the expression \eqref{eq:39}. Moreover, the boundary condition \eqref{eq:12}, satisfied by $f = G_\alpha h \in \mathcal{D}(\mathscr{L})$, is equivalent to this expression for $G_\alpha h(0)$. We briefly explain this equivalence below.  In fact, for any $h\in C_0([0,\infty))$,  we have 
\[
	G^0_\alpha h\in C^2((0,\infty))\cap C_0((0,\infty)).
\]
Moreover, its derivative and second order derivative can be expressed as
\[
	W\cdot \left(G^0_\alpha h\right)'(x)=u'_+(x)\int_0^xu_-(y)h(y)\dd y+u'_-(x)\int_x^\infty u_+(y)h(y)\dd y
\]
and
\begin{equation}\label{eq:31}
\frac{1}{2}\left(G^0_\alpha h\right)''=\alpha G^0_\alpha h-h.  
\end{equation}
(See,  e.g., \cite[II\S3, \#7 and \#8]{M68}.)
In particular, this implies that
\begin{equation}\label{eq:315}
 \left(G^0_\alpha h\right)'(0):=\lim_{x\rightarrow 0}\left(G^0_\alpha h\right)'(x)=2\int_0^\infty h(y)e^{-\sqrt{2\alpha}y}\dd y.
\end{equation}
Combining this with \eqref{eq:33}, we obtain
\begin{equation}\label{eq:3112}
	(G_\alpha h)'(0)=2\int_0^\infty h(y)e^{-\sqrt{2\alpha}y}\dd y-\sqrt{2\alpha} G_\alpha h(0).
\end{equation}
In addition,  we have
\begin{equation}\label{eq:34}
\begin{aligned}
	\left(G_\alpha h\right)''(x)&=\left(G_\alpha^0h\right )'' (x)+G_\alpha h(0)\cdot 2\alpha\cdot e^{-\sqrt{2\alpha}x}\\
	&=2\alpha G^0_\alpha h(x)-2h(x)+2\alpha G_\alpha h(0)e^{-\sqrt{2\alpha}x},
\end{aligned}\end{equation}
and hence
\begin{equation}\label{eq:35}
	\left(G_\alpha h\right)''(0):=\lim_{x\rightarrow 0}\left(G_\alpha h\right)''(x)=-2h(0)+2\alpha G_\alpha h(0). 
\end{equation}
Eventually, with the aid of \eqref{eq:33}, \eqref{eq:3112}, and \eqref{eq:35}, we can establish the equivalence between the expression \eqref{eq:39} for $G_\alpha h(0)$ and the boundary condition \eqref{eq:12}.
}

\begin{remark}
When $p_2=p_3=|p_4|=0$ and $p_1=1$,  the condition \eqref{eq:38} is not satisfied.  Nevertheless, this case corresponds to the {\color{blue}killed} Brownian motion on $(0,\infty)$,  {\color{blue}since \eqref{eq:39} evaluates to zero and $G_\alpha$ coincides with $G^0_\alpha$ in this setting.} 
\end{remark}

The triple $(p_1,p_2, p_4)$ plays a role analogous to $(\gamma, \beta,\nu)$ for a $Q$-process,  with some heuristic explanations for the latter triple presented in  \cite[\S2]{L23}.  The additional parameter $p_3$ measures the \emph{sojourn} of the Feller's Brownian motion at $0$.  One one hand,  this can be observed by examining the paths of the Feller's Brownian motion as constructed by It\^o and McKean, as illustrated in \cite[\S15]{IM63}; see also \eqref{eq:321}.  On the other hand,  we can examine the symmetric case,  where the transition semigroup $(T_t)_{t\geq 0}$ of $Y$ is symmetric with respect to some $\sigma$-finite measure $m$ in the sense that 
\[
\int_{[0,\infty)}T_tf(x)g(x)m(\dd x)=\int_{[0,\infty)}f(x)T_tg(x)m(\dd x),\quad \forall t\geq 0, f,g\in \mathcal{B}_b([0,\infty)),
\]
as stated in the following corollary. In this special case, the parameter $p_3/(2p_2)$ represents the mass of the symmetric measure $m$ at $0$. It is well known that when this symmetric measure, also known as the \emph{speed measure} of $Y$, is larger in a specific region, the motion of $Y_t$ will be slower when passing through that region.

\begin{corollary}\label{COR34}
Let $Y=(Y_t)_{t\geq 0}$ be a Feller's Brownian motion on $[0,\infty)$ with parameters $p_1,p_2,p_3,p_4$ as described in Theorem~\ref{THM32}.  The process $Y$ is symmetric if and only if $p_2>0, |p_4|=0$.  Furthermore,  the symmetric measure must be
\begin{equation}\label{eq:312}
	m(\dd x)=\frac{p_3}{2p_2}\delta_0(\dd x)+1_{(0,\infty)}(x)\dd x
\end{equation}
up to a multiplicative constant, 
where $\delta_0$ denotes the Dirac measure at $0$, and the Dirichlet form associated with this symmetric Feller's Brownian motion on $L^2([0,\infty),m)$ is
\begin{equation}\label{eq:313}
\begin{aligned}
	\sF&=H^1([0,\infty)), \\
	\sE(f,g)&=\frac{1}{2}\int_0^\infty f'(x)g'(x)\dd x+\frac{p_1}{2p_2}f(0)g(0),\quad f,g\in \sF,
\end{aligned}\end{equation}
where 
\[
	H^1([0,\infty)):=\{f\in L^2([0,\infty)): f\text{ is absolutely continuous and }f'\in L^2([0,\infty))\}.
\] 
\end{corollary}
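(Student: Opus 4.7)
My plan is to carry out three steps: (i) identify the candidate symmetry measure $m$ using that $Y$ killed at $0$ is the killed Brownian motion; (ii) force the parameters through the explicit resolvent formula \eqref{eq:39}; (iii) read off the Dirichlet form by integration by parts using the boundary condition \eqref{eq:12}. Suppose $Y$ is symmetric with respect to a $\sigma$-finite measure $m$. Since the killed process of $Y$ on hitting $0$ is the killed Brownian motion $Y^0$ (Definition~\ref{DEF31}), which is symmetric with respect to Lebesgue measure on $(0,\infty)$, and since the killed part process inherits symmetry, the essential uniqueness of the symmetry measure of $Y^0$ yields $m|_{(0,\infty)} = \rho\,\dd x$ for some $\rho>0$. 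After normalizing I take $m = c\delta_0 + 1_{(0,\infty)}\dd x$ with $c\geq 0$. Using \eqref{eq:33} and the symmetry of $G^0_\alpha$ on $L^2(\dd x)$, the self-adjointness of $G_\alpha$ on $L^2(m)$ reduces to
\[
G_\alpha f(0)\,L(g) = G_\alpha g(0)\,L(f), \qquad L(f) := c f(0) + \int_0^\infty e^{-\sqrt{2\alpha}x} f(x)\,\dd x,
\]
for all $f,g \in C_b([0,\infty))$; hence $G_\alpha f(0) = k_\alpha L(f)$ for some constant $k_\alpha$ independent of $f$.

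Substituting \eqref{eq:39} and writing $\int G^0_\alpha f(x)\, p_4(\dd x) = \int_0^\infty f(y) h_\alpha(y)\,\dd y$ with $h_\alpha(y) := \int g^0_\alpha(x,y)\,p_4(\dd x)$, the identity $N(f) = k_\alpha D(\alpha) L(f)$ (where $N(f), D(\alpha)$ denote the numerator and denominator of \eqref{eq:39}) decouples, by varying $f$, into $p_3 = c\,k_\alpha D(\alpha)$ and
\[
h_\alpha(y) = \bigl(k_\alpha D(\alpha) - 2p_2\bigr) e^{-\sqrt{2\alpha}y}, \qquad y>0.
\]
The crux is the Green-function identity $(2\alpha - \partial_y^2)\, g^0_\alpha(x,\cdot) = 2\delta_x$ (a consequence of \eqref{eq:31}), which yields $(2\alpha - \partial_y^2) h_\alpha = 2p_4$ as distributions on $(0,\infty)$; since $(2\alpha - \partial_y^2) e^{-\sqrt{2\alpha}y} \equiv 0$, I conclude $p_4 \equiv 0$. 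With $p_4 = 0$ the density relation forces $k_\alpha D(\alpha) = 2p_2$, and then $c = p_3/(2p_2)$. The case $p_2 = 0$ would yield $p_3 = 0$, which combined with $|p_4| = 0$ contradicts \eqref{eq:38}; hence $p_2 > 0$. Conversely, with $p_2 > 0$, $|p_4| = 0$, and $c = p_3/(2p_2)$, formula \eqref{eq:39} gives $G_\alpha f(0) = (2p_2/D(\alpha)) L(f)$, so the resolvent is self-adjoint.

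For the Dirichlet form, I compute $\sE(f,g) := -(\sL f, g)_m$ on $\cD(\sL)\times\cD(\sL)$, split the integral according to the decomposition of $m$, and integrate by parts on $(0,\infty)$ (the boundary term at infinity vanishes because $f, f'' \in C_0$ imply $f'\to 0$) to obtain
\[
\sE(f,g) = \tfrac{1}{2}\int_0^\infty f'(x) g'(x)\,\dd x + \tfrac{1}{2}\bigl(f'(0) - c f''(0)\bigr) g(0).
\]
The boundary condition \eqref{eq:12} with $p_4 = 0$, namely $p_2 f'(0) = p_1 f(0) + (p_3/2) f''(0)$, combined with $c = p_3/(2p_2)$ simplifies to $f'(0) - c f''(0) = (p_1/p_2) f(0)$, producing the stated formula. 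To identify the domain $\sF = H^1([0,\infty))$, I close $(\sE, \cD(\sL))$ in $L^2(m)$: the bilinear form $\frac{1}{2}\int_0^\infty f'g'\,\dd x + (p_1/(2p_2)) f(0) g(0)$ is closed on $H^1([0,\infty))$ (the trace $f \mapsto f(0)$ is continuous there), and $\cD(\sL)$ is $\sE_1$-dense in $H^1([0,\infty))$ as a core for the Feller generator.

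I expect the principal obstacle to be the step forcing $p_4 = 0$: one must justify separating the Dirac mass at $0$ from the Lebesgue density when matching $N(f) = k_\alpha D(\alpha) L(f)$ (achieved by first testing against $f \in C_c((0,\infty))$), and then interpret the resulting pointwise identity for $h_\alpha$ distributionally via the Green-function equation $(2\alpha - \partial_y^2) g^0_\alpha(x,\cdot) = 2\delta_x$ on $(0,\infty)$. A secondary technical item is verifying $f'(x)\to 0$ as $x \to \infty$ for $f \in \cD(\sL)$, needed to discard the boundary term in the integration by parts of Step 3.
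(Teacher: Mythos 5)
Your proposal is correct, and it reaches the same intermediate checkpoint as the paper---namely that $y\mapsto e^{\sqrt{2\alpha}y}\int_{(0,\infty)} g^0_\alpha(x,y)\,p_4(\dd x)$ must be constant---but finishes both halves of the argument by genuinely different routes. To eliminate $p_4$, the paper substitutes the explicit Green kernel \eqref{eq:310}, observes that the resulting integrand $H_y(x)$ is monotone increasing in $y$, lets $y\downarrow 0$ by monotone convergence to see the constant is $0$, and then invokes strict positivity of $H_y(x)$. You instead apply $(2\alpha-\partial_y^2)$ distributionally, using that $g^0_\alpha(x,\cdot)$ solves $(2\alpha-\partial_y^2)g^0_\alpha(x,\cdot)=2\delta_x$ while $e^{-\sqrt{2\alpha}y}$ lies in the kernel of this operator---a cleaner structural argument, but it shifts the burden to the Fubini step needed to pull $(2\alpha-\partial_y^2)$ through the $p_4$-integral (justified by $\int(1\wedge x)p_4(\dd x)<\infty$, so this is sound, just worth writing out). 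For the Dirichlet form, the paper's sufficiency direction avoids any core argument by directly verifying $G_\alpha h\in\sF$ and $\sE_\alpha(G_\alpha h,g)=(h,g)_m$ from the explicit resolvent \eqref{eq:33}, \eqref{eq:39}, \eqref{eq:31}, \eqref{eq:315}, which identifies the resolvent of the form with that of the process at one stroke. Your route computes $-(\sL f,g)_m$ by integration by parts and then closes, which is fine in spirit but leaves the identification $\sF=H^1([0,\infty))$ resting on the assertion that $\cD(\sL)$ is a form-core; that statement is true (the $C_0$-generator of an $m$-symmetric Feller process generates the same form), but it is an additional fact you would need to invoke, whereas the paper's direct resolvent check sidesteps it. You correctly flag both technical items (the Fubini interchange and $f'(x)\to 0$ as $x\to\infty$); the latter indeed holds for $f,f''\in C_0([0,\infty))$ by the standard Taylor/Landau argument.
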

\begin{proof}
To demonstrate the necessity,  let $m$ be a symmetric measure of $Y$.   From \cite[Lemma~4.1.3]{FOT11},  it follows that $m|_{(0,\infty)}$ is the symmetric  measure of $Y^0$.  Hence, without loss of generality, we may assume that $m|_{(0,\infty)}$ is the Lebesgue measure on $(0,\infty)$.  For any $h_1,h_2\in C_0((0,\infty))$,  substituting \eqref{eq:33} and \eqref{eq:39} into
\begin{equation}\label{eq:311}
	\int_{[0,\infty)}G_\alpha h_1(x)h_2(x)m(\dd x)=\int_{[0,\infty)}h_1(x)G_\alpha h_2(x)m(\dd x),
\end{equation}
we obtain
\[
\begin{aligned}
&\iint_{(0,\infty)\times (0,\infty)}h_1(x)h_2(y)e^{-\sqrt{2\alpha} x}m(\dd x)\left(p_4G^0_\alpha\right)(\dd y) \\
&\quad=\iint_{(0,\infty)\times (0,\infty)}h_1(y)h_2(x)e^{-\sqrt{2\alpha} x}m(\dd x)\left(p_4G^0_\alpha\right)(\dd y),\quad \forall h_1,h_2\in C_0((0,\infty)),
\end{aligned}\]
where $\left(p_4G^0_\alpha\right)(\dd y)=\int_{(0,\infty)}p_4(\dd x)G_\alpha^0(x,\dd y)=\left( \int_{(0,\infty)}g^0_\alpha(x,y)p_4(\dd x)\right)\dd y$.  This implies that
\[
H(y):=e^{\sqrt{2\alpha}y}	\int_{(0,\infty)}g^0_\alpha(x,y)p_4(\dd x)
\]
is constant.  Substituting \eqref{eq:310} into $H(y)$,  we have
\[
W\cdot H(y)=\int_{(0,\infty)} H_y(x)p_4(\dd x),
\]
where 
\[
	H_y(x)=\left\lbrace
	\begin{aligned}
		&\sinh(\sqrt{2\alpha}x),\quad &x\leq y,\\
		&\sinh(\sqrt{2\alpha}y)e^{-\sqrt{2\alpha}(x-y)},  &y\leq x.
	\end{aligned} \right.
\]
It is straightforward to verify that $H_y(x)$ is increasing in $y$.  Letting $y\downarrow 0$,  we conclude that $H(y)\equiv 0$.  Since $H_y$ is strictly positive for $y>0$,  it must hold that $|p_4|=0$.  Substituting \eqref{eq:33} and \eqref{eq:39} into \eqref{eq:311} again, but now taking $h_1,h_2\in C_0([0,\infty))$,  we further obtain
\[
	\left(p_3-2p_2m(\{0\})\right)\cdot \left(h_1(0)\int_0^\infty h_2(x)e^{-\sqrt{2\alpha}x}\dd x-h_2(0)\int_0^\infty h_1(x)e^{-\sqrt{2\alpha}x}\dd x \right)=0.
\]
This clearly implies
\[
	p_3=2p_2\cdot m(\{0\}).
\]
{\color{blue}In particular},  if $p_2=0$,  then $p_3=0$.  This contradicts the condition \eqref{eq:38}.  Therefore, we conclude that $p_2>0, |p_4|=0$,  and the symmetric measure is precisely \eqref{eq:312}. 

Next,  we consider the case where $p_2>0$ and $|p_4|=0$.  Note that \eqref{eq:313} is, in fact,  a regular Dirichlet form on $L^2([0,\infty), m)$; see, e.g., \cite[\S5.3]{F14}.  It suffices to show that
\[
	G_\alpha h\in \sF,\quad \sE_\alpha(G_\alpha h,g)=(h,g)_m,\quad \forall h,g\in C_c^\infty([0,\infty)). 
\]
Indeed,  $G_\alpha^0h\in H^1_0((0,\infty))=\{f\in H^1([0,\infty)): f(0)=0\}$, and hence \eqref{eq:33} implies that $G_\alpha h\in \sF$.  Using \eqref{eq:33},  we further have
\begin{equation}\label{eq:314}
\begin{aligned}
	\sE_\alpha(G_\alpha h,g)&=\frac{1}{2}\int_0^\infty \left(G^0_\alpha h\right)'(x)g'(x)\dd x+\frac{G_\alpha h(0)}{2}\int_0^\infty \left(e^{-\sqrt{2\alpha}x}\right)'g'(x)\dd x  \\
	&\quad + \frac{p_1}{2p_2}G_\alpha h(0)g(0)+\alpha \int_0^\infty G_\alpha^0h(x)g(x)\dd x\\
	&\quad +\alpha G_\alpha h(0)\int_0^\infty e^{-\sqrt{2\alpha}x}g(x)\dd x+\frac{\alpha p_3}{2p_2}G_\alpha h(0)g(0).
\end{aligned}\end{equation}
Substituting \eqref{eq:39}, \eqref{eq:31},  and \eqref{eq:315} into \eqref{eq:314},  we can obtain that
\[
	\sE_\alpha(G_\alpha h,g)=\int_0^\infty h(x)g(x)\dd x+\frac{p_3}{2p_2}h(0)g(0)=(h,g)_m.
\]
This completes the proof.
\end{proof}

It is worth noting that the $L^2$-generator of symmetric Feller's Brownian motion,  i.e.,  the $L^2$-generator of the Dirichlet form \eqref{eq:313},  is derived in, e.g., \cite[Theorem~5.3]{F14}.

\subsection{Pathwise construction}\label{SEC32}

The sample paths of a Feller's Brownian motion were constructed by It\^o and McKean in \cite{IM63}.  The symmetric cases characterized in Corollary~\ref{COR34} correspond to either reflecting Brownian motion (for $p_1=0$) or elastic Brownian motion (for $p_1>0$), up to a time change transformation induced by the speed measure \eqref{eq:312}.  Their pathwise constructions are quite clear; see also \cite[\S10]{IM63}.  
For the readers' convenience,  we will restate some necessary details of the pathwise construction for non-symmetric cases in this subsection.

\subsubsection{$p_2=0,p_3>0, |p_4|<\infty$}\label{SEC321}

This case was examined in \cite[\S9]{IM63}.  The sample paths of Feller's Brownian motion can be constructed as follows: Given a reflecting Brownian motion with sample paths $t\mapsto W^+_t$ starting at a point $x\in [0,\infty)$,  let $Y_t:=W^+_t$ up to the first hitting time $\tau^+_0:=\inf\{t>0:W^+_t=0\}$ of $0$. Then, make $Y$ wait at $0$ for an exponential holding time $\mathfrak{e}$ {\color{blue}with mean $p_3/(p_1+|p_4|)$, which is independent of $W^+$};
at the end of this time $\mathfrak{e}$ (only applicable in the case where $p_1+|p_4|>0$), let it jump to a point in $(0,\infty)\cup \{\partial\}$ according to the distribution $\lambda$ given by
\begin{equation}\label{eq:320}
	\lambda(\dd x)|_{(0,\infty)}:=\frac{p_4(\dd x)}{p_1+|p_4|},\quad \lambda(\{\partial\}):=\frac{p_1}{p_1+|p_4|}.
\end{equation}
If the reaching point is in $(0,\infty)$,  let it start afresh; if it jumps to $\partial$,  let $Y_t:=\partial$ at all later times. 


\subsubsection{$p_2>0,0<|p_4|<\infty$}\label{SEC322}

This case was addressed in \cite[\S12]{IM63} using an increasing L\'evy process,  defined by \eqref{eq:319}.  In fact,  we can provide an alternative construction via the Ikeda-Nagasawa-Watanabe piecing out procedure (see \cite{INW66}), similar to the approach in \cite[Theorem~8.1]{L23} for $Q$-processes with $|\nu|<+\infty$; see also \cite[III\S4]{M68}.  

To be precise,  let us begin with a symmetric Feller's Brownian motion $Y^1=(Y^1_t)_{t\geq 0}$ with parameters $(p_1+|p_4|,p_2,p_3,0)$.  Let $\lambda$ be the probability measure on $(0,\infty)\cup \{\partial\}$ defined as in \eqref{eq:320}. 
Then the Feller's Brownian motion $Y=(Y_t)_{t\geq 0}$ with parameters $(p_1,p_2,p_3,p_4)$ such that $p_2>0,0<|p_4|<\infty$ is actually the piecing out of $Y^1$ with respect to $\lambda$,  as described in \cite[Appendix~A]{L23}. 
Intuitively, $Y$ repeatedly splices resurrection paths at the death times of $Y^1_t$, with the resurrection points randomly determined by the distribution $\lambda$. 


\subsubsection{$|p_4|=\infty$}\label{SEC323}

When $|p_4|=\infty$,  the pathwise construction becomes significantly more challenging. This was accomplished by It\^o and McKean in \cite[\S12-\S15]{IM63}. 

We first consider the case where $p_1=p_3=0$ and $|p_4|=+\infty$ ($p_2\geq 0)$.  Let $Z=(Z(t))_{t\geq 0}$ be a \emph{subordinator},  i.e.,  an increasing L\'evy process on $\mathbb{R}$ with $Z(0)=0$,  whose distribution has the Laplace transform
\begin{equation}\label{eq:319}
\bE e^{-xZ(t)}=\exp\left\{-t\left(p_2x+\int_{(0,\infty)}(1-e^{-xy})p_4(\dd y)\right) \right\}.  
\end{equation}
In \cite{IM63},  this subordinator is referred to as an \emph{increasing differential process}.  For every $s\geq 0$,  define
\[
	Z^{-1}(s):=\inf\{t>0:Z(t)>s\}.
\]
Further, let $W^+=(W^+_t)_{t\geq 0}$ be a reflecting Brownian motion on $[0,\infty)$, independent of $Z$,  with local time $(\ell_t)_{t\geq 0}$ at $0$ given by
\begin{equation}\label{eq:324}
\ell_t=\lim_{\varepsilon\downarrow 0}\frac{1}{2\varepsilon}\int_0^t 1_{\{W^+_s<\varepsilon\}}\dd s,\quad \forall t\geq 0.
\end{equation}
Note that the Revuz measure of $\ell$ with respect to $W^+$ is $\frac{\delta_0}{2}$; see,  e.g.,  \cite[X.  Proposition~2.4]{RY99}.  It was shown in \cite[\S13]{IM63} that
\begin{equation}\label{eq:316}
Y^1_t:=Z(Z^{-1}(\ell_t))-\ell_t+W^+_t,\quad t\geq 0
\end{equation}
is indeed a Feller's Brownian motion with parameters $(0,p_2,0,p_4)$.  
For a detailed explanation of the sample paths defined by \eqref{eq:316}, please refer to \cite[\S12]{IM63}. Note that this pathwise construction \eqref{eq:316} also applies to the case $p_1=p_3=0, p_2>0$, and $|p_4|<+\infty$.  

Regarding the general case,  we note that
\begin{equation}\label{eq:322}
	\ell^{Y^1}_t:=Z^{-1}(\ell_t),\quad t\geq 0
\end{equation}
is the local time of the process defined by \eqref{eq:316} at $0$,  as established in \cite[\S14]{IM63}.  Define $\mathfrak{f}(t):=t+p_3\ell^{Y^1}_t$ and $\mathfrak{f}^{-1}(t):=\inf\{s>0:\mathfrak{f}(s)>t\}$.  Then, the time-changed process of \eqref{eq:316} with respect to $\mathfrak{f}$ is
\begin{equation}\label{eq:321}
	Y^2_t:=Y^1_{\mathfrak{f}^{-1}(t)},\quad t\geq 0.
\end{equation}
This process $Y^2$ is a Feller's Brownian motion with parameters $(0,p_2,p_3,p_4)$.  
The desired Feller’s Brownian motion $Y$ with parameters $(p_1,p_2,p_3,p_4)$ is  obtained as the subprocess of \eqref{eq:321} perturbed by the \emph{multiplicative functional} 
\begin{equation}\label{eq:327}
	M_t:=e^{-p_1 \ell^{Y^1}_{\mathfrak{f}^{-1}(t)}}, \quad  t\geq 0.
\end{equation}
A rigorous construction of the subprocess can be found in, e.g., \cite[III, \S3]{BG68}.  Roughly speaking,  one can take a random time $\zeta$ such that
\begin{equation}\label{eq:326}
	\bP_x(\zeta>t|Y^1)=e^{-p_1 \ell^{Y^1}_{\mathfrak{f}^{-1}(t)}},\quad \forall t>0,
\end{equation}
and then kill the process $Y^2$ at time $\zeta$.  


\subsection{Local times of Feller's Brownian motion}

In \cite[\S14]{IM63},  It\^o and McKean examined the local time \eqref{eq:322} of the special Feller's Brownian motion defined by \eqref{eq:316} at $0$.  What we focus on here is the local time of a general Feller's Brownian motion at a given point $a>0$, {\color{blue}which is regular for $\{a\}$ in the sense of \cite[Definition~11.1]{BG68}}.  

Consider a Feller's Brownian motion on $[0,\infty)$:
\[
Y=(\Omega, \sG,\sG_t,Y_t, \theta_t, (\mathbf{P}_x)_{x\in [0,\infty)})
\] 
 with lifetime $\zeta$,  where $(\sG_t)_{t\geq 0}$ is the \emph{augmented natural filtration} on $\Omega$.  Let $\sG_\infty:=\bigvee_{t\geq 0}\sG_t$ denote the $\sigma$-algebra generated by $\bigcup_{t\geq 0}\sG_t$.   We adjoin to $\Omega$ the dead path $[\partial]$ with  $Y_t([\partial]):=\partial$ for all $t\geq 0$.
A positive continuous additive functional of $Y$ is defined as follows. For detailed discussions, see, e.g., \cite[IV\S1]{BG68} and \cite[Definition~A.3.1]{CF12}. 

\begin{definition}\label{DEF35}
A family $A=(A_t)_{t\geq 0}$ of functions from $\Omega$ to $[0,\infty]$ is called a \emph{positive continuous additive functional} (PCAF for short) of $Y$ if there exists $\Lambda \in \sG_\infty$ such that
\[
	\bP_x(\Lambda)=1 \text{ for } x\in [0,\infty)\quad \text{ and }\quad \theta_t\Lambda \subset \Lambda \text{ for } t\geq 0,
\]
and the following conditions are satisfied:
\begin{itemize}
\item[(A1)] For each $t\geq 0$,  $A_t|_\Lambda\in \sG_t|_\Lambda$. 
\item[(A2)] For every $\omega\in \Lambda$,  $A_\cdot(\omega)$ is continuous on $[0,\infty)$,  $A_0(\omega)=0$,  $A_t(\omega)<\infty$ for $t<\zeta(\omega)$, and $A_t(\omega)=A_{\zeta(\omega)}(\omega)$ for $t\geq \zeta(\omega)$.  
\item[(A3)] For $\omega\in \Lambda$ and every $t,s\geq 0$,  $A_{t+s}(\omega)=A_t(\omega)+A_s(\theta_t\omega)$.  
\end{itemize}
The set $\Lambda$ is called a \emph{defining set} of $A$.  We further make the convention $A_t([\partial])= 0$ for all $t\geq 0$.  
\end{definition}

The \emph{fine support} of a PCAF $(A_t)_{t\geq 0}$ is defined as 
\[
\text{Supp}(A):=\{x\in [0,\infty): \bP_x(R=0)=1\},
\]
where $R(\omega):=\inf\{t>0:A_t(\omega)>0\}$.  According to \cite[V. Theorem~3.13]{BG68},  for a given $a>0$, there exists a PCAF (unique up to a multiplicative constant)  of $Y$ with fine support $\{a\}$. 

Recall that $\tau_0=\inf\{t>0: Y_t=0\}$. The killed process $Y^0$ can be written as
\[
	Y^0=\left(\Omega, \sG,\sG_t, Y^0_t,\theta^0_t,(\bP_x)_{x\in (0,\infty)}\right),
\]
where $Y^0_t$ is defined as in \eqref{eq:317},  $\theta^0_t(\omega):=\theta_t(\omega)$ for $t<\tau_0(\omega)$ and $\theta^0_t(\omega):=[\partial]$ for $t\geq \tau_0(\omega)$; see, e.g., \cite[(12.21i)]{S88}.  The lifetime of $Y^0$ is $\zeta^0:=\zeta\wedge \tau_0(=\tau_0)$.  We can  define the PCAFs for $Y^0$ and their fine supports analogously.  The following fact is crucial to our discussion.

\begin{lemma}\label{LM36}
Given $a>0$,  let $\ell^a=(\ell^a_t)_{t\geq 0}$ be a PCAF of $Y$ with fine support $\{a\}$.  Then 
\[
	\ell^{a,0}_t:=\left\lbrace
	\begin{aligned}
		&\ell^a_t,\quad &t<\zeta^0, \\
		&\ell^a_{\zeta^0},\quad &t\geq \zeta^0
	\end{aligned}
	\right.
\]
is a PCAF of $Y^0$ with fine support $\{a\}$.  
\end{lemma}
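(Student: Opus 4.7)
The plan is to verify the three axioms (A1)--(A3) of Definition~\ref{DEF35} for the family $\ell^{a,0}$ on the process $Y^0$, and then compare the two fine supports. Let $\Lambda \in \sG_\infty$ be a defining set for $\ell^a$, and take $\Lambda' := \Lambda$ together with the convention $\ell^{a,0}_t([\partial]) = 0$. Since $\zeta^0 = \tau_0$ is an $(\sG_t)$-stopping time, we can rewrite $\ell^{a,0}_t = \ell^a_{t\wedge \zeta^0}$, which makes (A1) immediate from the adapted-and-stopped structure. Axiom (A2) is equally quick: continuity and $\ell^{a,0}_0 = 0$ are inherited from $\ell^a$, the value $\ell^{a,0}_t = \ell^a_t$ is finite for $t < \zeta^0 \le \zeta$ by property (A2) of $\ell^a$, and by construction $\ell^{a,0}_t$ is constant on $[\zeta^0,\infty)$.

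The only point that needs a genuine case split is additivity (A3), namely $\ell^{a,0}_{t+s}(\omega) = \ell^{a,0}_t(\omega) + \ell^{a,0}_s(\theta^0_t\omega)$ for $\omega\in \Lambda$. If $t \ge \zeta^0(\omega)$, then $\theta^0_t\omega = [\partial]$, both summands on the right contribute as they should ($\ell^{a,0}_s(\theta^0_t\omega)=0$ and $\ell^{a,0}_t(\omega) = \ell^a_{\zeta^0(\omega)}$), and the left side equals the same value by the stabilization in (A2). If $t < \zeta^0(\omega)$, then $\theta^0_t\omega = \theta_t\omega$ and $\zeta^0(\theta_t\omega) = \zeta^0(\omega) - t$. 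I would then distinguish $t+s < \zeta^0(\omega)$ (where the identity reduces to additivity of $\ell^a$ and both entries on the right are unstopped) from $t+s \ge \zeta^0(\omega)$, where on the left one gets $\ell^a_{\zeta^0(\omega)}(\omega) = \ell^a_t(\omega) + \ell^a_{\zeta^0(\omega)-t}(\theta_t\omega)$ by (A3) for $\ell^a$, and on the right $\ell^{a,0}_s(\theta_t\omega) = \ell^a_{\zeta^0(\omega)-t}(\theta_t\omega)$ by the stopping rule. Both sides match in every case.

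For the fine support, set $R^0(\omega) := \inf\{t>0 : \ell^{a,0}_t(\omega) > 0\}$. Since $\ell^{a,0}$ coincides with $\ell^a$ on $[0,\tau_0)$, and since $\tau_0 > 0$ $\bP_x$-a.s. for every $x \in (0,\infty)$, the events $\{R^0 = 0\}$ and $\{R = 0\}$ agree $\bP_x$-a.s. on $(0,\infty)$. Therefore
\[
	\mathrm{Supp}(\ell^{a,0}) \cap (0,\infty) = \mathrm{Supp}(\ell^a) \cap (0,\infty) = \{a\},
\]
and since the state space of $Y^0$ is $(0,\infty)$, this is the full fine support. I do not foresee any real obstacle here; the only thing that requires care is keeping track of the dead path $[\partial]$ and the identity $\zeta^0\circ \theta_t = \zeta^0 - t$ on $\{t<\zeta^0\}$ during the case analysis for (A3).
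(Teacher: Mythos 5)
Your proof is correct and follows essentially the same approach as the paper: verify (A1)--(A3) with the case split on $t$ versus $\zeta^0(\omega)$ (using $\zeta^0\circ\theta_t=\zeta^0-t$ on $\{t<\zeta^0\}$ as the key identity), and identify the fine support via $\ell^{a,0}_t\le\ell^a_t$ together with $\bP_a(\tau_0>0)=1$. Your rendering of (A3) is slightly more granular than the paper's, but the substance is identical.
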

\begin{proof}
Assuming without loss of generality that the defining set of $\ell^a$ is $\Omega$, 
it is straightforward to verify that $\ell^{a,0}$ satisfies properties (A1) and (A2).
To establish property (A3) for $\ell^{a,0}$, it suffices to consider the case where $t+s\geq \tau_0(\omega)$.  If $t\geq \tau_0(\omega)$,  then $\ell^{a,0}_t(\omega)=\ell^a_{\tau_0(\omega)}(\omega)$ and $\ell^{a,0}_s(\theta^0_t\omega)=\ell^{a,0}_s([\partial])=\ell^a_0([\partial])=0$. Hence, we have
\begin{equation}\label{eq:318}
	\ell^{a,0}_{t+s}(\omega)=\ell^{a,0}_t(\omega)+\ell^{a,0}_s(\theta^0_t\omega).
\end{equation}
If $t<\tau_0(\omega)$,  \eqref{eq:318} can be verified using $\tau_0(\theta_t\omega)=\tau_0(\omega)-t\leq s$.  Thus,  $\ell^{a,0}$ is indeed a PCAF of $Y^0$.  The fine support of $\ell^{a,0}$ is $\{a\}$ because $\ell^{a,0}_t\leq \ell^a_t$ and $\mathbf{P}_a(\tau_0>0)=1$. 
\end{proof}

{\color{blue}Note that $Y^0$ is the {\color{blue}killed} Brownian motion on $(0,\infty)$}.  Therefore, we can define the \emph{Revuz measure} $\nu_a$ of $\ell^{a,0}$ with respect to $Y^0$ as follows:
\[
\begin{aligned}
	\int_{(0,\infty)}f(x)\nu_a(\dd x)&=\lim_{t\downarrow 0}\frac{1}{t}\int_{(0,\infty)}\mathbf{E}_x\left( \int_0^tf(Y^0_t)\dd\ell^{a,0}_t\right)\dd x \\
	&=\lim_{t\downarrow 0}\frac{1}{t}\int_{(0,\infty)}\mathbf{E}_x\left( \int_0^{t\wedge \tau_0}f(Y_t)\dd\ell^{a}_t \right)\dd x.
\end{aligned}\]
Note that $\nu_a$ is a constant multiple of $\delta_a$. 
Unlike the approach of normalizing local times by the values of their potentials as described in \cite[V. Theorem~3.13]{BG68},  we opt to use the unique PCAF with fine support $\{a\}$ in the following sense.

\begin{definition}\label{DEF37}
Given $a>0$,  a PCAF $L^a=(L^a_t)_{t\geq 0}$ with $\text{Supp}(L^a)=\{a\}$ is called the local time of $Y$ at $a$ if the Revuz measure of $L^{a,0}:=(L^a_{t\wedge \zeta^0})_{t\geq 0}$ with respect to $Y^0$ is $\delta_a$.
\end{definition}

In the symmetric case,  where $p_2>0$ and $|p_4|=0$,  we can also define the Revuz measure of the local time $L^a$ with respect to $Y$ and the symmetric measure $m$.  According to \cite[Proposition~4.1.10]{CF12},  this Revuz measure is also equal to the Dirac measure $\delta_a$.  Therefore, the definition of local time provided here is consistent with the definition of local time in the theory of Dirichlet forms.

\section{Time-changed Feller's Brownian motions are birth-death processes}

Consider the birth-death {\color{blue}$Q$-matrix} \eqref{eq:11} and from now on, assume that $\infty$ is regular for the minimal $Q$-process.  {\color{blue}In particular},  the scale function $(c_k)_{k\geq 0}$ given by \eqref{eq:22} satisfies 
\[
	c_\infty=\lim_{k\rightarrow\infty}c_k<\infty,
\]
and the speed measure $\mu$,  as defined in \eqref{eq:03}, is finite; see, e.g., \cite[Remark~3.4]{L23}.

The aim of this section is to demonstrate that any Feller's Brownian motion can be converted into a $Q$ -process through a time change transformation and a spatial homeomorphism. The special cases of {\color{blue}killed} and reflecting Brownian motions have been analyzed in \cite[\S3]{L23}.  In these cases, the transformed  $Q$-processes are the minimal  $Q$-process and the  $(Q,1)$-process, respectively.

\subsection{Spatial transformation}\label{SEC41}

The formulation we will present encounters a significant issue because the boundary point $0$ of Feller’s Brownian motion and the boundary point $\infty$ of the $Q$-process are located at opposite ends of their respective state spaces. Additionally, unlike Feller’s Brownian motion, the $Q$-process is not on the natural scale, {\color{blue}that is, the function $s(k)=k$ on $\bN$ does not satisfy
\[
	\bP^\text{min}_k(\sigma^\text{min}_{k+1}<\sigma^\text{min}_{k-1})=\frac{s(k)-s(k-1)}{s(k+1)-s(k-1)},\quad k\geq 1;
\]
see \eqref{eq:231}.} 

{\color{blue}Fortunately}, both issues can be resolved by applying a straightforward spatial transformation to the $Q$-process.
To address this, define $\hat c_n:=c_\infty-c_n$ for $n\in \mathbb{N}$.  Let
\begin{equation}\label{eq:419}
	E:=\{\hat{c}_n: n\in \mathbb{N}\},\quad \overline E=E\cup \{0\}
\end{equation}
and 
\begin{equation}\label{eq:410}
	\Xi:E\rightarrow \mathbb{N},\quad \hat{c}_n\mapsto n.
\end{equation}
Clearly, $\Xi$ can be extended to a homeomorphism between $\overline{E}$ and $\bN\cup \{\infty\}$,  where $\overline{E}$ is endowed with the relative topology of $\bR$.  
For each $Q$-process $X=(X_t)_{t\geq 0}$ on $\mathbb{N}\cup \{\infty\}$,
\begin{equation}\label{eq:4311}
	\hat{X}_t:=\Xi^{-1}(X_t),\quad t\geq 0
\end{equation}
defines a continuous-time Markov chain on $\overline E$. {\color{blue}According to \eqref{eq:231}, the function $\hat{s}(x):=x$, $x\in \overline{E}$, satisfies 
\[
	\hat\bP^\text{min}_{\hat c_k}(\hat \sigma^\text{min}_{\hat c_{k-1}}<\hat\sigma^\text{min}_{\hat c_{k+1}})=\frac{\hat s(\hat c_{k})-\hat s(\hat c_{k+1})}{\hat s(\hat c_{k-1})-\hat s(\hat c_{k+1})},\quad k\geq 1,
\]
where $\hat{X}^\text{min}$ denotes the process $\hat{X}$ killed upon its first hitting time of $0$, $\hat\bP^\text{min}_{\hat c_k}$ is the law of $\hat{X}^\text{min}$ starting from $\hat c_k$, and $\hat \sigma^\text{min}_{\hat c_{k}}:=\inf\{t>0:\hat{X}^\text{min}_t=\hat{c}_k\}$ for $k\in \bN$.}

{\color{blue}The process $\hat{X}$ defined as \eqref{eq:4311}} is a Feller process on $\overline{E}$ whenever $X$ is a Feller $Q$-process.  For convenience,  we also refer to $\hat{X}=(\hat{X}_t)_{t\geq 0}$ as a Doob process or a Feller $Q$-process (on $E$ or $\overline{E}$).  


\subsection{Time change}

We first prepare the ingredient,  specifically the PCAF,  for the time change transformation on a Feller's Brownian motion $Y$.  For $\hat c_n\in E$,  let $L^{\hat c_n}=(L^{\hat c_n}_t)_{t\geq 0}$ denote the local time of $Y$ at $\hat c_n$ as defined in Definition~\ref{DEF37}.  Define
\begin{equation}\label{eq:41}
	A_t=\sum_{n\in \mathbb{N}}\mu_nL^{\hat{c}_n}_t,\quad t\geq 0.
\end{equation}
We will show that $A=(A_t)_{t\geq 0}$ is a PCAF of $Y$.

\begin{lemma}\label{LM41}
The family of functions $A=(A_t)_{t\geq 0}$ defined as \eqref{eq:41} is a PCAF of the Feller's Brownian motion $Y$.  Furthermore,  
\begin{itemize}
\item[(1)] If $p_2=0,p_3>0$, and $|p_4|<\infty$,  then the fine support of $A$ is $E$;
\item[(2)] Otherwise, the fine support of $A$ is $\overline{E}$.
\end{itemize}
\end{lemma}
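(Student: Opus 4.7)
The plan is to verify the three PCAF conditions (A1)--(A3) of Definition~\ref{DEF35} for $A=(A_t)_{t\geq 0}$, and then analyze the fine support case-by-case. Adaptedness (A1) and the additive cocycle (A3) descend immediately from each $L^{\hat c_n}$ to the non-negative countable sum. For (A2) the essential content is pointwise finiteness of $A_t$ on $[0,\zeta)$, from which continuity follows: once each $A_T$ is a.s.\ finite, the Weierstrass M-test applied pathwise with $\mu_n L^{\hat c_n}_T$ as dominating summands yields uniform convergence of $A^N_t := \sum_{n=0}^N \mu_n L^{\hat c_n}_t$ to $A_t$ on $[0,T]$, so $A_\cdot$ inherits continuity from the $A^N_\cdot$.

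To secure pointwise finiteness, I would bound the $\alpha$-potential $U^\alpha A(x):=\mathbf{E}_x\int_0^\infty e^{-\alpha t}\,dA_t$. Monotone convergence together with the strong Markov property at $\tau_0$, Definition~\ref{DEF37} (which pins down the Revuz measure of $L^{\hat c_n,0}$ with respect to $Y^0$ as $\delta_{\hat c_n}$), and \eqref{eq:362} give, for $x>0$,
\[
U^\alpha A(x) \;=\; \sum_{n} \mu_n\, g^0_\alpha(x,\hat c_n)\;+\;e^{-\sqrt{2\alpha}\,x}\,U^\alpha A(0).
\]
The first term is bounded by $\mu(\mathbb{N})/\sqrt{2\alpha}$ via \eqref{eq:310}, which is finite since $\mu$ is a finite measure under the standing regularity of $\infty$. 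For the second, decomposing via the first hitting time $\sigma_{\hat c_n}$ gives $U^\alpha L^{\hat c_n}(0)=\mathbf{E}_0[e^{-\alpha \sigma_{\hat c_n}}]\,U^\alpha L^{\hat c_n}(\hat c_n)$; I would bound $U^\alpha L^{\hat c_n}(\hat c_n)$ uniformly in $n$ using the explicit resolvent \eqref{eq:39} (all $\hat c_n$ lie in the bounded interval $[0,c_\infty]$), and then sum using $\sum_n\mu_n<\infty$. Boundedness of $U^\alpha A$ then implies $A_T<\infty$ $\mathbf{P}_x$-a.s.\ for every finite $T$, completing (A2) and hence the PCAF property.

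The fine support is analyzed via $R(\omega):=\inf\{t>0:A_t(\omega)>0\}$. Every $\hat c_n\in E$ is regular for $\{\hat c_n\}$ under $Y$ (inherited from $Y^0$), so $\mathbf{P}_{\hat c_n}(R=0)=1$ and $\hat c_n\in\text{Supp}(A)$; conversely, any $x\in(0,\infty)\setminus E$ has $\mathbf{P}_x(R>0)=1$ because $E$ has no accumulation point in $(0,\infty)$ and $Y$ is continuous in the interior. The dichotomy is therefore entirely about the point $0$. In case~(1), with $p_2=0,\,p_3>0,\,|p_4|<\infty$, the pathwise construction of \S\ref{SEC321} forces $Y$ to hold at $0$ for a strictly positive exponential time $\mathfrak{e}$ before its first escape; during $[0,\mathfrak{e})$ we have $Y_t=0\notin E$, so $R\geq \mathfrak{e}>0$ $\mathbf{P}_0$-a.s.\ and $\text{Supp}(A)=E$.

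In case~(2), either $p_2>0$ or $|p_4|=\infty$, and I would show $\sigma_E:=\inf\{t>0:Y_t\in E\}=0$ $\mathbf{P}_0$-a.s. If $p_2>0$, the reflecting component of the It\^o--McKean construction produces Brownian excursions into $(0,\infty)$ in every right-neighborhood of $0$, and since $\hat c_n\downarrow 0$ each such excursion crosses $\hat c_n$ for all sufficiently large $n$. If $|p_4|=\infty$, the pathwise formulas \eqref{eq:316}--\eqref{eq:321} show that boundary-jump times accumulate at $0$, and after each jump to some $x>0$ the Brownian piece of the trajectory visits every $\hat c_n\leq x$ before returning to $0$. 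Either way $R=0$ $\mathbf{P}_0$-a.s., so $0\in\text{Supp}(A)$. The principal obstacle is the uniform-in-$n$ bound on $U^\alpha L^{\hat c_n}(\hat c_n)$ from the second paragraph, along with the careful justification, via the pathwise representations in \S\ref{SEC32}, that entrance into $E$ is truly immediate in the most singular subcases of (2) (e.g., $p_2=p_3=0$ with $p_4$ concentrated away from $0$).
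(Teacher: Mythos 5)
Your overall architecture closely tracks the paper's: $\alpha$-potential decomposition at $\tau_0$, the $Y^0$-Revuz identification $\mathbf{E}_x\int_0^{\tau_0}e^{-\alpha t}\,\dd L^{\hat c_n}_t=g^0_\alpha(x,\hat c_n)$, uniform-in-$n$ control of the self-potential, then summability of $\mu$ — followed by a case split on $0\in\text{Supp}(A)$ driven by the It\^o--McKean pathwise picture.

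There is, however, a genuine gap in the finiteness step, which you yourself flag as the ``principal obstacle.'' You propose to bound $U^\alpha L^{\hat c_n}(\hat c_n)$ ``using the explicit resolvent \eqref{eq:39}'' because the $\hat c_n$ lie in a compact interval. This does not work: \eqref{eq:39} gives $G_\alpha h(0)$ for test functions $h$, not the potential of a PCAF, and compactness of $\{\hat c_n\}$ by itself gives nothing uniform (the self-potential of a point local time can degenerate as the point approaches a sticky boundary in general). What actually closes the argument is the self-referential equation obtained by combining your two decompositions: $U^\alpha L^{\hat c_n}(\hat c_n)=g^0_\alpha(\hat c_n,\hat c_n)+e^{-\sqrt{2\alpha}\hat c_n}\,\mathbf{E}_0\!\left[e^{-\alpha\sigma_{\hat c_n}}\right]\,U^\alpha L^{\hat c_n}(\hat c_n)$, which together with $\mathbf{E}_0\!\left[e^{-\alpha\sigma_{\hat c_n}}\right]\le 1$ yields $U^\alpha L^{\hat c_n}(\hat c_n)\le g^0_\alpha(\hat c_n,\hat c_n)/(1-e^{-\sqrt{2\alpha}\hat c_n})=(1+e^{-\sqrt{2\alpha}\hat c_n})/\sqrt{2\alpha}\le 2/\sqrt{2\alpha}$. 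The paper arrives at the same bound ($k_n\le\sqrt{2}$ for $\alpha=1$) via \cite[V, Theorem~3.13]{BG68} and integration against the Lebesgue measure using \cite[(4.1.3)]{CF12}; either route is fine, but you must actually produce the bound, and \eqref{eq:39} is not the right instrument.

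Two smaller points. First, for the PCAF property you still need a legitimate defining set $\Lambda\in\sG_\infty$ on which the three conditions hold $\mathbf{P}_x$-surely for \emph{every} $x$; the paper builds $\Lambda$ explicitly by intersecting the defining sets of the $L^{\hat c_n}$ with $\{A_t<\infty,\ \forall t<\zeta\}$ and checks $\mathbf{P}_x(\Lambda)=1$ from the potential bound — ``a.s.\ finite'' alone is not quite the statement of Definition~\ref{DEF35}(A2). Second, your case~(2) argument for $0\in\text{Supp}(A)$ (directly showing $\sigma_E=0$ $\mathbf{P}_0$-a.s.) is a valid alternative to the paper's proof by contradiction ($\mathbf{P}_0(R>0)=1$ would force $Y_t\equiv 0$ on $[0,R)$, impossible by the regular-diffusion property when $p_2>0,|p_4|<\infty$ and by $Y_t\ge W^+_t$ when $|p_4|=\infty$); the two are morally equivalent, but in the subcase $p_2=0,|p_4|=\infty$ you should be explicit that the mechanism is $Y_t\ge W^+_t$ from \eqref{eq:316} rather than ``reflecting excursions,'' since there is no reflecting component when $p_2=0$.
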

\begin{proof}
We first demonstrate that 
\begin{equation}\label{eq:46}
\mathbf{E}_x \int_0^\infty e^{-t}\dd A_t=\sum_{n\in \mathbb{N}}\mu_n\mathbf{E}_x\int_0^\infty e^{-t}\dd L^{\hat{c}_n}_t<\infty,\quad \forall x\in [0,\infty).
\end{equation}
The main task is to estimate $\mathbf{E}_x\int_0^\infty e^{-t}\dd L^{\hat{c}_n}_t$ for each $n\in \mathbb{N}$.  Let $T_n:=\inf\{t>0: Y_t=\hat{c}_n\}$.  According to \cite[V. Theorem~3.13]{BG68}, there exists a positive constant $k_n$ such that
\begin{equation}\label{eq:42}
	\mathbf{E}_x\int_0^\infty e^{-t}\dd L^{\hat{c}_n}_t=k_n\mathbf{E}_x e^{-T_n},\quad \forall x\in [0,\infty).
\end{equation}
By the strong Markov property of $Y$ and \cite[IV,  Proposition~1.13]{BG68},  we obtain
\[
\mathbf{E}_x\int_0^\infty e^{-t}\dd L^{\hat{c}_n}_t=\mathbf{E}_x\int_0^{\tau_0} e^{-t}\dd L^{\hat{c}_n}_t+\mathbf{E}_x e^{-\tau_0}\cdot \mathbf{E}_0\int_0^\infty e^{-t}\dd L^{\hat{c}_n}_t.
\]
Integrating both sides with respect to the Lebesgue measure $\mathfrak{m}$ on $[0,\infty)$,  we have
\[
\begin{aligned}
	\mathbf{E}_\mathfrak{m}\int_0^\infty e^{-t}\dd L^{\hat{c}_n}_t&=\mathbf{E}_\mathfrak{m}\int_0^{\tau_0} e^{-t}\dd L^{\hat{c}_n}_t+\mathbf{E}_\mathfrak{m} e^{-\tau_0}\cdot \mathbf{E}_0\int_0^\infty e^{-t}\dd L^{\hat{c}_n}_t \\
	&=(1-\mathbf{E}_{\hat{c}_n}e^{-\tau_0})+\mathbf{E}_\mathfrak{m} e^{-\tau_0}\cdot \mathbf{E}_0\int_0^\infty e^{-t}\dd L^{\hat{c}_n}_t,
\end{aligned}
\]
where the second equality follows from \cite[(4.1.3)]{CF12}.  Substituting \eqref{eq:42} and $\mathbf{E}_x e^{-\tau_0}=e^{-\sqrt{2}x}$ for $x>0$ into the above equation, we obtain
\begin{equation}\label{eq:43}
k_n=\frac{\sqrt{2}(1-e^{-\sqrt{2}\hat{c}_n})}{\sqrt{2}\mathbf{E}_\mathfrak{m} e^{-T_n}-\mathbf{E}_0e^{-T_n}}.
\end{equation}
The strong Markov property also implies that for any $0<x<\hat c_n$,
\begin{equation}\label{eq:44}
\begin{aligned}
	\mathbf{E}_xe^{-T_n}&=\mathbf{E}_x\left(e^{-T_n}; T_n<\tau_0\right)+\mathbf{E}_x\left(e^{-T_n}; T_n>\tau_0\right) \\
	&=\mathbf{E}_x\left(e^{-T_n}; T_n<\tau_0\right)+\mathbf{E}_x\left(e^{-\tau_0}; T_n>\tau_0\right)\mathbf{E}_0 e^{-T_n}.
\end{aligned}\end{equation}
For $x>\hat c_n$,  it holds that 
\begin{equation}\label{eq:45}
	\mathbf{E}_xe^{-T_n}=e^{-\sqrt{2}(x-\hat{c}_n)}.
\end{equation}
Substituting \eqref{eq:44}, \eqref{eq:45} into \eqref{eq:43} and using \cite[Problem 6 of page 29]{IM74},  we deduce that
\begin{equation}\label{eq:415}
k_n=\frac{\sqrt{2}(e^{\sqrt{2}\hat c_n}-e^{-\sqrt{2}\hat c_n})}{2(e^{\sqrt{2}\hat c_n}-\mathbf{E}_0 e^{-T_n})}.
\end{equation}
Since $\mathbf{E}_0e^{-T_n}\leq 1$,  it follows that 
\[
\mathbf{E}_x\int_0^\infty e^{-t}\dd L^{\hat{c}_n}_t\leq k_n\leq \frac{\sqrt{2}(e^{\sqrt{2}\hat c_n}-e^{-\sqrt{2}\hat c_n})}{2(e^{\sqrt{2}\hat c_n}-1)}=\frac{\sqrt{2}}{2}(1+e^{-\sqrt{2}\hat{c}_n})\leq \sqrt{2}.
\]
Note that $\sum_{n\in \mathbb{N}}\mu_n<\infty$.  Therefore,  \eqref{eq:46} can be concluded.

To prove that \eqref{eq:41} is a PCAF of $Y$,  we start by considering the defining sets $\Lambda_n\in \sG_\infty$ of $L^{\hat{c}_n}$.  Define
\[
	\Lambda:=\left(\bigcap_{n\in \mathbb{N}}\Lambda_n \right) \bigcap  \left\{\omega\in \Omega:A_t(\omega)<\infty,\forall t<\zeta(\omega) \right\}.
\]
Clearly,  $A_t|_{\bigcap_n \Lambda_n}\in \sG_t|_{\bigcap_n \Lambda_n}$.  
Note that 
\begin{equation}\label{eq:47}
	\left\{\omega\in \Omega:A_t(\omega)<\infty,\forall t<\zeta(\omega) \right\}=\bigcap_{t>0}\left( \{A_t<\infty, t<\zeta\}\cup \{\zeta\leq t\}\right).
\end{equation}
Thus,  it is straightforward to verify that $\Lambda\in \sG_\infty$ and that $\Lambda$ satisfies all the conditions in Definition~\ref{DEF35} except for
\begin{equation}\label{eq:48}
	\mathbf{P}_x(\Lambda)=1,\quad \forall x\in [0,\infty).  
\end{equation}
To show \eqref{eq:48}, assume for contradiction that
\[
	\mathbf{P}_x\left( \left(\bigcap_{t>0}\left( \{A_t<\infty, t<\zeta\}\cup \{\zeta\leq t\}\right) \right)^c \right)=\mathbf{P}_x\left(\bigcup_{t>0}\{A_t=\infty, t<\zeta\} \right)>0.
\]
This implies $\mathbf{P}_x(\bigcup_{t>0}\{A_t=\infty\})>0$.  Since $A_t$ is increasing in $t$,  $\{A_t=\infty\}$ is also increasing in $t$.  Therefore,  there exists $t_0>0$ such that $\mathbf{P}_x(A_{t_0}=\infty)>0$.   We have
\[
\begin{aligned}
	\mathbf{E}_x\int_0^\infty e^{-t}\dd A_t&\geq \mathbf{E}_x\left(\int_0^{t_0} e^{-t}\dd A_t; \{A_{t_0}=\infty\} \right)\\
	& \geq e^{-t_0}\mathbf{E}_x\left(\int_0^{t_0}\dd A_t; \{A_{t_0}=\infty\} \right)\\
	&=\infty. 
\end{aligned}\]
This contradicts \eqref{eq:46}.  Thus, \eqref{eq:48} holds.

Finally,  we examine the fine support of $A$. 
Note that $\dd L^{\hat{c}_n}_t$ (as a measure in $t$) vanishes outside $\{t: Y_t=\hat{c}_n\}$.  Thus, $\dd A_t$ vanishes on $\{t: Y_t\notin \overline{E}\}$.  Consequently, by \cite[V. Corollary 3.10]{BG68}, we have $\text{Supp}(A)\subset \overline{E}$.  On the other hand,  since $A_t\geq \mu_nL^{\hat{c}_n}_t$,  it follows from the definition that $\hat{c}_n\in \text{Supp}(A)$.  Therefore,  $E\subset \text{Supp}(A)$.  

If $p_2=0,p_3>0$, and $|p_4|<\infty$,  then $Y_t=0$ for all $0\leq t<\mathfrak{e}$,  $\bP_0$-a.s.,  where $\mathfrak{e}$ is the exponential holding time given in \S\ref{SEC321}.  Note that $\bP_0(\mathfrak{e}>0)=1$.  Hence, $\bP_0(R\geq \mathfrak{e}>0)=1$,  where $R=\inf\{t>0:A_t>0\}$.  By the definition of fine support,  it follows that $0\notin \text{Supp}(A)$.  Therefore,  $\text{Supp}(A)=E$. 

It remains to show $0\in \text{Supp}(A)$ for the remaining cases.  We proceed by contradiction. Suppose $0\notin \text{Supp}(A)$.  Then $\mathbf{P}_0(R>0)=1$.  For $\bP_0$-a.s.  $\omega\in \Omega$ and $0<t<R(\omega)$,  we have $L^{\hat{c}_n}_t(\omega)=0$ for all $n\in \mathbb{N}$.  According to \cite[V Theorem~3.8]{BG68},  $Y_t(\omega)\notin E$ for all $0<t<R(\omega)$.  Note that $Y_t(\omega)$ is c\`adl\`ag in $t$. From the pathwise representation of $Y$ (see \S\ref{SEC32}),  we can obtain that if $Y_{t-}(\omega)\neq Y_t(\omega)$,  then $Y_{t-}(\omega)=0$.  This fact, combined with $Y_t(\omega)\notin E$ for $0<t<R(\omega)$ and $Y_0(\omega)=0$,  implies that $Y_t(\omega)=0$ for all $0\leq t<R(\omega)$.  {\color{blue}In particular},  $Y_t(\omega)$ is continuous in $t\in [0,R(\omega))$.  This is impossible when $p_2>0, |p_4|<\infty$,  because before the first jumping time,  the sample paths of $Y$ are those of a symmetric Feller's Brownian motion with parameters $(p_1+|p_4|,p_2,p_3,0)$.  This process,  as a \emph{regular diffusion process} on $[0,\infty)$,  can not stay at any point for an extended period; see, e.g., \cite[V. (47.1)]{RW87}. 
For the case where $|p_4|=\infty$, the pathwise representation of $Y$ in \S\ref{SEC323} indicates that $Y_t(\omega)$ is exactly a Brownian path $W^+_t(\omega)$ (up to a transformation of time change \eqref{eq:321}) for $t\in [0,R(\omega))$.  This also contradicts $Y_t(\omega)=0$ for $t\in [0,R(\omega))$.    
\end{proof}

With the Feller's Brownian motion $Y$ and its PCAF $A=(A_t)_{t\geq 0}$ given by \eqref{eq:41},  we can now introduce the time-changed process of $Y$ with respect to $A$.  Define the right-continuous inverse of $A_t(\omega)$ for each $\omega\in \Omega$ as
\[
\gamma_t(\omega):=\left\lbrace
\begin{aligned}
	&\inf\{s: A_s(\omega)>t\}\quad &\text{for }t<A_{\zeta(\omega)-}(\omega), \\
	&\infty\quad &\text{for }t\geq A_{\zeta(\omega)-}(\omega).  
\end{aligned}
\right.
\]
Further, let 
\[
\check{Y}_t(\omega):=Y_{\gamma_t(\omega)}(\omega),\quad \check{\zeta}(\omega):=A_{\zeta(\omega)-}(\omega)(=A_{\zeta(\omega)}(\omega)),\quad t\geq 0,\omega\in \Omega.
\]
Note that $Y_t(\omega):=\partial$ for $\zeta(\omega)\leq t\leq \infty$,  so $\check{Y}_t(\omega)=\partial$ for $t\geq \check{\zeta}(\omega)$.  
According to \cite[Proposition~A.3.8(iv)]{CF12},  we may assume without loss of generality that $\check Y_{t}(\omega)\in \text{Supp}(A)\cup \{\partial\}$ for all $t\geq 0$ and all $\omega\in \Omega$.  Set $\check{\sG}_t:=\sG_{\gamma_t}$ and $\check{\theta}_t:=\theta_{\gamma_t}$.  It is well known that the time-changed process 
\begin{equation}\label{eq:49}
\check{Y}:=\left(\Omega, \sG, \check{\sG}_t,\check{Y}_t,\check{\theta}_t,\left(\mathbf{P}_x\right)_{x\in \text{Supp}(A)}\right)
\end{equation}
with lifetime $\check{\zeta}$ is a \emph{right process} on $\text{Supp}(A)$;  see, e.g., \cite[Theorem~A.3.11]{CF12}. 

\begin{theorem}\label{THM42}
Let $Y$ be a  Feller's Brownian motion with parameters $(p_1,p_2,p_3,p_4)$ as specified in Theorem~\ref{THM32}, and let
$\check{Y}$ be the time-changed process \eqref{eq:49} of $Y$ with respect to the PCAF \eqref{eq:41}.  Then $\Xi(\check{Y}):=\left(\Xi(\check{Y}_t)\right)_{t\geq 0}$ is a $Q$-process whose birth-death {\color{blue}$Q$-matrix} is \eqref{eq:11},  where $\Xi$ is defined as \eqref{eq:410} {\color{blue}and extended to $\overline{E}$}.  Furthermore,
\begin{itemize}
\item[(1)] If $p_2=0,p_3>0$, and $|p_4|=0$,  then $\Xi(\check{Y})$ is the minimal $Q$-process;
\item[(2)] If $p_2=0,p_3>0$, and $0<|p_4|<\infty$,  then $\Xi(\check{Y})$ is a Doob process;
\item[(3)] Otherwise,  $\Xi(\check{Y})$ is a Feller $Q$-process.
\end{itemize}
\end{theorem}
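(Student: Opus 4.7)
The plan is to verify that $\Xi(\check Y)$ is a continuous-time Markov chain whose transition matrix has derivative $Q$ at $t=0$, and then to classify which type of $Q$-process it is based on the four parameters of $Y$.

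First, by the general theory of time change of right processes (see \cite[Theorem~A.3.11]{CF12}), $\check Y$ is a right process on $\text{Supp}(A)\cup\{\partial\}$, so $\Xi(\check Y)$ inherits the strong Markov property on $\Xi(\text{Supp}(A))\cup\{\partial\}\subset \mathbb{N}\cup\{\infty,\partial\}$. Next, I would establish the skip-free property: from any $k\in \mathbb{N}$, transitions of $\Xi(\check Y)$ occur only to $k\pm 1$ (or to the boundary point $\infty$, if applicable). The reason is that on $(0,\infty)$ the paths of $Y$ coincide with those of killed Brownian motion and are continuous; hence between visits to $\{0\}$, $Y$ cannot skip over any point of $E$, and the time change only deletes the time spent off $\text{Supp}(A)$, preserving the order of visits to points in $E$.

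To identify the $Q$-matrix, fix $k\geq 1$ and observe that starting from $\hat c_k\in (\hat c_{k+1},\hat c_{k-1})$, the process $Y$ behaves as a standard Brownian motion until hitting $\{\hat c_{k-1},\hat c_{k+1}\}$. The scale function of Brownian motion then yields
\begin{equation*}
\mathbf{P}_{\hat c_k}(T_{\hat c_{k+1}} < T_{\hat c_{k-1}}) = \frac{\hat c_{k-1} - \hat c_k}{\hat c_{k-1} - \hat c_{k+1}} = \frac{c_k - c_{k-1}}{c_{k+1} - c_{k-1}},
\end{equation*}
which by \eqref{eq:231} equals $b_k/q_k$, matching the embedded jump probability to $k+1$ for the $Q$-matrix \eqref{eq:11} under the relabeling $\Xi$. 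For the holding rate at $\hat c_k$, I would use that $A$ accumulates at $\hat c_k$ at rate $\mu_k$ per unit of local time $L^{\hat c_k}$: a computation with the Revuz-measure normalization in Definition~\ref{DEF37} and the resolvent density \eqref{eq:310} of killed Brownian motion pins down the exit rate at $\hat c_k$ as $q_k=a_k+b_k$. Combined with the skip-free property, this forces the transition matrix of $\Xi(\check Y)$ to have derivative $Q$ at $t=0$.

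To classify the three cases, I would invoke Lemma~\ref{LM41}. In case (1), $0\notin \text{Supp}(A)$ and $A$ stops increasing when $Y$ reaches $0$, so $\check \zeta = A_{\tau_0}$ is a.s. finite and $\Xi(\check Y)$ is killed on its first approach to $\infty$; it is hence the minimal $Q$-process. In case (2), the pathwise description in \S\ref{SEC321} shows that at $0$ the process $Y$ waits for an exponential time and then jumps to $\partial$ or to a point of $(0,\infty)$ with law \eqref{eq:320}; the time change bypasses the waiting interval (which contributes zero to $A$), and $\Xi(\check Y)$ exhibits the instantaneous resurrection characteristic of a Doob process in the sense of Theorem~\ref{THMB1}(2). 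Otherwise, $0\in \text{Supp}(A)$ by Lemma~\ref{LM41}(2), so $\text{Supp}(A)=\overline E$ and $\Xi(\check Y)$ is neither minimal nor of Doob type, so by definition it is a Feller $Q$-process. The principal obstacle will be case (3) when $|p_4|=\infty$: the boundary point $0$ is visited on a nowhere-dense perfect set of excursion times, and the pathwise construction of \S\ref{SEC323} involves a subordinator-driven time change, making it delicate to verify directly that $\Xi(\check Y)$ extends to a genuine Feller process on $\mathbb{N}\cup\{\infty\}$; this step will likely require the approximation arguments hinted at in \S\ref{SEC7} rather than a direct pathwise computation.
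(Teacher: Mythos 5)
Your proof takes a genuinely different route from the paper's: you propose to identify the $Q$-matrix directly by computing the embedded jump probabilities (via the scale function) and the holding rates, whereas the paper first verifies that $\check{p}_{ij}(t)$ is a standard transition matrix by checking the resolvent conditions \eqref{eq:411} (notably the identity $\check{G}_\alpha 1_{\{0\}}(\hat c_i)=0$, which is needed for Chapman--Kolmogorov restricted to $\bN$), then identifies the killed process $\check{Y}^0$ with the time-changed killed Brownian motion and hence with the minimal $Q$-process, and finally uses the small-time estimate $\bP_{\hat c_i}(\check\eta_\infty\le t)=o(t)$ to pass from the killed process to $\check{Y}$ itself. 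Your scale-function computation is correct and does recover the embedded chain, and the classification mechanism you describe (using $\text{Supp}(A)$ from Lemma~\ref{LM41} and the pathwise behaviour at $0$) is the right one.

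However, there are genuine gaps that make the proposal incomplete as stated. First, ``a computation with the Revuz-measure normalization \dots pins down the exit rate at $\hat c_k$ as $q_k$'' is a placeholder, not an argument; carrying this out is essentially equivalent to establishing that the time change of killed Brownian motion by $A^0$ is the minimal $Q$-process, which the paper handles by appealing to \cite[Lemma~3.1]{L23}. Second, and more importantly, even granting the correct local jump rates and probabilities, you have not shown that the full transition matrix $\check p_{ij}(t)$ (not just the killed one) has derivative $q_{ij}$ at $t=0$. The local behaviour determines only $\check p^0_{ij}(t)$; to conclude $\check p_{ij}(t)-\check p^0_{ij}(t)=o(t)$ you need precisely the paper's estimate \eqref{eq:414} that the first flying time $\check\eta_\infty$ satisfies $\bP_{\hat c_i}(\check\eta_\infty\le t)=o(t)$, and this is nowhere established in your sketch. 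Third, the standardness of the transition matrix restricted to $\bN$ requires observing that $\check Y$ spends zero Lebesgue time at $0\in\overline E$ (since $\dd A$ does not charge $\{Y_t=0\}$), which your proposal omits. Finally, for the classification, asserting that case (3) is ``neither minimal nor Doob'' requires actually proving $\check Y_{\check\eta_\infty}=0$ a.s.\ (so the first flying point is the boundary, not a state in $E$); you correctly flag this as the delicate point when $|p_4|=\infty$ but do not resolve it, whereas the paper does so via the ``cannot remain at $0$'' argument from the last paragraph of the proof of Lemma~\ref{LM41}.
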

\begin{proof}
Denote the transition semigroup and resolvent of $\check{Y}$ by $(\check{T}_t)_{t\geq 0}$ and $(\check{G}_\alpha)_{\alpha>0}$, respectively.  Define
\[
	\check p_{ij}(t):=\check{T}_t1_{\{\hat c_j\}}(\hat{c}_i),\quad t\geq 0, i,j\in \mathbb{N},
\]
and 
\[
\check{\Psi}_{ij}(\alpha):=\int_0^\infty e^{-\alpha t}\check{p}_{ij}(t)\dd t,\quad \alpha>0,  i,j\in \bN.
\]
The goal is to prove that $(\check{p}_{ij}(t))_{i,j\in \mathbb{N}}$ is a $Q$-process.  

We first demonstrate that $(\check{p}_{ij}(t))_{i,j\in \mathbb{N}}$ is a standard transition matrix.  According to \cite[\S2.5, Theorem~1]{WY92},  it suffices to verify that $\check{\Psi}_{ij}(\alpha)$ satisfies the following conditions:
\begin{equation}\label{eq:411}
\begin{aligned}
&\check{\Psi}_{ij}(\alpha)\geq 0,\quad \alpha \sum_{j\in \bN}\check{\Psi}_{ij}(\alpha)\leq 1, \\
&\check{\Psi}_{ij}(\alpha)-\check{\Psi}_{ij}(\beta)+(\alpha-\beta)\sum_{k\in \bN}\check{\Psi}_{ik}(\alpha)\check{\Psi}_{kj}(\beta)=0,\quad \forall \alpha,\beta>0,\\
&\lim_{\alpha\rightarrow \infty}\alpha\check{\Psi}_{ij}(\alpha)=\delta_{ij},
\end{aligned}
\end{equation}
where $\delta_{ij}$ is the Kronecker delta.  To accomplish this,  note that 
\[
	\check{\Psi}_{ij}(\alpha)=\check{G}_\alpha 1_{\{\hat{c}_j\}}(\hat{c}_i).
\]
Thus,  the first condition in \eqref{eq:411} is straightforward, and the third condition follows from the right continuity of  $\check{Y}$.  To prove the second condition, it is sufficient to demonstrate
\begin{equation}\label{eq:412}
	\check{G}_\alpha 1_{\{0\}}(\hat{c}_i)=0
\end{equation}
and then apply the resolvent equation of $\check G_\alpha$.  In fact,  it follows from the definition of $\check{Y}$,  \cite[Proposition~4.9 of page 8]{RY99}, and the Fubini theorem that
\[
\begin{aligned}
	\check{G}_\alpha 1_{\{0\}}(\hat{c}_i)&=\mathbf{E}_{\hat{c}_i}\int_0^\infty e^{-\alpha A_{\gamma_t}}1_{\{0\}}(Y_{\gamma_t})\dd t =\mathbf{E}_{\hat{c}_i}\int_0^\infty e^{-\alpha A_{t}}1_{\{0\}}(Y_{t})\dd A_t \\
	&=\sum_{n\in \mathbb{N}} \mu_n\mathbf{E}_{\hat{c}_i}\int_0^\infty e^{-\alpha A_{t}}1_{\{0\}}(Y_{t})\dd L^{\hat{c}_n}_t=0.  
\end{aligned}\]
Thus, \eqref{eq:412} is established.  

Recall that $Y^0$, with lifetime $\zeta^0$, is a {\color{blue}killed} Brownian motion on $(0,\infty)$.  According to Lemma~\ref{LM36} and Lemma~\ref{LM41},  we have that
\[
	A^0_t:=A_{t\wedge \zeta^0},\quad t\geq 0
\]
is a PACF of $Y^0$,  with Revuz measure $\sum_{n\in \bN}\mu_n\delta_{\hat{c}_n}$.  Define 
\[
\hat{Y}^0=\left(\Omega, \sG,\hat{\sG}_t, \hat{Y}^0_t,\hat{\theta}^0_t, (\bP_x)_{x\in E}\right),
\] 
with lifetime $\hat{\zeta}^0:=A^0_{\zeta^0}=A_{\zeta^0}$, as the time-changed process of $Y^0$ with respect to the PACF $A^0$. Specifically,
\[
	\hat{\sG}_t:=\sG_{\gamma^0_t},\quad \hat{\theta}^0_t:=\theta^0_{\gamma^0_t},\quad \hat{Y}^0_t:=Y^0_{\gamma^0_t},
\]
with
\[
\gamma^0_t(\omega):=\left\lbrace
\begin{aligned}
	&\inf\{s: A^0_s(\omega)>t\}\quad &\text{for }t<\hat{\zeta}^0, \\
	&\infty\quad &\text{for }t\geq \hat{\zeta}^0.  
\end{aligned}
\right.
\]
{\color{blue}By mimicking the proof of \cite[Lemma~3.1]{L23}, it can be established} that $\Xi(\hat{Y}^0)$ is exactly the minimal $Q$-process.  

Next,  let us examine the killed process of $\check{Y}$ upon hitting $0\in \overline{E}$.  More precisely,  let $\check{\eta}_n:=\inf\{t>0:\check{Y}_t=\hat{c}_n\}$ for all $n\geq 1$.  From the definition \eqref{eq:49} of $\check{Y}$,  we find that for $\bP_{\hat{c}_i}$-a.s. $\omega\in \Omega$,  $\check{\eta}_n(\omega)$ is increasing in $n$ for $n> i$.  Hence,
\begin{equation}\label{eq:420}
	\check{\eta}_\infty(\omega):=\lim_{n\rightarrow \infty}\check{\eta}_n(\omega)
\end{equation}
is well defined for $\bP_{\hat{c}_i}$-a.s.  $\omega$ and all $i\in \bN$.  We will show that $\check{\eta}_\infty=A_{\zeta^0}=\hat{\zeta}^0$,  and hence the killed process
\[
	\check{Y}^0_t:=\left\lbrace
\begin{aligned}		
		&\check Y_t,\quad &t<\check{\zeta}\wedge \check{\eta}_\infty, \\
		&\partial,\quad &t\geq \check{\zeta}\wedge \check{\eta}_\infty,
\end{aligned}	\right.
\] 
with lifetime $\check{\zeta}^0:=\check{\zeta}\wedge \check{\eta}_\infty (=\check{\eta}_\infty)$,  on $E$  is identical  to $\hat{Y}^0$.  To prove this,  fix $\bP_{\hat{c}_i}$ and let $\eta_n:=\inf\{t>0:Y_t=\hat{c}_n\}$ for $n>i$.  Note that $Y_{\eta_n}=\hat{c}_n$ and $Y_t\neq \hat{c}_n$ for all $t<\eta_n$,  $\bP_{\hat{c}_i}$-a.s.  It follows from \cite[V. Theorem~3.8]{BG68} that $A_{\eta_n}=A_{\eta_n-\varepsilon_0}$ for some $\varepsilon_0>0$ and $A_{\eta_n+\varepsilon}>A_{\eta_n}$ for all $\varepsilon>0$.  Consequently,
\[
\gamma_{A_{\eta_n}}=\inf\{s>0:A_s>A_{\eta_n}\}=\eta_n,
\]
and
\[
\gamma_t\leq \eta_n-\varepsilon_0,\quad \forall t<A_{\eta_n}.
\]
These yield
\[
\check{Y}_{A_{\eta_n}}=Y_{\gamma_{A_{\eta_n}}}=Y_{\eta_n}=\hat{c}_n
\]
and 
\[
	\check{Y}_t=Y_{\gamma_t}\neq \hat{c}_n,\quad \forall t<A_{\eta_n}.
\]
In other words,  $\check{\eta}_n=A_{\eta_n}$, $\bP_{\hat{c}_i}$-a.s.  Noting that $\lim_{n\rightarrow \infty}\eta_n=\zeta^0$,  we obtain that $\check{\eta}_\infty=\lim_{n\rightarrow \infty}\check{\eta}_n=\lim_{n\rightarrow \infty}A_{\eta_n}=A_{\zeta^0}$.

According to the argument in the previous two paragraphs,  we can conclude that $\check{p}^0_{ij}(t):=\mathbf{P}_{\hat{c}_i}(\check{Y}_t=\hat c_j, t<\check{\eta}_\infty)$ is the transition matrix of the minimal $Q$-process.  This implies that
\begin{equation}\label{eq:413}
	\lim_{t\rightarrow 0}\frac{\check{p}^0_{ij}(t)-\delta_{ij}}{t}=q_{ij}.
\end{equation}
Mimicking the proof of \cite[Proposition~3.6]{L23},  we can also obtain that
\begin{equation}\label{eq:414}
\lim_{t\rightarrow 0}\frac{\mathbf{P}_{\hat c_i}(\check{\eta}_\infty\leq t)}{t}=0.
\end{equation}
Combining \eqref{eq:413} and \eqref{eq:414}, we obtain
\[
	\lim_{t\rightarrow 0}\frac{\check{p}_{ij}(t)-\delta_{ij}}{t}=q_{ij}.
\]
In other words,  $(\check{p}_{ij}(t))_{i,j\in \mathbb{N}}$ is a $Q$-process.  

Finally,  let us classify the $Q$-process $\Xi(\check{Y})$ for different cases.  
For the first case,  where $p_2=0,p_3>0$ and $|p_4|=0$,  we observe that $t\mapsto A_t$ does not increase after time $\zeta^0$ (according to the sample path representation in \S\ref{SEC321}).  Consequently, $\gamma_t=\infty$ for all $t\geq A_{\zeta^0}$,  which implies that $\hat{Y}_t=\partial$ for all $t\geq A_{\zeta^0}=\check{\eta}_\infty$.  Thus, $\Xi(\check{Y})$ aligns with the minimal $Q$-process.
In the second case, where $p_2=0,p_3>0$ and $0<|p_4|<\infty$,  $t\mapsto A_t$ may continue to increase after time $\zeta^0$.  A similar argument shows that $\Xi(\check{Y})$ is not the minimal $Q$-process.  However, according to Lemma~\ref{LM41}, $\check{Y}$ is a right process with state space $\text{Supp}(A)=E$.  Therefore, $\check{Y}_{\check{\eta}_\infty}\in E\cup \{\partial\}$,  $\bP_{\hat{c}_i}$-a.s.  for all $i\in \bN$.  Based on \cite[Corollary~5.2]{L23},  it follows that $\Xi(\check{Y})$ is a Doob process.

For the remaining cases,  it suffices to show that $\check{Y}_{\check{\eta}_\infty}=0$,  $\bP_{\hat{c}_i}$-a.s.  for all $i\in \bN$.  
To demonstrate this, consider
\[
\gamma_{\check{\eta}_\infty}=\inf\{t>0:A_t>\check{\eta}_\infty\}=\inf\{t>0:A_t>A_{\zeta^0}\}. 
\]
 If it were false that $\gamma_{\check{\eta}_\infty}=\zeta^0$,  then there would exist some $\varepsilon_0>0$ such that $A_{\zeta^0+\varepsilon_0}=A_{\zeta^0}$.   This implies that $Y_t\notin E$ for all $\zeta^0<t<\zeta^0+\varepsilon_0$ by \cite[V. Theorem~3.8]{BG68}.  Since $Y_{\zeta^0}=0$ and $Y$ is right continuous, it follows that  $Y_t=0$ for all $\zeta^0\leq t<\zeta^0+\varepsilon_0$.  However, this contradicts the previous argument that $Y$ cannot remain at $0$ for any extended period, as noted in the last paragraph of the proof of Lemma~\ref{LM41}.
Therefore,  we have $\check{Y}_{\check{\eta}_\infty}=Y_{\zeta^0}=0$. 
\end{proof}

In the context of  general Markov process,  the Feller's Brownian motion $Y$ with parameters $(p_1,p_2,p_3,p_4)$ is termed \emph{conservative} if $\bP_x(\zeta<\infty)=0$ for any $x\in [0,\infty)$, which is equivalent to $p_1=0$ (see \cite[\S15]{IM63}).  In terms of continuous-time Markov chains,  a conservative $Q$-process is also referred to as an \emph{honest} $Q$-process (see \S\ref{SEC22}).  

\begin{corollary}
If the Feller's Brownian motion $Y$ is conservative,  then for any $a>0$ and $x\in [0,\infty)$,
\begin{equation}\label{eq:418}
	\bP_x(L^a_\infty=\infty)=1.
\end{equation} 
{\color{blue}In particular},  the $Q$-process $\left(\Xi(\check{Y}_t)\right)_{t\geq 0}$, obtained in Theorem~\ref{THM42}, is also conservative.  
\end{corollary}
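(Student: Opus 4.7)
The plan is to first prove that a conservative Feller's Brownian motion $Y$ visits every fixed $a > 0$ at arbitrarily large times almost surely, and then use the inverse local time to upgrade this recurrence to $L^a_\infty = \infty$. The ``in particular'' claim will then be immediate from the structure of the PCAF $A$ in \eqref{eq:41}.

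For the recurrence step, I would start from the fact that $Y$ coincides with killed Brownian motion on $(0,\infty)$ up to $\tau_0$, so $\bP_x(\tau_0 < \infty) = 1$ for every $x > 0$. The conservativeness of $Y$ is equivalent to $p_1 = 0$ (recalled just before the corollary), and a direct inspection of the pathwise constructions in \S\ref{SEC321}--\S\ref{SEC323} shows that, under this assumption, the jumping measure $\lambda$ has no mass at $\partial$ and the multiplicative functional \eqref{eq:327} is identically $1$. Consequently, after each visit to $0$ the process $Y$ re-enters $(0,\infty)$ in finite time and, by iteration, produces infinitely many excursions from $0$. Each excursion has a uniformly positive probability of hitting $a$ --- either through the Brownian excursion measure at $0$ when $p_2 > 0$, or through standard Brownian hitting probabilities applied to the re-entry point governed by $\lambda$ when $|p_4| > 0$ --- and the excursions are independent by the strong Markov property at successive visits to $0$, so a Borel--Cantelli argument yields $\bP_x(\{t \geq 0 : Y_t = a\} \text{ is unbounded}) = 1$.

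To convert recurrence into $L^a_\infty = \infty$, I would use the right-continuous inverse $\tau_s := \inf\{t : L^a_t > s\}$. Under $\bP_a$, $\tau$ is a (possibly killed) subordinator by general excursion theory, and its killing rate coincides with the $\bP_a$-probability that $Y$ leaves $a$ forever. The unboundedness of $\{t : Y_t = a\}$ established above forces this killing rate to vanish, hence $\tau_s < \infty$ for every $s \geq 0$ $\bP_a$-a.s., equivalently $L^a_\infty = \infty$ $\bP_a$-a.s. For a general starting point $x$, applying the strong Markov property at $T_a := \inf\{t > 0 : Y_t = a\}$, which is finite $\bP_x$-a.s. by the first step, extends the conclusion. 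The main obstacle is the ``pathological'' case $|p_4| = \infty$, where the excursion structure in \eqref{eq:316} is very dense and the clean Borel--Cantelli argument must be replaced by an analysis based on the L\'evy measure of $Z$ and on the divergence $\ell^{Y^1}_{\mathfrak{f}^{-1}(t)} \to \infty$, which holds because $p_1 = 0$ makes \eqref{eq:327} trivial so that $Y$ coincides with $Y^2$ globally.

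The ``in particular'' assertion now follows at once: conservativeness of $Y$ gives $\zeta = \infty$, hence $\check\zeta = A_\infty$. By the definition \eqref{eq:41} of $A$, one has $A_\infty \geq \mu_n L^{\hat c_n}_\infty$ for any fixed $n$, and \eqref{eq:418} applied with $a = \hat c_n$ yields $A_\infty = \infty$ $\bP_x$-a.s. Therefore $\check Y$, and hence $\Xi(\check Y)$, has infinite lifetime, which is precisely the statement that the $Q$-process is conservative.
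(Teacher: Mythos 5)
Your overall architecture is sound and matches the paper's in one key respect: both hinge on first establishing that $Y$ started from any point (in particular from $0$) hits $a$ in finite time. But the way you get there, and the way you pass from hitting to $L^a_\infty=\infty$, both differ from the paper's argument, and one of the two differences introduces a real gap.

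For the hitting step, the paper avoids excursion theory entirely. It notes that when $p_1=0$ the pathwise representation \eqref{eq:316}--\eqref{eq:321} applies with the multiplicative functional \eqref{eq:327} trivial, so $Y_t=Y^1_{\mathfrak f^{-1}(t)}$ and, since $Z(Z^{-1}(s))\ge s$ always, $Y^1_s\ge W^+_s$. Hence $\sigma:=\inf\{t>0:Y_t\ge a\}$ satisfies $\{\sigma=\infty\}\subset\{W^+_t<a,\forall t\}$, which has probability zero because reflecting Brownian motion is recurrent; a strong-Markov step at $\sigma$ then gives $T_a<\infty$ a.s. This one-line pathwise domination covers the ``pathological'' regime $|p_4|=\infty$ without any further work. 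Your excursion/Borel--Cantelli argument works cleanly when $p_2=0$ and $0<|p_4|<\infty$ (a genuine, discrete sequence of excursions), and can be pushed through when $p_2>0, |p_4|<\infty$ (piecing out produces a countable sequence of ``phases''), but for $|p_4|=\infty$ you explicitly defer to ``an analysis based on the L\'evy measure of $Z$.'' That analysis is not supplied, and it is not obvious it works as stated: if $p_4$ concentrates near $0$, the subordinator $Z$ makes no large jumps, so looking at the L\'evy measure of $Z$ alone does not force $Y$ above level $a$. The much simpler observation $Y_t\ge W^+_{\mathfrak f^{-1}(t)}$ is what closes this case, and it is precisely the device the paper uses. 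As written, your proposal leaves the hardest case open.

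For the passage from $T_a<\infty$ to $L^a_\infty=\infty$, your subordinator argument is correct and genuinely different from the paper's. The paper computes the $\alpha$-potential of $L^a$ explicitly (by mimicking \eqref{eq:415}), obtains
\[
\bE_a\int_0^\infty e^{-\alpha t}\dd L^a_t=\frac{1}{\sqrt{2\alpha}}\,\frac{e^{\sqrt{2\alpha}a}-e^{-\sqrt{2\alpha}a}}{e^{\sqrt{2\alpha}a}-\bE_0e^{-\alpha T}},
\]
lets $\alpha\downarrow 0$ using \eqref{eq:416} to conclude the potential blows up, and then invokes \cite[V. Theorem~3.17]{BG68}. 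Your route --- identifying the inverse local time $\tau_s$ as a possibly killed subordinator under $\bP_a$ and showing $\bP_x(T_a<\infty)\equiv 1$ forces the killing rate to vanish --- is a valid and arguably more conceptual alternative; you do not need the explicit potential formula, only the hitting statement. Note, however, that you do not actually need the stronger claim that $\{t:Y_t=a\}$ is a.s.\ unbounded: the hitting statement $\bP_x(T_a<\infty)=1$ for all $x$ already forces every excursion from $a$ to return, which is what kills $\kappa$. Your ``In particular'' paragraph is essentially the same as what the paper leaves implicit and is fine.

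In short: your step~2 is a correct alternative to the paper's potential-theoretic argument, and your ``in particular'' step is fine; but your step~1 has a real gap in the case $|p_4|=\infty$, where the paper's elementary domination $Y_t\ge W^+_{\mathfrak f^{-1}(t)}$ (applicable precisely because $p_1=0$) is the cleanest way through and should replace the deferred L\'evy-measure analysis.
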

\begin{proof}
Write $T:=T_a:=\inf\{t>0:Y_t=a\}$.  We first show that
\begin{equation}\label{eq:416}
\mathbf{P}_x(T<\infty)=1.  
\end{equation}
To demonstrate this, let {\color{blue}$\sigma:=\inf\{t>0:Y_t\geq a\}\leq T$}.  Observe that
\[
 \{\sigma=\infty\}=\{Y_t<a,\forall t>0\}.
\]
The sample path of $Y$ is described by \eqref{eq:316} up to a time change transformation (see \eqref{eq:321}).  Given that $p_1=0$,  it follows that
\[
	\{\sigma=\infty\}\subset\{W^+_t<a,\forall t>0\}.
\]
Therefore,  $\bP_x(\sigma=\infty)\leq \bP_x(W^+_t<a,\forall t>0)=0$.  On the other hand,  by the strong Markov property, we have
\[
	\bP_x(T=\infty, \sigma<\infty)=\bE_x \left(\bP_{Y_\sigma}(T=\infty);\{\sigma<\infty\} \right).
\]
Since $Y_\sigma\geq a$,  $\bP_x$-a.s. on $\{\sigma<\infty\}$,  it follows that $\bP_{Y_\sigma}(T=\infty)=0$,  $\bP_x$-a.s. on $\{\sigma<\infty\}$.  Therefore,  $\bP_x(T=\infty, \sigma<\infty)=0$.  Combining this with $\bP_x(\sigma=\infty)=0$,  we can eventually derive \eqref{eq:416}.

Next,  by mimicking the computation in \eqref{eq:415},  we can show that for $\alpha>0$, 
\[
\bE_a\int_0^\infty e^{-\alpha t}\dd L^a_t=\frac{1}{\sqrt{2\alpha}}\frac{e^{\sqrt{2\alpha}a}-e^{-\sqrt{2\alpha}a}}{e^{\sqrt{2\alpha}a}-\bE_0e^{-\alpha T}}.
\]
It follows from \eqref{eq:416} (with $x=0$) that
\begin{equation}\label{eq:417}
	\lim_{\alpha \rightarrow 0}\bE_a\int_0^\infty e^{-\alpha t}\dd L^a_t=\infty.  
\end{equation}

Finally,  by using \eqref{eq:416} and \eqref{eq:417}, we can apply \cite[V. Theorem~3.17]{BG68} to conclude \eqref{eq:418}.
\end{proof}

\subsection{Uniqueness of PCAF for time change}

We continue to examine the time change transformation in Theorem~\ref{THM42}.  The goal is to demonstrate that \eqref{eq:41} is indeed the unique PCAF of $Y$ for which the corresponding time-changed process is a $Q$-process with the given {\color{blue}$Q$-matrix} \eqref{eq:11}.  

\begin{theorem}\label{THM44}
Let $Y$ be a Feller's Brownian motion and $A^1=(A^1_t)_{t\geq 0}$ be a PCAF of $Y$ such that $\text{Supp}(A^1)\subset \overline{E}$.  Denote by $\check{Y}^1$ the time-changed process of $Y$ with respect to the PCAF $A^1$.  If $X^1:=\Xi(\check{Y}^1)$ is a $Q$-process with the given {\color{blue}$Q$-matrix} \eqref{eq:11},  then 
\[
	A^1_t=A_t,\quad \forall t\geq 0,
\]
where $A=(A_t)_{t\geq 0}$ is defined as \eqref{eq:41}. 
\end{theorem}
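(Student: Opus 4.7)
My plan is to use a two-step reduction: first pin down $A^1$ on the excursions of $Y$ away from $0$ via the Revuz correspondence for the killed Brownian motion $Y^0$, and then rule out any increase of $A^1$ during $Y$'s visits to $0$ using the fact that the $Q$-process $X^1$ has state space $\mathbb{N}$ (no sojourn at $\infty$).

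For the first step, by an argument analogous to Lemma~\ref{LM36}, the stopped PCAF $A^{1,0}_t := A^1_{t \wedge \zeta^0}$ is a PCAF of $Y^0$ with fine support in $E$. Reproducing the computation of $\check{\eta}_n$ and $\check{\eta}_\infty$ from the proof of Theorem~\ref{THM42}, with $A$ replaced by $A^1$, I obtain that the killed process of $\check{Y}^1$ upon first hitting $0$ coincides with the time-changed process of $Y^0$ by $A^{1,0}$. Since $X^1$ is a $Q$-process with the given $Q$-matrix, its killed part upon first reaching $\infty$ is the minimal $Q$-process $X^{\text{min}}$; via $\Xi^{-1}$, this means the time change of $Y^0$ by $A^{1,0}$ is (up to the homeomorphism $\Xi$) exactly $X^{\text{min}}$. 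By the Revuz correspondence for the symmetric process $Y^0$, a PCAF of $Y^0$ is uniquely determined by its Revuz measure, and by \cite[Lemma~3.1]{L23} (as used in the proof of Theorem~\ref{THM42}) this Revuz measure must equal that of $A^0_t := A_{t \wedge \zeta^0}$, namely $\sum_{n \in \mathbb{N}} \mu_n \delta_{\hat{c}_n}$. Hence $A^{1,0}_t = A^0_t$ for all $t \geq 0$, so $A^1_t = A_t$ for $t \leq \zeta^0$.

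For the second step, I show that $A^1$ does not increase on $\{t : Y_t = 0\}$. If it did, then $0 \in \text{Supp}(A^1)$ and the time-changed process $\check{Y}^1$ would spend positive Lebesgue measure of time at $0 \in \overline{E}$, so $X^1 = \Xi(\check{Y}^1)$ would visit $\infty$ for positive duration, contradicting that $X^1$ is a $Q$-process on $\mathbb{N}$. By construction, $A$ also does not increase on $\{t : Y_t = 0\}$, since the Revuz measure of each $L^{\hat{c}_n}$ is $\delta_{\hat{c}_n}$, supported in $E$. Combined with the first step, this shows that $A^1$ and $A$ agree on the first excursion of $Y$ from $0$ and stay constant on visits to $0$; applying the strong Markov property at the successive excursion endpoints (or equivalently, observing that $A - A^1$ is a continuous additive functional whose Revuz measure vanishes both on $E$ and on $\{0\}$) yields $A^1_t = A_t$ for all $t \geq 0$.

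The principal technical obstacle is handling the case $|p_4| = \infty$, where the excursions of $Y$ from $0$ are not locally finite in time, so a naive induction over excursions does not apply; this is circumvented by working directly with Revuz measures rather than with individual excursion endpoints, and by exploiting the continuity of PCAFs together with the additivity property (A3) of Definition~\ref{DEF35}.
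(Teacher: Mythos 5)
Your Steps 1 and 2 are sound and in fact isolate the same two key facts the paper uses: (i) the Revuz correspondence for the killed Brownian motion $Y^0$ forces the ``coefficients'' of $A^1$ at the points $\hat c_n$ to be $\mu_n$, and (ii) the requirement that $X^1$ has state space $\mathbb N$ (no sojourn at $\infty$) rules out any growth of $A^1$ while $Y$ sits at $0$. The genuine gap is in the concluding step. You combine (i) and (ii) into the global identity $A^1\equiv A$ by either (a) iterating the strong Markov property over ``successive excursion endpoints'' --- which, as you yourself note, is not available when $|p_4|=\infty$ because the excursion intervals from $0$ are not discretely ordered --- or (b) asserting that the signed continuous additive functional $A-A^1$ has vanishing Revuz measure and hence is zero. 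Option (b) is not a valid shortcut as stated: the Revuz correspondence you have legitimately at your disposal is the one for the \emph{symmetric} process $Y^0$, and it only controls the stopped functional $A^1_{t\wedge\zeta^0}$; for the full process $Y$, which is typically non-symmetric, you would need a Revuz-type duality theory for signed CAFs, and you have not supplied the corresponding uniqueness statement. So you have local information on $[0,\tau_0]$ and the knowledge that $A^1$ is flat on $\{Y_t=0\}$, but you never establish a \emph{global} representation of $A^1$, and the pathwise patching of these facts does not automatically go through when excursions accumulate.

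The paper closes exactly this gap by reversing the order of the argument: it first invokes the representation theorem \cite[V\S3, Proposition~3.11]{BG68} (a Radon--Nikodym-type result for PCAFs of a general right process, not requiring symmetry) applied to $A^1$ and the reference PCAF $\bar A_t=A_t+\ell_t$ whose fine support is all of $\overline{E}$. This yields, globally and in a single stroke,
\[
A^1_t=\sum_{n\in\mathbb N}\tilde\mu_n L^{\hat c_n}_t+\tilde\mu_\infty\,\ell_t
\]
for nonnegative constants $\tilde\mu_n,\tilde\mu_\infty$. Only then are the two observations (i) and (ii) used, now merely to identify the scalar coefficients: (ii) forces $\tilde\mu_\infty=0$, and (i) forces $\tilde\mu_n=\mu_n$. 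To repair your proof, insert this representation step (or an equivalent one, e.g.\ decomposing $A^1$ into its restrictions $\int_0^t 1_{\{\hat c_n\}}(Y_s)\,dA^1_s$ and invoking the uniqueness of a PCAF with fine support a singleton, \cite[V. Theorem~3.13]{BG68}) before attempting the comparison with $A$; once $A^1$ is a known linear combination of local times, your Steps 1 and 2 pin the coefficients down correctly and uniformly in $|p_4|$.
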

\begin{proof}
Let $\ell=(\ell_t)_{t\geq 0}$ denote the local time of $Y$ at $0$,  as discussed in \cite{IM63} or \cite[V\S3]{BG68}.  (We will soon see that the normalization of this local time is not necessary for our proof.) Define
\[
	\bar{A}_t:=A_t+\ell_t,\quad t\geq 0.
\]
Clearly, $\bar{A}=(\bar{A}_t)_{t\geq 0}$ is a PCAF of $Y$.  To verify the condition in \cite[V\S3, Proposition~3.11]{BG68} for $A^1$ and $\bar{A}$,  consider $g\in \mathcal{B}_+([0,\infty))$ such that 
\[
	\bE_x\int_0^\infty g(Y_t)\dd\bar{A}_t=0,\quad \forall x\in [0,\infty).
\]
Then $g|_{\overline{E}}\equiv 0$.  Since $\text{Supp}(A^1)\subset \overline{E}$,  we have
 \[
	\bE_x\int_0^\infty g(Y_t)\dd A^1_t=0,\quad \forall x\in [0,\infty).
\]
Applying \cite[V\S3, Proposition~3.11]{BG68} to $A^1$ and $\bar{A}$,  we obtain that
\[
	A^1_t=\sum_{n\in \bN}\tilde{\mu}_nL^{\hat{c}_n}_t+\tilde{\mu}_\infty \ell_t,
\]
for some $\tilde{\mu}_n\geq 0$ and $\tilde{\mu}_\infty\geq 0$.  

The fact that $X^1$ is a $Q$-process implies that
\begin{equation}\label{eq:421}
	\bE_x\int_0^\infty e^{-\alpha t}1_{\{0\}}(\check{Y}^1_t)\dd t=0
\end{equation}
for $\alpha>0$ and $x\in \text{Supp}(A^1)$.  
However, the left-hand side of \eqref{eq:421} can be expressed as
\[
	\bE_x \int_0^\infty e^{-\alpha A^1_t}1_{\{0\}}(Y_t)\dd A^1_t=\tilde{\mu}_\infty\cdot  \bE_x\int_0^\infty e^{-\alpha A^1_t}\dd\ell_t.  
\]
Hence, we conclude that $\tilde{\mu}_\infty=0$.  

On the other hand,  it is straightforward (or follows from the proof of Theorem~\ref{THM42}) that the time-changed process of the {\color{blue}killed} Brownian motion $Y^0$, with lifetime $\zeta^0$, with respect to the PCAF $(A^1_{t\wedge \zeta^0})_{t\geq 0}$ corresponds precisely to the minimal $Q$-process.  {\color{blue}In particular}, the Revuz measure of 
\[
	A^1_{t\wedge \zeta^0}=\sum_{n\in \bN}\tilde{\mu}_n L^{\hat{c}_n}_{t\wedge \zeta^0}
\]
with respect to $Y^0$ is actually the speed measure $\mu$.  Therefore,  we can conclude that $\tilde{\mu}_n=\mu_n$ for all $n\in \bN$.  
\end{proof}

\section{Parameters of birth-death processes obtained by time change}


According to Theorem~\ref{THMB1},   the $Q$-process $\Xi(\check{Y})$ obtained in Theorem~\ref{THM42} (excluding the first case,  which yields the minimal $Q$-process) admits a resolvent representation given by a triple $(\gamma,\beta,\nu)$,  which is unique up to a multiplicative constant.  It is certainly interesting to explore how the parameters $(p_1,p_2,p_3,p_4)$ of Feller's Brownian motion determine $(\gamma,\beta,\nu)$.  

\subsection{Main result}

Recall that the sequence $\{\hat{c}_n:n\in \bN\}$ is given in \eqref{eq:419}.  Based on the measure $p_4$ on $(0,\infty)$,  we define a sequence $\{\mathfrak{p}_n: n\in \bN\}$ as follows: 
\begin{equation}\label{eq:55}
	\mathfrak{p}_0:=\int_{(\hat{c}_{1},\hat{c}_0]}\frac{x-\hat{c}_{1}}{\hat{c}_0-\hat{c}_{1}}p_4(\dd x)+p_4\left((\hat{c}_0,\infty)\right)
\end{equation}
and
\begin{equation}\label{eq:56}
	\mathfrak{p}_n:=\int_{(\hat{c}_{n+1},\hat{c}_n]}\frac{x-\hat{c}_{n+1}}{\hat{c}_n-\hat{c}_{n+1}}p_4(\dd x)+\int_{(\hat{c}_{n},\hat{c}_{n-1})}\frac{\hat{c}_{n-1}-x}{\hat{c}_{n-1}-\hat{c}_{n}}p_4(\dd x),\quad n\geq 1.
\end{equation}
Note that if $|p_4|<\infty$,  then $|p_4|=\sum_{n\in \bN}\mathfrak{p}_n$.  Our main result is as follows. 


\begin{theorem}\label{MainTHM}
Let $Y$ be a Feller's Brownian motion with parameters $(p_1,p_2,p_3,p_4)$ as described in Theorem~\ref{THM32} (excluding the first case,  i.e., $p_2=0,p_3>0$, and $|p_4|=0$, in Theorem~\ref{THM42}).  Then the parameters $(\gamma,\beta,\nu)$ that determine the resolvent matrix of the $Q$-process $\Xi(\check{Y})$, obtained in Theorem~\ref{THM42}, are given by
\[
	\gamma=p_1,\quad \beta=2p_2,\quad \nu_n=\mathfrak{p}_n,\; n\in \bN
\]
up to a multiplicative constant, 
where $\{\mathfrak{p}_n\}$ is  the sequence defined by \eqref{eq:55} and \eqref{eq:56}.
\end{theorem}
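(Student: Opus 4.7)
The plan is to identify the triple $(\gamma, \beta, \nu)$ by appealing to the uniqueness statement in Theorem~\ref{THMB1}: once the resolvent of $\Xi(\check{Y})$ is sufficiently explicit, the triple is forced up to a positive scalar. Following the pathwise constructions reviewed in \S\ref{SEC32}, I would split the argument into three cases of increasing difficulty according to $p_4$: (i) the symmetric case $|p_4|=0,\ p_2>0$; (ii) the finite jump case $0<|p_4|<\infty$; and (iii) the infinite jump case $|p_4|=\infty$. The parameter $p_3$ should be absent from $(\gamma, \beta, \nu)$ because the PCAF $A=\sum_n \mu_n L^{\hat{c}_n}$ places no mass at $\{0\}$, so the time change effectively bypasses the sojourn of $Y$ at $0$.

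In case (i), Corollary~\ref{COR34} identifies $Y$ as a symmetric process whose Dirichlet form is \eqref{eq:313}, while $A$ has Revuz measure $\mu_E := \sum_n \mu_n \delta_{\hat{c}_n}$. By the general trace theory for symmetric Dirichlet forms (see \cite{CF12, FOT11}), $\check{Y}$ is symmetric on $L^2(\overline{E}, \mu_E)$ with Dirichlet form equal to the trace of \eqref{eq:313} computed via harmonic extension. Since the bulk part of \eqref{eq:313} is simply $\tfrac12 \int f'g'\,dx$, the harmonic extension is piecewise linear between consecutive points of $\overline{E}$ and constant on $[\hat{c}_0,\infty)$, producing a fully explicit form on $\overline{E}$ with bulk contribution $\sum_{n\ge 0} (f(\hat c_n)-f(\hat c_{n+1}))^2/\bigl(2(\hat c_n-\hat c_{n+1})\bigr)$ and a boundary term $\tfrac{p_1}{2p_2} f(0)^2$. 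Matching with the Dirichlet form of the symmetric $(Q,\gamma,\beta,0)$-process on $\overline{E}$ described in \cite[\S3]{L23} identifies the ratio $\gamma/\beta = p_1/(2p_2)$, hence $\gamma=p_1$, $\beta=2p_2$ up to scaling, and trivially $\nu=0=\mathfrak{p}$.

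In case (ii), the constructions of \S\ref{SEC321}--\S\ref{SEC322} realize $Y$ as the Ikeda-Nagasawa-Watanabe piecing out (or its Doob variant in the $p_2=0$ subcase) of an underlying symmetric or Doob's Brownian motion $Y^1$ with parameters $(p_1+|p_4|, p_2, p_3, 0)$, with respect to the probability measure $\lambda$ on $(0,\infty)\cup\{\partial\}$ given by \eqref{eq:320}. The key observation is that time change by $A$ commutes with piecing out: $\check{Y}$ coincides with the piecing out of $\check{Y}^1$ with respect to the push-forward $\hat\lambda$ on $\overline{E}\cup\{\partial\}$, where $\hat\lambda$ is the first-hitting distribution of $\overline{E}$ under killed Brownian motion started with distribution $\lambda$. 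Since killed Brownian motion starting at $x\in (\hat{c}_{n+1}, \hat{c}_n)$ hits $\hat{c}_n$ before $\hat{c}_{n+1}$ with probability $(x-\hat{c}_{n+1})/(\hat{c}_n-\hat{c}_{n+1})$, and starting at $x>\hat{c}_0$ always hits $\hat{c}_0$, integrating these hitting probabilities against $\lambda$ recovers precisely $\hat\lambda(\{\hat c_n\}) = \mathfrak{p}_n/(p_1+|p_4|)$ and $\hat\lambda(\{\partial\}) = p_1/(p_1+|p_4|)$. Comparing the piecing-out resolvent formula in Remark~\ref{RM32} (cf.\ \eqref{eq:325}) with the $(Q,\gamma,\beta,\nu)$-resolvent from Theorem~\ref{THMB1} and using case (i) to identify the interior contribution of $\check{Y}^1$ then yields $\gamma=p_1$, $\beta=2p_2$, $\nu_n=\mathfrak{p}_n$.

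The principal obstacle is case (iii), $|p_4|=\infty$, where piecing out is unavailable and the boundary jumps accumulate from infinitely many sources. Here the plan is to adopt Wang's approximation strategy outlined in the introduction: set $p_4^{(m)} := p_4|_{(\varepsilon_m,\infty)}$ with $\varepsilon_m\downarrow 0$, each a finite measure, and let $Y^{(m)}$ be the Feller's Brownian motion with parameters $(p_1, p_2, p_3, p_4^{(m)})$. By case (ii), $\Xi(\check{Y}^{(m)})$ is a $Q$-process with parameters $(p_1, 2p_2, \mathfrak{p}^{(m)})$, where $\mathfrak{p}^{(m)}_n$ is defined through \eqref{eq:55}, \eqref{eq:56} with $p_4$ replaced by $p_4^{(m)}$. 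Using the explicit resolvent formula \eqref{eq:39} together with the integrability $\int (1\wedge x)\,p_4(dx)<\infty$, one obtains $G^{(m)}_\alpha h \to G_\alpha h$ for $h\in C_0([0,\infty))$, which transfers via the time change to convergence of the $Q$-process resolvents. Combined with the monotone convergence $\mathfrak{p}^{(m)}_n\uparrow \mathfrak{p}_n$ and verification of the summability condition \eqref{eq:B1} for $\mathfrak{p}$ (which uses the geometry $\hat c_n\downarrow 0$ together with $\int(1\wedge x)\,p_4(dx)<\infty$), the uniqueness in Theorem~\ref{THMB1} forces the limit parameters to be $(p_1, 2p_2, \mathfrak{p})$ up to a positive scalar. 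The technical heart of the argument is justifying that the triples $(\gamma^{(m)}, \beta^{(m)}, \nu^{(m)})$ pass to the limit inside the $(Q,\gamma,\beta,\nu)$-resolvent representation; this is where the separate approximation apparatus for Feller's Brownian motions developed later in the paper plays its essential role.
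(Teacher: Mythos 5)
Your cases (i) and (ii) align with the paper's treatment in \S\ref{SEC6}: the symmetric case via trace Dirichlet forms (Theorem~\ref{THM52}) and the finite-jump case via the compatibility of time change with piecing out, using one-dimensional hitting probabilities to compute the transferred instantaneous distribution. These parts are correct.

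Case (iii) is where you diverge from the paper, and there is a genuine gap. You truncate $p_4$ to $p_4^{(m)} := p_4|_{(\varepsilon_m,\infty)}$ and assert that the convergence $G^{(m)}_\alpha h \to G_\alpha h$ ``transfers via the time change to convergence of the $Q$-process resolvents.'' This is precisely the hard step, and it does not follow from resolvent convergence of $Y^{(m)}$ to $Y$. The time-changed resolvent $\check{G}^{(m)}_\alpha f(\hat{c}_i) = \mathbf{E}^{(m)}_{\hat{c}_i}\int_0^\infty e^{-\alpha A^{(m)}_t} f(Y^{(m)}_t)\,\dd A^{(m)}_t$ depends on the PCAF $A^{(m)}=\sum_n \mu_n L^{\hat{c}_n,(m)}$, and local-time functionals are not continuous under weak or resolvent convergence; the $Y^{(m)}$ also live on a priori different probability spaces. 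To close this you would need either an explicit coupling of the $Y^{(m)}$ (e.g.\ through a shared reflecting Brownian path and coupled subordinators $Z^{(m)}$ in \eqref{eq:316}) with pathwise convergence of local times, or a separate argument comparing the two resolvent matrices directly --- neither of which you supply. Note also that when $p_2=p_3=0$, the truncated parameters $(p_1,0,0,p_4^{(m)})$ violate \eqref{eq:38}, so $Y^{(m)}$ is only a Doob's Brownian motion (not Feller), and your case (ii) apparatus does not apply directly to it.

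The paper avoids this difficulty by a structurally different approximation. Instead of truncating $p_4$, it constructs the approximating Doob's Brownian motions $Y^{(n)}$ \emph{from the path of $Y$ itself} by cutting excursions below $\hat{c}_n$ (the $C(\eta^{(n)}_m,\sigma^{(n)}_m)$-transformation in \S\ref{SEC7}), so $Y^{(n)}$ and $Y$ share one probability space and $Y^{(n)}_t \to Y_t$ pathwise (Theorem~\ref{THM62}). Simultaneously it constructs Doob $Q$-processes $\hat{X}^{(n)}$ from $\hat{X}=\check{Y}$ by the analogous excursion-cutting (Wang's method, Theorem~\ref{THMA4}), again with pathwise convergence. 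The two approximating sequences are then tied together through hitting probabilities (Lemma~\ref{LM81}), and the limit resolvent is read off from the explicit $(Q,\gamma,\beta,\nu)$-resolvent matrix together with the separate identification of $(p_1,p_2,p_4)$ from the measure $\lambda$ (Theorem~\ref{THM65}). Finally, the paper isolates $p_3>0$ with $|p_4|=\infty$ as a separate step (\S\ref{SEC82}), reducing it to $p_3=0$ by observing that the $p_3>0$ process is itself a time change of the $p_3=0$ process --- your remark that ``the time change bypasses the sojourn'' is the right heuristic, but it needs the precise reduction via Lemma~\ref{LM82} to become a proof.
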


The proof of Theorem~\ref{MainTHM} will be completed in the the following three sections.  Here, we present a consequence: not only is every time-changed Feller's Brownian motion a $Q$-process, as shown in Theorem~\ref{THM42}, but also every $Q$-process can be derived from a Feller's Brownian motion through time change.

\begin{corollary}\label{COR66}
For every $Q$-process $X$,  there exists a Feller's Brownian motion $Y$ such that the $Q$-process obtained from $Y$ through time change, as described in Theorem~\ref{THM42}, is identical in law to $X$.
\end{corollary}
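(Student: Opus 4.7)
The approach is to invert the parameter correspondence of Theorem~\ref{MainTHM}, reading off $(p_1,p_2,p_3,p_4)$ from the data of $X$. If $X$ is the minimal $Q$-process, take $Y$ to be any Feller's Brownian motion with $p_2 = 0$, $p_3 > 0$, $|p_4| = 0$ (e.g., $(p_1,p_2,p_3,p_4) = (0,0,1,0)$); case~(1) of Theorem~\ref{THM42} then identifies $\Xi(\check Y)$ with the minimal $Q$-process.

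Otherwise, by Theorem~\ref{THMB1}, $X$ is encoded, uniquely up to a positive multiple, by a triple $(\gamma,\beta,\nu)$ satisfying \eqref{eq:B1} and \eqref{eq:B4}. I propose
\[
	p_4 := \sum_{n \in \bN} \nu_n\,\delta_{\hat c_n},
\]
the discrete measure on $E \subset (0,\infty)$. A direct evaluation of \eqref{eq:55} and \eqref{eq:56} with this choice yields $\mathfrak p_n = \nu_n$ for every $n$: in \eqref{eq:56} the half-open interval $(\hat c_{n+1},\hat c_n]$ contains only the atom $\hat c_n$, where the linear weight evaluates to $1$, while the open interval $(\hat c_n,\hat c_{n-1})$ contains none of the atoms of $p_4$; the analogous check works for \eqref{eq:55}. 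For admissibility of $p_4$, the elementary bound $\sum_{j \geq k}(c_{j+1}-c_j)\sum_{i=0}^j \mu_i \geq \hat c_k$ together with \eqref{eq:B1} gives $\sum_n \nu_n \hat c_n < \infty$, whence $\int_{(0,\infty)}(1 \wedge x)\,p_4(\dd x) < \infty$.

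Set $p_1 := \gamma$ and $p_2 := \beta/2$, and then select $p_3 \geq 0$ together with a common positive rescaling of $(p_1,p_2,p_3,p_4)$ so that both \eqref{eq:330} and \eqref{eq:38} are met. Such a rescaling is available because $(\gamma,\beta,\nu)$ is only defined up to a positive multiple in Theorem~\ref{THMB1}, and it rescales $\mathfrak p_n = \nu_n$ in exactly the same way. In the Doob sub-case $\beta = 0$ and $0 < |\nu| < \infty$, one must choose $p_3 > 0$ in order to respect \eqref{eq:38}; in the remaining sub-cases $p_3 = 0$ is already permissible. Invoking Theorem~\ref{THM32}, let $Y$ be the Feller's Brownian motion associated with these parameters. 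Applying Theorem~\ref{MainTHM} to $Y$, the resolvent triple of $\Xi(\check Y)$ equals $(p_1, 2p_2, \mathfrak p)$, which coincides with $(\gamma,\beta,\nu)$ up to the common positive constant, and the uniqueness clause of Theorem~\ref{THMB1} then identifies $\Xi(\check Y)$ with $X$ in law.

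The main obstacle I anticipate is the bookkeeping of \eqref{eq:330} and \eqref{eq:38} in the Doob sub-case $\beta = 0,\,0<|\nu|<\infty$: here we must inject a positive sojourn parameter $p_3$ --- invisible in the image triple $(p_1,2p_2,\mathfrak p)$ --- while simultaneously ensuring that $Y$ is a genuine Feller's Brownian motion and not the excluded Doob's Brownian motion of Remark~\ref{RM32}. The rest of the argument is a direct verification, using that the choice $p_4 = \sum_n \nu_n \delta_{\hat c_n}$ places a single atom in each of the intervals appearing in \eqref{eq:55}--\eqref{eq:56}.
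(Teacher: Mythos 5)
Your proposal is correct and follows the same approach as the paper's proof: in all three cases (minimal, Doob, non-minimal-non-Doob) you make the identical choices $p_1=\gamma$, $p_2=\beta/2$, $p_4=\sum_n\nu_n\delta_{\hat c_n}$, with $p_3>0$ injected precisely when $\beta=0$ to respect \eqref{eq:38}, and then invoke Theorems~\ref{THM42}, \ref{MainTHM}, and the uniqueness clause of Theorem~\ref{THMB1}. The only difference is that you spell out the verification that $\mathfrak p_n=\nu_n$ from \eqref{eq:55}--\eqref{eq:56} and that \eqref{eq:B1} implies $\int(1\wedge x)\,p_4(\dd x)<\infty$, details the paper leaves implicit.
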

\begin{proof}
Let $(\gamma,\beta,\nu)$ be the triple determining the resolvent matrix of $X$.  
When $X$ is the minimal $Q$-process, {\color{blue}i.e., $\beta=0$ and $\nu\equiv 0$,}  we can choose $Y$ with $p_2=0,p_3>0$ and $|p_4|=0$.   In this case, the first case of Theorem~\ref{THM42} applies.  When $X$ is a Doob process, i.e., $\beta=0,|\nu|<\infty$, we can choose
\[
	p_1=\gamma,\quad p_2=0,\quad p_3>0,\quad  p_4=\sum_{n\in \bN}\nu_n\delta_{\hat{c}_n}.
\]
Then, we apply the second case of Theorem~\ref{THM42} and Theorem~\ref{MainTHM}. 
When $X$ is non-minimal and non-Doob,  we can choose
\[
	p_1=\gamma,\quad p_2=\frac{\beta}{2},\quad p_3=0,\quad  p_4=\sum_{n\in \bN}\nu_n\delta_{\hat{c}_n}.
\]
(To satisfy \eqref{eq:330}, it may be necessary to multiply all parameters by a positive constant.) 
\end{proof}

\subsection{Pathwise construction of $Q$-processes}\label{SEC52}

In this subsection, we discuss the pathwise construction of a Feller $Q$-process $X$ based on Theorem~\ref{MainTHM}. For simplicity, we focus on the honest case where $\gamma = 0$,  and consider the Feller process $\hat{X}=\Xi^{-1}(X)$ on $\overline{E}$ with parameters $(\nu,\beta, 0)$. 

In the context of $Q$-processes, the $(Q,1)$-process plays a role analogous to that of the reflecting Brownian motion in the context of Feller's Brownian motion. We denote its corresponding $Q$-process on $\overline{E}$ by $\hat W^+$, {\color{blue}which is,  in fact,  a time-changed reflecting Brownian motion. More precisely,  let $W^+$ be the reflecting Brownian motion on $[0,\infty)$, and let $L^{+, \hat c_n}$ denote the local time of $W^+$ at $\hat c_n$, as defined in Definition~\ref{DEF37}. Then $\hat W^+$ is the time-changed process of $W^+$ on $\overline{E}$ induced by the PCAF $A^+_t := \sum_{n \in \mathbb{N}} \mu_n L^{+, \hat c_n}_t$.}  Let $\gamma^+$ be the right-continuous inverse of $A^+$. The local time of $\hat W^+$ at $0$, denoted by $\hat \ell$, can be derived from the Brownian local time $\ell$ through the corresponding time change transformation, specifically, $\hat{\ell}_t=\ell_{\gamma^+_t}$.  

We initially considered replacing $W^+$ with $\hat W^+$ and constructing the general $Q$-process similarly to \eqref{eq:316}, by defining 
\begin{equation}\label{eq:431}
\hat X^1_t := Z(Z^{-1}(\hat \ell_t)) - \hat \ell_t + \hat W^+_t,\quad t\geq 0,
\end{equation}
where $Z$ is the subordinator \eqref{eq:319} with $p_2=\frac{\beta}{2}$ and $p_4=\sum_{n\in \bN}\nu_n\delta_{\hat{c}_n}$,  which is independent of  $\hat W^+$.  (A similar idea appears in \cite[\S20]{IM63}.) However, at a time $t$ when $\hat \ell_t$ increases to a discontinuous point $x$ ($=\hat{\ell}_t$) of $Z(Z^{-1})$,  where $h(x) := Z(Z^{-1}(x)) - x=l$ for some $0 < l \in E$ and $h(x-)=0$, the term $h(\hat \ell_t)$ in \eqref{eq:431} will continuously decrease from $l$ for a short period thereafter (see \cite[\S12]{IM63}).  Meanwhile, $\hat W^+$ `diffuses’ within the space $\overline E$ near $0$. This discrepancy causes the process defined by  \eqref{eq:431} to move outside of $\overline{E}$.

Note that \eqref{eq:431} can be rewritten as
\[
\hat X^1_t= Z(Z^{-1}(\ell_{\gamma^+_t})) - \ell_{\gamma^+_t} +  W^+_{\gamma^+_t},\quad t\geq 0.
\]
However, according to \eqref{eq:316} and Theorem~\ref{MainTHM},  the correct pathwise representation of $\hat{X}$ should be
\[
	\hat X_t=Z(Z^{-1}(\ell_{\gamma_t})) -  \ell_{\gamma_t} +  W^+_{\gamma_t},\quad t\geq 0,
\]
where $\gamma$ is the right-continuous inverse of \eqref{eq:41}.  In other words,  \eqref{eq:431} incorrectly reverses the order of adding jumps using $Z$ and the transformation of time change.

\section{Proof of Theorem~\ref{MainTHM} for $|p_4|<\infty$}\label{SEC6}

In this section,  we will prove Theorem~\ref{MainTHM} for various cases where $|p_4|<\infty$.  Simultaneously,  we will provide a more comprehensive characterization of the corresponding $Q$-processes.  From now on,   we will denote $X_t:=\Xi(\check{Y}_t)$ for $t\geq 0$ and use $X:=(X_t)_{t\geq 0}$ for convenience. 

\subsection{Doob processes}

According to Theorem~\ref{THM42},  $X$ is a Doob process (but not the minimal $Q$-process) if and only if $p_2=0,p_3>0$, and $0<|p_4|<\infty$.  

\begin{theorem}
If $p_2=0,p_3>0$ and $0<|p_4|<\infty$,  then the parameters $(\gamma,\beta,\nu)$ that determine the resolvent matrix of the Doob process $X=\Xi(\check{Y})$ are given by 
\[
	\gamma=p_1,\quad \beta=0,\quad \nu_n=\mathfrak{p}_n,\quad n\in \bN.
\]
{\color{blue}In particular},  the instantaneous distribution of $X$ is
\[
	\pi(\{\partial\})=\frac{p_1}{p_1+|p_4|},\quad \pi(\{n\})=\frac{\mathfrak{p}_n}{p_1+|p_4|},\; n\in \bN.
\]
\end{theorem}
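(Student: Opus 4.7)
The plan is to combine the pathwise representation of $Y$ from \S\ref{SEC321} with Theorem~\ref{THM42}(2), which already identifies $X=\Xi(\check{Y})$ as a Doob process. Every Doob $Q$-process is obtained by piecing out the minimal $Q$-process using an instantaneous return distribution $\pi$ on $\bN\cup\{\partial\}$, and its resolvent parameters $(\gamma,0,\nu)$ satisfy
\[
\pi(\{\partial\})=\frac{\gamma}{\gamma+|\nu|},\qquad \pi(\{n\})=\frac{\nu_n}{\gamma+|\nu|},\quad n\in\bN.
\]
Thus it suffices to compute $\pi$ explicitly from the pathwise construction and then match coefficients; that $\beta=0$ holds automatically in the Doob setting.

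By Lemma~\ref{LM41}(1) we have $\text{Supp}(A)=E$, so $A_t$ remains constant on every time interval during which $Y_t\notin E$, in particular throughout the exponential sojourn at $0$ and throughout any subsequent Brownian excursion into $(0,\infty)\setminus E$. Starting from $0$ as described in \S\ref{SEC321}, $Y$ first waits an exponential time $\mathfrak{e}$ at $0$ and then either is killed (with probability $p_1/(p_1+|p_4|)$) or jumps to a random point $x\in(0,\infty)$ with distribution $p_4/(p_1+|p_4|)$, after which it continues as a reflecting Brownian motion. Because the right-continuous inverse $\gamma_t$ skips the initial flat stretch of $A$, the right-continuity of $\check{Y}$ forces $\check{Y}_0=\partial$ on the killing event and $\check{Y}_0=Y_{T^E}$ otherwise, where $T^E$ is the first time $Y$ reaches $E$ after the jump.

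The distribution of $Y_{T^E}$ comes from the classical Brownian gambler's-ruin formula. For $x\in(\hat{c}_{n+1},\hat{c}_n)$ with $n\geq 1$, Brownian motion started at $x$ exits $[\hat{c}_{n+1},\hat{c}_n]\subset(0,\infty)$ before touching $0$, hitting $\hat{c}_n$ with probability $(x-\hat{c}_{n+1})/(\hat{c}_n-\hat{c}_{n+1})$ and $\hat{c}_{n+1}$ otherwise; the same formula applies on $(\hat{c}_1,\hat{c}_0)$, while for $x>\hat{c}_0$ the first element of $E$ reached is deterministically $\hat{c}_0$. Integrating these conditional probabilities against $p_4/(p_1+|p_4|)$ and regrouping the contributions by the target atom $\hat{c}_n$ produces exactly
\[
\pi(\{n\})=\frac{\mathfrak{p}_n}{p_1+|p_4|},\qquad \pi(\{\partial\})=\frac{p_1}{p_1+|p_4|},
\]
with $\mathfrak{p}_n$ defined by \eqref{eq:55}--\eqref{eq:56}. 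Comparing with the general formula above then yields $\gamma=p_1$, $\beta=0$ and $\nu_n=\mathfrak{p}_n$, up to a common positive multiplier.

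The main obstacle I anticipate is rigorously justifying the identification $\check{Y}_0=Y_{T^E}$ under $\bP_0$: it requires handling the two consecutive flat stretches of $A$ (the sojourn at $0$, followed by a possible Brownian excursion inside $(0,\infty)\setminus E$) via the jump structure of $\gamma_t$, very much in the spirit of the final paragraph of the proof of Theorem~\ref{THM42}. Once this bookkeeping is in place, the remainder is a routine Brownian hitting-probability calculation and an algebraic rearrangement.
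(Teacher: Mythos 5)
The proposal is correct and follows essentially the same route as the paper: identify the instantaneous return distribution $\pi$ from the pathwise construction of \S\ref{SEC321}, compute the Brownian gambler's-ruin hitting probabilities on $E$, and integrate against $\lambda=p_4/(p_1+|p_4|)$. The only cosmetic difference is that the paper works under $\bP_{\hat{c}_i}$ and computes the law of $\check{Y}_{\check\eta_\infty}$ (identifying $\gamma_{\check\eta_\infty}=\sigma_2$ and then applying the strong Markov property at $\sigma_1$), whereas you work under $\bP_0$ and compute the law of $Y_{T^E}$ --- the two are equivalent by the strong Markov property at $\zeta^0$, and the paper's formulation avoids the technical awkwardness you flag (that $0\notin\text{Supp}(A)$) by keeping everything inside the state space of $\check{Y}$ and appealing directly to the flying-time characterization of Doob processes from \cite{L23}.
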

\begin{proof}
The fact that $\beta=0$ follows directly from Theorems \ref{THMB1} and \ref{THM42}.  Fix $\bP_{\hat{c}_i}$ for some $i\in \bN$, and let $\check{\eta}_\infty$ be defined as \eqref{eq:420},  representing the first flying time (see \cite[Corollary~4.7]{L23}) of $\check{Y}$.  According to \cite[Theorem~5.1]{L23},  it suffices to demonstrate that
\begin{equation}\label{eq:51}
\bP_{\hat c_i}(\check{Y}_{\check{\eta}_\infty}=\partial)=\frac{p_1}{p_1+|p_4|},\quad \bP_{\hat c_i}(\check{Y}_{\check{\eta}_\infty}=\hat{c}_n)=\frac{\mathfrak{p}_n}{p_1+|p_4|},\quad n\in \bN.
\end{equation}
We will use the same symbols as those in the proof of Theorem~\ref{THM42}.  
Define
\[
\sigma_1:=\inf\{t>\zeta^0:Y_t\neq 0\},\quad \sigma_2:=\inf\{t>\sigma_1: Y_t\in E\},
\]
and 
\[
	\sigma:=\inf\{t>0:Y_t\in E\}.
\]
Note that $\check{\eta}_\infty=A_{\zeta^0}$. 
From the pathwise representation of $Y$ in \S\ref{SEC321},  it follows that
\[
	\gamma_{\check{\eta}_\infty}=\inf\{t>0:A_t>A_{\zeta^0}\}=\sigma_2.  
\]
It is straightforward to verify that $Y_{\sigma_2}=Y_{\sigma}\circ \theta_{\sigma_1}$ and that $\{Y_{\sigma_2}=\hat{c}_n\}\subset \{Y_{\sigma_1}\in (\hat{c}_{n+1},\hat{c}_{n-1})\}$ (with the convention $\hat{c}_{-1}:=\infty$).   
By the strong Markov property of $Y$,  we have
\[
\begin{aligned}
	\bP_{\hat c_i}(\check{Y}_{\check{\eta}_\infty}=\hat{c}_n)&=\bP_{\hat{c}_i}(Y_{\sigma_2}=\hat{c}_n)=\bP_{\hat{c}_i}\left(Y_\sigma\circ \theta_{\sigma_1}=\hat{c}_n, Y_{\sigma_1}\in (\hat{c}_{n+1},\hat{c}_{n-1})\right) \\
	&=\bP_{\hat{c}_i}\left(\bP_{Y_{\sigma_1}}(Y_\sigma=\hat{c}_n); Y_{\sigma_1}\in (\hat{c}_{n+1},\hat{c}_{n-1})  \right).
\end{aligned}\]
Note that $\bP_{\hat{c}_i}\left(Y_{\sigma_1}\in \dd x \right)=\lambda(\dd x)$,  where $\lambda$ is given by \eqref{eq:320}.  For $x\in (\hat{c}_{n+1},\hat{c}_{n-1})$, we have
\[
\bP_x(Y_\sigma=\hat{c}_n)=\left\lbrace
	\begin{aligned}
	&\frac{x-\hat{c}_{n+1}}{\hat{c}_n-\hat{c}_{n+1}}, \quad & x\in (\hat c_{n+1},\hat{c}_n], \\
	&\frac{\hat{c}_{n-1}-x}{\hat{c}_{n-1}-\hat{c}_{n}}, \quad  & x\in (\hat{c}_n,\hat{c}_{n-1}),
	\end{aligned}
\right.
\]
with the convention $\frac{\infty}{\infty}:=1$.  Thus,  a straightforward computation yields
\[
	\bP_{\hat c_i}(\check{Y}_{\check{\eta}_\infty}=\hat c_n)=\int_{(\hat{c}_{n+1},\hat{c}_{n-1})} \bP_x(Y_\sigma=\hat{c}_n)\lambda(\dd x)=\frac{\mathfrak{p}_n}{p_1+|p_4|}.  
\]
Finally,  $\bP_{\hat c_i}(\check{Y}_{\check{\eta}_\infty}=\partial)=1-\sum_{n\in \bN}\bP_{\hat c_i}(\check{Y}_{\check{\eta}_\infty}=\hat c_n)=\frac{p_1}{p_1+|p_4|}$.  Therefore, \eqref{eq:51} is established.  This completes the proof.
\end{proof}

\subsection{Symmetric case}

The symmetric case,  where $p_2>0$ and $|p_4|=0$,  can be analyzed using Dirichlet form theory. In this framework, the time change of a Markov process corresponds to a trace Dirichlet form (see \cite[Chapter 5]{CF12}). 

To state our results,  we first define the following quadratic form for a function $f$ on $\bN$:
\[
	\sA(f,f):=\sum_{k\in \bN}\frac{(f(k+1)-f(k))^2}{c_{k+1}-c_k}.  
\]
If $\sA(f,f)<\infty$,  it follows from $c_\infty=\lim_{k\rightarrow \infty}c_k<\infty$ and the Cauchy-Schwarz inequality that $f(\infty):=\lim_{k\rightarrow \infty}f(k)$ exists.  

\begin{theorem}\label{THM52}
If $p_2>0$ and $|p_4|=0$,  then the parameters $(\gamma,\beta,\nu)$ determining the resolvent matrix of the $Q$-process $X=\Xi(\check{Y})$ are 
\begin{equation}\label{eq:54}
	\gamma=p_1,\quad \beta=2p_2,\quad \nu_n=0,\; n\in \bN.
\end{equation}
Furthermore,  $X$ is symmetric with respect to the speed measure $\mu$,  and the associated Dirichlet form on $L^2(\bN\cup\{\infty\},\mu)$ is given by
\[
	\begin{aligned}
		&\check{\sF}=\{f\in L^2(\bN\cup \{\infty\},\mu): \sA(f,f)<\infty\} \\
		&\check{\sE}(f,g)=\frac{1}{2}\sum_{k\in \bN}\frac{(f(k+1)-f(k))(g(k+1)-g(k))}{c_{k+1}-c_k}+\frac{p_1}{2p_2}f(\infty)g(\infty),\quad f,g\in \check{\sF}.
	\end{aligned}
\]
\end{theorem}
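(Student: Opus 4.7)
My plan is to exploit the symmetry provided by Corollary~\ref{COR34} and apply the theory of trace Dirichlet forms. Since $p_2>0$ and $|p_4|=0$, Corollary~\ref{COR34} ensures that $Y$ is symmetric with respect to the measure $m(\dd x)=\frac{p_3}{2p_2}\delta_0(\dd x)+1_{(0,\infty)}(x)\dd x$, with Dirichlet form $(\sE,\sF)$ on $L^2([0,\infty),m)$ as in \eqref{eq:313}. The key observation is that the time-changed process $\check Y$ on $\overline E$ ought to be precisely described by the trace of $(\sE,\sF)$ on $\overline E$, from which the parameters $(\gamma,\beta,\nu)$ can be read off directly.

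The first step is to identify the Revuz measure of $A=\sum_{n\in \bN}\mu_n L^{\hat c_n}$ with respect to $(Y,m)$. By the remark following Definition~\ref{DEF37} (which invokes \cite[Proposition~4.1.10]{CF12}), the Revuz measure of each $L^{\hat c_n}$ is $\delta_{\hat c_n}$, so the Revuz measure of $A$ is $\hat\mu:=\sum_{n\in \bN}\mu_n\delta_{\hat c_n}$, which corresponds to the speed measure $\mu$ on $\bN$ under $\Xi$. Combined with $\text{Supp}(A)=\overline E$ from Lemma~\ref{LM41}, the trace theorem from \cite[Chapter~5]{CF12} yields that $\check Y$ is $\hat\mu$-symmetric on $\overline E$, and its Dirichlet form $(\check\sE,\check\sF)$ coincides with the trace of $(\sE,\sF)$ on $\overline E$.

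Next I would compute this trace form explicitly via harmonic extensions. A function $\hat f$ on $\overline E$ extends $\sE$-harmonically to $[0,\infty)$ as piecewise linear interpolation on each gap $(\hat c_{n+1},\hat c_n)$; near the accumulation point $0\in\overline E$ the extension is simply continuous with value $\hat f(0)$. Using $\hat c_n-\hat c_{n+1}=c_{n+1}-c_n$, a direct computation of the gradient part of $\sE$ gives
\[
\frac{1}{2}\int_0^\infty (H\hat f)'(x)(H\hat g)'(x)\dd x=\frac{1}{2}\sum_{n\in \bN}\frac{(\hat f(\hat c_{n+1})-\hat f(\hat c_n))(\hat g(\hat c_{n+1})-\hat g(\hat c_n))}{c_{n+1}-c_n},
\]
while the boundary term evaluates to $\frac{p_1}{2p_2}\hat f(0)\hat g(0)$. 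Transporting via $\Xi$ so that $\hat f(\hat c_n)\mapsto f(n)$ and $\hat f(0)\mapsto f(\infty)$, one recovers exactly $(\check\sE,\check\sF)$ as stated in the theorem; the existence of $f(\infty):=\lim_k f(k)$ under the assumption $\sA(f,f)<\infty$ follows from $c_\infty<\infty$ via Cauchy-Schwarz.

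Finally, to read off the parameters, I would compare $(\check\sE,\check\sF)$ with the general form of a symmetric Feller $Q$-process Dirichlet form. The absence of any non-local term $\sum_n(f(\infty)-f(n))(g(\infty)-g(n))\nu_n$ immediately forces $\nu\equiv 0$, and matching the ratio of the killing coefficient at $\infty$ to the gradient coefficient gives $\gamma/\beta=p_1/(2p_2)$, hence $(\gamma,\beta,\nu)=(p_1,2p_2,0)$ up to the multiplicative constant allowed by Theorem~\ref{THMB1}. The main delicate step is to properly identify the extended Dirichlet space $\sF_e$ at the accumulation point $0$, especially in the recurrent case $p_1=0$ where $\sF_e$ is only a Hilbert space modulo constants; one must verify that the harmonic extension procedure is well-defined and yields elements of $\sF_e$ for any $\hat f$ with finite value $\hat f(0)=f(\infty)$, and that $\overline E$ is a quasi-support of $\hat\mu$ so that the trace theorem applies without loss.
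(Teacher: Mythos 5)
Your overall strategy is essentially the one the paper follows: identify the Revuz measure of the PCAF $A$ as $\sum_n\mu_n\delta_{\hat c_n}$ via Definition~\ref{DEF37} and \cite[Proposition~4.1.10]{CF12}, invoke the trace Dirichlet form theorem to conclude that $\check Y$ is symmetric and its form is the trace of \eqref{eq:313} on $\overline E$, compute that trace, and read off $(\gamma,\beta,\nu)$. Where you differ is in the middle computational step. The paper computes the trace form by appealing to the Beurling--Deny decomposition of trace forms in \cite[Theorem~5.5.9]{CF12}, explicitly evaluating the Feller measure $U$ (supported on nearest-neighbour pairs with mass $1/(2(c_{n+1}-c_n))$) and the supplementary Feller measure $V$ (which vanishes), following the scheme of \cite{LY17}; whereas you propose a direct Douglas-integral computation via $\sE$-harmonic (piecewise linear) extensions of functions on $\overline E$. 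Both are standard routes and give the same answer; your route is somewhat more elementary and avoids the need to verify the Feller-measure formula, but it shifts the burden onto verifying that the harmonic-extension procedure does land you in $\sF_e$ and that $\overline E$ is a quasi-support of $\hat\mu$ -- exactly the points you flag. For the parameter identification at the end, your remark that the absence of a non-local term forces $\nu\equiv 0$ and that matching the killing/gradient coefficients yields $\gamma/\beta=p_1/(2p_2)$ is conceptually right, but it silently relies on the structure theorem for Dirichlet forms of symmetric $Q$-processes that the paper cites explicitly as \cite[Lemma~7.1 and Theorem~8.1]{L23}; without that reference the step is not self-contained. With those citations filled in, your proof is correct.
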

\begin{proof}
In this case,  the Revuz measure of the PCAF \eqref{eq:41} with respect to $Y$ is exactly $\tilde\mu:=\sum_{k\in \bN}\mu_k\delta_{\hat{c}_k}$,  by Definition~\ref{DEF37} and \cite[Proposition~4.1.10]{CF12}.  According to \cite[Theorem~5.2.2]{CF12}, the time-changed process $\check{Y}$ is symmetric with respect to $\tilde{\mu}$,  and its associated Dirichlet form $(\tilde\sE, \tilde \sF)$ on $L^2(\overline{E},\tilde{\mu})$ is derived in \cite[(5.2.4)]{CF12}. 

 In what follows,  we will compute $(\tilde{\sE},\tilde{\sF})$.  Regarding $\tilde{\sF}$,  we note that the extended Dirichlet space of \eqref{eq:313} is 
 \[
 	H^1_e([0,\infty)):=\{f\text{ is absolutely continuous on }[0,\infty)\text{ and }f'\in L^2([0,\infty))\},
 \]
as detailed in \cite[Theorem~2.2.11]{CF12}.  According to \cite[(5.2.4)]{CF12},  it is straightforward to compute that
 \begin{equation}\label{eq:52}
 \tilde{\sF}=H^1_e([0,\infty))|_{\overline{E}}\cap L^2(\overline{E},\tilde{\mu})=\{\Xi(f):f\in \check{\sF}\},
 \end{equation}
 where $\Xi(f)(\hat{c}_n):=f(n)$ for all $n\in \bN$.  The quadratic form $\tilde{\sE}$ can be formulated using \cite[Theorem~5.5.9]{CF12}.  The crucial step is to compute the \emph{Feller measure} $U$ on $\overline{E}\times \overline{E}$ and the \emph{supplementary Feller measure} $V$ on $\overline{E}$,  as defined in \cite[(5.5.7)]{CF12}.  In fact,  by mimicking the proof of \cite[Theorem~2.1]{LY17},  we can show that the strongly local part of $\tilde{\sE}$ vanishes, and $U$ is supported on $\{(\hat{c}_n,\hat{c}_{n+1}), (\hat{c}_{n+1},\hat c_n): n\in \bN\}\subset \overline{E}\times \overline{E}$ (with the notation $(\cdot, \cdot)$ indicating a pair of points, not an interval) with
 \[
 	U(\{(\hat{c}_n,\hat{c}_{n+1})\})=U(\{(\hat{c}_{n+1},\hat c_n) \})=\frac{1}{2|\hat c_{n}-\hat c_{n+1}|}=\frac{1}{2|c_{n+1}-c_n|}.  
 \]
 Regarding the measure $V$,  we note that $\bP_x(\tau\geq \zeta)=0$ for any $x\in [0,\infty)\setminus \overline{E}$,  where $\tau:=\inf\{t\in [0,\zeta]: Y_t\notin [0,\infty)\setminus \overline{E}\}$.  Thus,  by the definition \cite[(5.5.7)]{CF12} of $V$,  we have $V\equiv 0$.  Therefore,  applying \cite[Theorem~5.5.9]{CF12}, we obtain that for $u\in \tilde{\sF}$,
 \begin{equation}\label{eq:53}
 	\tilde\sE(u,u)=\frac{1}{2}\sum_{k\in \bN}\frac{(u(\hat{c}_{k+1})-u(\hat{c}_k))^2}{c_{k+1}-c_k}+\frac{p_1}{2p_2}u(0)^2.
 \end{equation}
 Applying the spatial transformation $\Xi$ to \eqref{eq:52} and \eqref{eq:53} yields the expression for $(\check{\sE},\check{\sF})$.  
 
 Finally,  the parameters $(\gamma,\beta,\nu)$ are given by  \eqref{eq:54},  as discussed in \cite[Lemma~7.1 and Theorem~8.1]{L23}. 
\end{proof}

\subsection{Non-symmetric case with finite jumping measure}

We turn to examine the non-symmetric case where $p_2>0$ and $0<|p_4|<\infty$.  The sample path representation of Feller's Brownian motion for this case is detailed in \S\ref{SEC322}.  Let $\pi$ be a probability measure on $\bN\cup \{\partial\}$ given by
\begin{equation}\label{eq:57}
\pi(\{\partial\}):=\frac{p_1}{p_1+|p_4|},\quad 	\pi(\{n\})=\pi_n:=\frac{\mathfrak{p}_n}{p_1+|p_4|},\quad n\in \bN,
\end{equation}
where $\{\mathfrak{p}_n\}$ is specified by \eqref{eq:55} and \eqref{eq:56}.

\begin{theorem}
If $p_2>0$ and $0<|p_4|<\infty$,  then the parameters $(\gamma,\beta,\nu)$ determining the resolvent matrix of the $Q$-process $X=\Xi(\check{Y})$ are 
\begin{equation}\label{eq:510}
	\gamma=p_1,\quad \beta=2p_2,\quad \nu_n=\mathfrak{p}_n,\quad n\in \bN.
\end{equation}
Furthermore,  $X$ can be obtained by piecing out $X^1$ with respect to the probability measure $\pi$ on $\bN\cup \{\partial\}$,  where $X^1$ is the symmetric $Q$-process corresponding to the parameters $(p_1+|p_4|, 2p_2, 0)$ and $\pi$ is defined as \eqref{eq:57}.
\end{theorem}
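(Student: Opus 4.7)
The plan is to prove the piecing-out representation first and then deduce the parameter identification from it. By \S\ref{SEC322}, $Y$ itself is the piecing-out of the symmetric Feller's Brownian motion $Y^1$ with parameters $(p_1+|p_4|,p_2,p_3,0)$ with respect to the probability measure $\lambda$ on $(0,\infty)\cup\{\partial\}$ defined in \eqref{eq:320}. The strategy is to show that time-change and piecing-out commute in the following concrete sense: the time-changed process $\check Y$ of $Y$ via the PCAF \eqref{eq:41} is the piecing-out of $X^1$---the time-change of $Y^1$, which is the symmetric $(Q,p_1+|p_4|,2p_2,0)$-process by Theorem~\ref{THM52}---with respect to the measure $\pi$ from \eqref{eq:57}. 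Once this is established, the parameter identification will follow from the known effect of piecing-out on the resolvent of a $Q$-process.

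The first ingredient is Lemma~\ref{LM36}: on every excursion of $Y$ between successive resurrections, the local times $L^{\hat c_n}$ coincide with the analogous local times of $Y^1$. Consequently the PCAF \eqref{eq:41} is a concatenation of independent copies of the corresponding PCAF of $Y^1$, and its right-continuous inverse splices copies of $X^1$ end to end. It remains to identify the distribution of the state to which $\check Y$ resurrects. At a resurrection of $Y$ at $\xi\sim\lambda$, $A$ does not increase until $Y$ re-enters $\overline E$; hence $\check Y$ jumps from $0$ to the first point of $\overline E$ reached by $Y$ after $\xi$. Since $Y$ behaves as a one-dimensional Brownian motion on $(0,\infty)$ until hitting $\overline E$, the classical linear harmonic formula gives probability $(\xi-\hat c_{n+1})/(\hat c_n-\hat c_{n+1})$ that the first hit is $\hat c_n$ for $\xi\in(\hat c_{n+1},\hat c_n]$, with a symmetric formula on $[\hat c_n,\hat c_{n-1})$; from any $\xi>\hat c_0$ the first hit is deterministically $\hat c_0$, since Brownian motion must cross $\hat c_0$ before reaching $0$. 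Integrating these hitting probabilities against $\lambda|_{(0,\infty)}=p_4/(p_1+|p_4|)$ reproduces precisely the weights in \eqref{eq:55} and \eqref{eq:56}, so that the resurrection distribution of $\check Y$ on $\overline E\cup\{\partial\}$ is exactly $\pi$.

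With the piecing-out representation of $X=\Xi(\check Y)$ in hand, its resolvent follows from the $Q$-process analogue of the identity \eqref{eq:325}, namely \cite[Theorem~5.1]{L23}. Substituting the $(Q,p_1+|p_4|,2p_2,0)$-resolvent of $X^1$ and matching the resulting expression against the $(Q,\gamma,\beta,\nu)$-resolvent formula \eqref{eq:B2} yields $\gamma=p_1$, $\beta=2p_2$ and $\nu_n=\mathfrak{p}_n$, up to a common positive multiplier---reflecting that piecing-out converts the killing rate $p_1+|p_4|$ of $X^1$ into the killing rate $(p_1+|p_4|)\pi(\{\partial\})=p_1$ and the jump rates $(p_1+|p_4|)\pi_n=\mathfrak{p}_n$, while leaving the reflecting parameter $\beta=2p_2$ untouched. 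The main technical obstacle I anticipate is the commutation argument of the second paragraph: the resurrection point $\xi$ of $Y$ may fall outside $\overline E$, and one must verify that the ensuing Brownian excursion of $Y$ reaches $\overline E$ in finite time almost surely, so that $\check Y$ has a well-defined jump target and the renewal structure carries over unambiguously to the time-changed process.
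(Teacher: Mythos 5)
Your proposal is correct and follows essentially the same route as the paper: both identify the killed process $\check{Y}^1$ (at time $\check{\zeta}^1=A_{\zeta^1}$) with $X^1$ via Theorem~\ref{THM52}, compute the resurrection distribution as $\pi$ by integrating the linear harmonic hitting probabilities against $\lambda$ (as in \eqref{eq:51}), and then read off $(\gamma,\beta,\nu)$ from the piecing-out resolvent formula. The paper makes the piecing-out representation rigorous via Dynkin's formula for $\tilde R_\alpha$ and the independence of $Y_{\zeta^1}$ from $Y^1$, citing \cite[Theorem~8.1]{L23} at the final step, where your argument remains at the informal trajectory-splicing level and cites \cite[Theorem~5.1]{L23}, but the underlying idea is the same.
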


\begin{proof}
Let $Y^1$, with lifetime $\zeta^1$, be the symmetric Feller's Brownian motion with parameters $(p_1+|p_4|, p_2,p_3,0)$,  as discussed in \S\ref{SEC322}.  It represents the killed process of $Y$ at $\zeta^1$,  and $Y$ can be obtained by piecing out $Y^1$ with respect to the probability measure $\lambda$ given by \eqref{eq:320}.  {\color{blue}In particular},  we have
\[
	\bP_x(Y_{\zeta^1}\in \cdot)=\lambda(\cdot),\quad \forall x\in [0,\infty).
\]
By following the steps used to obtain $\hat{Y}^0=\check{Y}^0$ in the proof of Theorem~\ref{THM42},  we can also show that the killed process $\check{Y}^1$ of $\check{Y}$ at time $\check{\zeta}^1:=A_{\zeta^1}$ is identical in law to the time-changed process of $Y^1$ with respect to the PCAF $A^1_t:=A_{t\wedge \zeta^1}, t\geq 0$.  Note that the Revuz measure of $A^1$ with respect to $Y^1$ is exactly $\sum_{n\in \bN}\mu_n\delta_{\hat{c}_n}$.  Hence,  according to Theorem~\ref{THM52},  $\Xi(\check{Y}^1)$ is identical in law to $X^1$.  

Denote by $\tilde R_\alpha^1$ and $\tilde{R}_\alpha$ the resolvents of $\check{Y}^1$ and $\check{Y}$, respectively.  For a bounded function $f$ on $\overline{E}$,  Dynkin's formula gives us:
\begin{equation}\label{eq:58}
	\tilde{R}_\alpha f(\hat{c})=\bE_{\hat{c}}\int_0^\infty e^{-\alpha t}f(\check{Y}_t)\dd t=\tilde{R}^1_\alpha f(\hat{c})+\bE_{\hat{c}}\left(e^{-\alpha \check{\zeta}^1}\tilde{R}_\alpha f(\check{Y}_{\check{\zeta}^1})\right),\quad \forall \hat{c}\in \overline{E}.
\end{equation}
Let $\sigma:=\inf\{t>0:Y_t\in E\}$ and $\sigma_1:=\inf\{t>\zeta^1: Y_t\in E\}$.  Then,
\[
	\gamma_{\check{\zeta}^1}=\inf\{t>0:A_t>A_{\zeta^1}\}=\sigma_1.  
\]
Note that $Y_{\sigma_1}=Y_\sigma\circ \theta_{\zeta^1}$. According to the construction procedures of piecing out as stated in \cite[Appendix~A]{L23},  it is straightforward to see that $Y_{\zeta^1}$ (with distribution $\lambda$) is independent of $Y^1$.  Thus, $Y_{\zeta^1}$ is also independent of $\check{\zeta}^1=A_{\zeta^1}$.  From this independence and the strong Markov property of $Y$,  it follows that
\begin{equation}\label{eq:59}
\begin{aligned}
\bE_{\hat{c}}\left(e^{-\alpha \check{\zeta}^1}\tilde{R}_\alpha f(\check{Y}_{\check{\zeta}^1})\right)&=\bE_{\hat{c}}\left(e^{-\alpha \check{\zeta}^1}\tilde{R}_\alpha f(Y_\sigma\circ \theta_{\zeta^1})\right)\\
&=\bE_{\hat{c}}\left(e^{-\alpha \check{\zeta}^1}\bE_{Y_{\zeta^1}}\left(\tilde{R}_\alpha f(Y_\sigma)\right)\right) \\
&=\bE_{\hat{c}} \left(e^{-\alpha \check{\zeta}^1}\right)\cdot \bE_{\lambda}\left(\tilde{R}_\alpha f(Y_\sigma)\right). 
\end{aligned}\end{equation}
{\color{blue}Similar to \eqref{eq:51}},  we can also deduce that $\bP_\lambda(Y_\sigma\in \cdot)=\tilde\pi(\cdot)$, where $\tilde \pi(\{\hat{c}_n\}):=\pi_n$ for all $n\in \bN$ and $\tilde{\pi}(\{\partial\}):=\pi(\{\partial\})$.  Substituting \eqref{eq:59} into \eqref{eq:58} yields
\[
\tilde{R}_\alpha f(\hat{c})=\tilde{R}^1_\alpha f(\hat{c})+\bE_{\hat{c}}\left(e^{-\alpha \check{\zeta}^1}\right)\cdot \tilde{\pi}(\tilde{R}_\alpha f).
\]
Integrating both sides with respect to $\tilde{\pi}$,  we get 
\[
	\tilde{\pi}(\tilde{R}_\alpha f)=\frac{\tilde{\pi}(\tilde{R}^1_\alpha f)}{1-\bE_{\tilde{\pi}} \left(e^{-\alpha \check{\zeta}^1}\right)}.
\]
It follows that
\[
	\tilde{R}_\alpha f(\hat{c})=\tilde{R}^1_\alpha f(\hat{c})+\frac{\tilde{\pi}(\tilde{R}^1_\alpha f)}{1-\bE_{\tilde{\pi}} \left(e^{-\alpha \check{\zeta}^1}\right)}\cdot \bE_{\hat{c}}\left(e^{-\alpha \check{\zeta}^1}\right),\quad \forall \hat{c}\in \overline{E}.
\]
Finally,  repeating the argument in the proof of \cite[Theorem~8.1]{L23} (the steps after (8.5)) and applying the spatial transformation to $\check{Y}$,  we can conclude that $X$ is the piecing out of $X^1$ with respect to $\pi$.  {\color{blue}In particular},  the parameters $(\gamma,\beta,\nu)$ for $X$ are given by \eqref{eq:510}. 
\end{proof}

\section{Approximation of Feller's Brownian motion}\label{SEC7}

We now turn to the `pathological’ case where $|p_4|=\infty$. Our primary method involves constructing a sequence of Markov processes that converge to Feller’s Brownian motion, using the strategy outlined in the construction of $Q$-processes in \cite[\S6.1-\S6.6]{WY92}. Each constituent process is the piecing out of the {\color{blue}killed} Brownian motion with respect to a specific probability measure $\lambda^{(n)}$ on $(0,\infty)\cup \{\partial\}$, whose resolvent is expressed as \eqref{eq:325} with $\lambda=\lambda^{(n)}$. {\color{blue}Recall that these processes are referred to as Doob's Brownian motions in Remark~\ref{RM32}, with $\lambda^{(n)}$ referred to as the instantaneous distribution (of the piecing out transformation)}.


\subsection{Approximating sequence of Doob's Brownian motions}

We begin by introducing an important transformation on the sample paths. Let $x(t)$ be a right-continuous function on $[0,\infty)\cup \{\partial\}$.  Consider two sequences of positive constants $(\alpha_m)$ and $(\beta_m)$ such that 
\[
	0(=:\beta_0)<\alpha_1\leq \beta_1<\alpha_2\leq \beta_2<\cdots.
\]
({\color{blue}These sequences can be finite}.) We say that the function $y(t)$ is obtained from $x(t)$ by the $C(\alpha_m,\beta_m)$-transformation if 
\[
\begin{aligned}
	&y(t)=x(t),  &\quad 0\leq t<\alpha_1,\\
	&y(d_m+t)=x(\beta_m+t), &\quad 0\leq t<\alpha_{m+1}-\beta_m,
\end{aligned}\]
where $d_1:=\alpha_1$ and $d_{m+1}:=d_m+(\alpha_{m+1}-\beta_m)$.  Intuitively speaking, the $C(\alpha_m,\beta_m)$-transformation discards the path of $x(t)$ corresponding to the interval $[\alpha_m, \beta_m)$, keeps the segment $[0, \alpha_1)$ unchanged, and shifts the remaining parts to the left, connecting them in the original order without intersection, thereby obtaining a new right-continuous path $y(t)$.

Fix $n\in \bN$.  Define $\eta:=\inf\{t>0:Y_{t-}=0\}$ and $\sigma^{(n)}:=\inf\{t> 0: Y_t\geq \hat{c}_n\}\wedge \zeta$ ($\inf \emptyset:=\infty$).  We then define a sequence of stopping times as follows:
 \[
 \eta^{(n)}_1:=\eta,\quad	\sigma^{(n)}_1:=\inf\{t\geq \eta_1^{(n)}: {\color{blue}Y_t}\geq \hat{c}_n\}\wedge \zeta,
 \]
 and if $\eta^{(n)}_{m-1}, \sigma^{(n)}_{m-1}$ are already defined,  we set
 \[
 	\eta^{(n)}_m:=\inf\{t\geq \sigma^{(n)}_{m-1}: Y_{t-}=0\}\wedge \zeta
 \]
and 
\[
	\sigma^{(n)}_m:=\inf\{t\geq \eta^{(n)}_m: Y(t)\geq \hat{c}_n\}\wedge \zeta.
\]
Note that if $\eta^{(n)}_{m}<\zeta$,  then $Y_{\eta^{(n)}_m}=0$ due to the quasi-left-continuity of $Y$.  {\color{blue}In particular}, $\eta=\tau_0=\inf\{t>0:Y_t=0\}$.  We define $\eta$ using the left limit of $Y$ instead of directly using $\tau_0$ because, although our focus is on Feller's Brownian motion, the discussion in this section also applies to Doob's Brownian motion.  For Doob's Brownian motion, defining the `return’ time to $0$ requires the use of the left limit.

For $n\in \bN$ and every $\omega\in \Omega$,   by performing the $C(\eta^{(n)}_m(\omega),\sigma^{(n)}_m(\omega))$-transformation on $Y_t(\omega)$,  we obtain a new path,  denoted by $Y^{(n)}_t(\omega)$.  {\color{blue}Note that this new path is clearly right continuous.}

{\color{blue}
\begin{lemma}\label{LM71}
For any $t\geq 0$,  $Y^{(n)}_t$ is $\sG$-measurable. 
\end{lemma}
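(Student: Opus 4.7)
The plan is to exhibit $Y^{(n)}_t$ explicitly as $Y$ evaluated at a $\sG$-measurable random time, and then invoke the progressive measurability of $Y$. First I will check that every $\eta^{(n)}_m$ and $\sigma^{(n)}_m$ is an $(\sG_s)$-stopping time. Since $(\sG_s)$ is the (right-continuous) augmented natural filtration of the c\`adl\`ag process $Y$, the d\'ebut theorem applies: $\eta=\inf\{s>0:Y_{s-}=0\}$ is the first time the left-continuous modification $(Y_{s-})$ hits the closed set $\{0\}$, and $\sigma^{(n)}_1$ is the entry time of $Y$ into the closed set $[\hat c_n,\infty)$ after $\eta$; the recursion then produces the remaining $\eta^{(n)}_m,\sigma^{(n)}_m$ as stopping times, all in particular $\sG_\infty$-measurable.

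Next I record the accumulated discarded time. Set $d_0:=0$ and
\[
d_m:=\eta^{(n)}_1+\sum_{k=1}^{m-1}\bigl(\eta^{(n)}_{k+1}-\sigma^{(n)}_k\bigr),\quad m\geq 1,\qquad d_\infty:=\lim_{m\to\infty}d_m.
\]
The $C(\eta^{(n)}_m,\sigma^{(n)}_m)$-transformation translates, on $\{d_m\leq t<d_{m+1}\}$, to
\[
Y^{(n)}_t=Y_{t+\rho^{(n)}_t},\qquad \rho^{(n)}_t:=\sum_{k=1}^{m}\bigl(\sigma^{(n)}_k-\eta^{(n)}_k\bigr),
\]
while $Y^{(n)}_t=\partial$ on $\{t\geq d_\infty\}$ (under the convention $Y_s=\partial$ for $s\geq \zeta$). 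Each $d_m$ is a finite combination of $\sG$-measurable stopping times, $d_\infty$ is a pointwise limit of such, and hence the events $\{d_m\leq t<d_{m+1}\}$ and $\{t\geq d_\infty\}$ lie in $\sG$; on each such event $t+\rho^{(n)}_t$ is a $\sG$-measurable nonnegative random variable.

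Finally, because $Y$ is c\`adl\`ag and adapted to $(\sG_s)$ (with $Y_s([\partial])=\partial$ for all $s\geq 0$), the map $(s,\omega)\mapsto Y_s(\omega)$ is $\mathcal{B}([0,\infty])\otimes \sG_\infty$-measurable. Composing with the $\sG$-measurable map $\omega\mapsto(t+\rho^{(n)}_t(\omega),\omega)$ yields
\[
Y^{(n)}_t=\sum_{m\geq 0}1_{\{d_m\leq t<d_{m+1}\}}\,Y_{t+\rho^{(n)}_t}+1_{\{t\geq d_\infty\}}\,\partial
\]
as a $\sG$-measurable random variable. The only point requiring care is the bookkeeping when the recursion terminates in finitely many steps (i.e.\ $\eta^{(n)}_m=\zeta$ or $\sigma^{(n)}_m=\zeta$ for some $m$) or when $d_\infty<\infty$; in both situations $Y^{(n)}_t=\partial$ on $\{t\geq d_\infty\}$ and the corresponding indicator is $\sG$-measurable by the stopping-time structure, so no substantive obstacle arises.
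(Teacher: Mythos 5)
Your proof is correct and takes essentially the same route as the paper's: both decompose $Y^{(n)}_t$ over the random intervals $[d_m,d_{m+1})$ as $Y$ evaluated at the shifted time $t+\rho^{(n)}_t$ (equivalently $t-\sum_{i\le m}\zeta_i+\sigma^{(n)}_m$), and invoke that the $\eta^{(n)}_m,\sigma^{(n)}_m$ are stopping times. You simply spell out the two measurability ingredients (d\'ebut theorem and progressive measurability of the c\`adl\`ag adapted $Y$) that the paper dismisses as ``straightforward to verify.''
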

\begin{proof}
Let $\zeta_1:=\eta^{(n)}_1=\eta$ and for $m\geq 2$, define
 \begin{equation}\label{eq:7111}
 	\zeta_m:=\eta^{(n)}_m-\sigma^{(n)}_{m-1}.
 \end{equation}
By the above construction of $Y^{(n)}$,  we have
\[
	Y^{(n)}_t =Y_t\cdot 1_{[0,\zeta_1)}(t)+\sum_{m\geq 2}Y_{t-\sum_{i=1}^m\zeta_i+\sigma^{(n)}_m}\cdot 1_{[\sum_{i=1}^m\zeta_i,  \sum_{i=1}^{m+1}\zeta_i)}(t)+\partial \cdot 1_{[\sum_{i= 1}^\infty\zeta_i,\infty)}(t). 
\]
Since both $\eta^{(n)}_m$ and $\sigma^{(n)}_m$ are $\sG_t$-stopping times,  it is straightforward to verify that $Y^{(n)}_t$ is $\sG$-measurable. 
\end{proof}

Let $\sG^{(n)}_t$ denote the augmentation of the $\sigma$-algebra generated by $\{Y^{(n)}_s:0\leq s\leq t\}$ under $\bP_\mu$, where $\mu$ ranges over all probability measures on $(0,\infty)$. The preceding lemma implies that $\sG^{(n)}_t\subset \sG$.  Next, define the shift operator $\theta^{(n)}_t:\Omega\rightarrow \Omega$ by requiring that $Y^{(n)}_s(\theta^{(n)}_t\omega)=Y^{(n)}_{s+t}(\omega)$ for all $s \geq 0$.  For any $f\in \mathcal{B}_b((0,\infty))$ and $t\geq 0$,  set
\[
	T^{(n)}_t f(x):=\bE_x f(Y^{(n)}_t),\quad x\in (0,\infty).
\]
Since the path $t\mapsto Y^{(n)}_t$ is right continuous,  it follows that for any $f\in C_b((0,\infty))$, 
\begin{equation}\label{eq:72222}
	t\mapsto T_t^{(n)}f(x)
\end{equation}
is right continuous.
}

\begin{lemma}\label{LM61}
The process 
\begin{equation}\label{eq:7222}
	Y^{(n)}:=\left(\Omega, \sG,  {\color{blue}\sG^{(n)}_t}, Y^{(n)}_t,{\color{blue}\theta^{(n)}_t}, (\bP_x)_{x\in (0,\infty)}\right)
\end{equation}
 is a Doob's Brownian motion with instantaneous distribution 
 \[
 \lambda^{(n)}(\cdot)=\bP_0(Y_{\sigma^{(n)}}\in \cdot)
\]
supported on $[\hat{c}_n,\infty)\cup \{\partial\}$. 
\end{lemma}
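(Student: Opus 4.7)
The plan is to verify that the resolvent of $Y^{(n)}$ coincides with the Doob's Brownian motion resolvent~\eqref{eq:325} taken with $\lambda=\lambda^{(n)}$. Fix $x\in(0,\infty)$, $\alpha>0$ and $f\in\mathcal{B}_b((0,\infty))$, and set $G^{(n)}_\alpha f(x):=\bE_x\int_0^\infty e^{-\alpha t}f(Y^{(n)}_t)\dd t$. Partitioning the time axis into the initial interval $[0,\zeta_1)$ and the successive intervals $[\sum_{i=1}^m\zeta_i,\sum_{i=1}^{m+1}\zeta_i)$ for $m\geq 1$, where $\zeta_k$ is as in~\eqref{eq:7111}, and observing that $Y^{(n)}_t=Y_t$ for $t<\eta$ with $Y|_{[0,\eta)}$ being the killed Brownian motion from $x$, the first piece contributes exactly $G^0_\alpha f(x)$. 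After the substitution $s=t-\sum_{i=1}^m\zeta_i$, each subsequent piece becomes
\[
J_m = \bE_x\left[e^{-\alpha\sum_{i=1}^m\zeta_i}\int_0^{\zeta_{m+1}}e^{-\alpha s}f(Y_{s+\sigma^{(n)}_m})\dd s\right].
\]

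Applying the strong Markov property of $Y$ at $\sigma^{(n)}_m$, together with the identity $\zeta_{m+1}=\eta\circ\theta_{\sigma^{(n)}_m}$, collapses the inner integral to $G^0_\alpha f(Y_{\sigma^{(n)}_m})$, so $J_m=\bE_x\bigl[e^{-\alpha\sum_{i=1}^m\zeta_i}G^0_\alpha f(Y_{\sigma^{(n)}_m})\bigr]$. I then would establish by induction on $m$ the identity
\[
J_m = \bE_x(e^{-\alpha\eta})\cdot\bigl(\bE_{\lambda^{(n)}}(e^{-\alpha\eta})\bigr)^{m-1}\cdot\int G^0_\alpha f\,\dd\lambda^{(n)}.
\]
The base case $m=1$ follows from strong Markov at $\eta^{(n)}_1=\eta$ (using $Y_\eta=0$ by quasi-left-continuity) together with the defining identity $\lambda^{(n)}(\cdot)=\bP_0(Y_{\sigma^{(n)}}\in\cdot)$. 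For the inductive step, iterated applications of the strong Markov property at the stopping times $\eta^{(n)}_k$ (where $Y=0$) and $\sigma^{(n)}_k$ (where $Y_{\sigma^{(n)}_k}\sim\lambda^{(n)}$) show that the pairs $(Y_{\sigma^{(n)}_k},\zeta_{k+1})$, $k\geq 1$, are i.i.d.\ under $\bP_x$ and independent of $\zeta_1=\eta$, with marginals $Y_{\sigma^{(n)}_k}\sim\lambda^{(n)}$ and $\bE[e^{-\alpha\zeta_{k+1}}\mid Y_{\sigma^{(n)}_k}=y]=\bE_y e^{-\alpha\eta}=e^{-\sqrt{2\alpha}y}$.

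Summing the resulting geometric series in $m$ (which converges because $\bE_{\lambda^{(n)}}e^{-\alpha\eta}<1$ for $\alpha>0$) and adding the initial $G^0_\alpha f(x)$ reproduces the right-hand side of~\eqref{eq:325} with $\lambda=\lambda^{(n)}$, using $\bE_x(e^{-\alpha\zeta^0})=\bE_x(e^{-\alpha\eta})=e^{-\sqrt{2\alpha}x}$. Since the pathwise construction of $Y^{(n)}$ as a concatenation of independent killed-Brownian-motion excursions, restarted at $\lambda^{(n)}$-distributed points through the strong Markov property of $Y$ at each $\eta^{(n)}_k$, directly mirrors the piecing-out definition of Doob's Brownian motion in Remark~\ref{RM32}, the resolvent coincidence identifies $Y^{(n)}$ with the claimed process. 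The main technical obstacle is bookkeeping the i.i.d.\ structure of $(Y_{\sigma^{(n)}_k},\zeta_{k+1})$ while cleanly absorbing the killing mass: contributions from $\{Y_{\sigma^{(n)}_k}=\partial\}$ drop out of the sum since $e^{-\alpha\zeta_{k+1}}=0$ there, consistent with $\lambda^{(n)}$ being supported on $[\hat{c}_n,\infty)\cup\{\partial\}$ and $\bE_\partial e^{-\alpha\eta}=0$.
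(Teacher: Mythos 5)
Your proposal is correct and follows essentially the same route as the paper's own proof: decompose the time axis at the stopping times $\zeta_1,\zeta_1+\zeta_2,\dots$, reduce each segment's contribution $J_m$ to $\bE_x\bigl[e^{-\alpha\sum_{i=1}^m\zeta_i}G^0_\alpha f(Y_{\sigma^{(n)}_m})\bigr]$ via strong Markov at $\sigma^{(n)}_m$, prove the closed form for $J_m$ by induction (the paper spells out $m=1,2$ and says the general case is similar; your i.i.d.-pairs phrasing is the same computation packaged slightly differently), then sum the geometric series and match \eqref{eq:325}. Your explicit remark about the killing mass dropping out on $\{Y_{\sigma^{(n)}_k}=\partial\}$ is a small clarification the paper leaves implicit, but there is no genuine difference in method.
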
  
\begin{proof} 
Fix $x\in (0,\infty)$ and $\alpha>0$.  We aim to compute the resolvent of $Y^{(n)}$ for a positive and bounded Borel measurable function $f$ on $(0,\infty)$:
\[
	G^{(n)}_\alpha f(x)=\bE_x \int_0^\infty e^{-\alpha t}f(Y^{(n)}_t)\dd t.
\]
By the definition of $Y^{(n)}_t$,  we have
\begin{equation}\label{eq:62}
	G^{(n)}_\alpha f(x)=\bE_x\int_0^{\zeta_1} e^{-\alpha t}f(Y_t)\dd t+\sum_{m\geq 1}\bE_x\int_{\sum_{i=1}^m\zeta_i}^{\sum_{i=1}^{m+1}\zeta_i}e^{-\alpha t}f(Y_{t-\sum_{i=1}^m\zeta_i+\sigma^{(n)}_m})\dd t,
\end{equation}
where $\zeta_1=\eta$ and $\zeta_m$, $m\geq 2$, are defined in \eqref{eq:7111}. 
Set 
\[
\begin{aligned}
	J_m&:=\bE_x\int_{\sum_{i=1}^m\zeta_i}^{\sum_{i=1}^{m+1}\zeta_i}e^{-\alpha t}f(Y_{t-\sum_{i=1}^m\zeta_i+\sigma^{(n)}_m})\dd t\\
	&=\bE_x e^{-\alpha \sum_{i=1}^m\zeta_i}\int_0^{\zeta_{m+1}}e^{-\alpha t}f(Y_{t+\sigma^{(n)}_m})\dd t.
\end{aligned}\]
Note that $\zeta_{m+1}=\eta\circ \theta_{\sigma^{(n)}_m}$.  It follows from the strong Markov property that
\[
\begin{aligned}
	J_m&=\bE_x \left(e^{-\alpha \sum_{i=1}^m\zeta_i}\bE_{Y_{\sigma^{(n)}_m}}\int_0^\eta e^{-\alpha t}f(Y_t)\dd t\right) \\
	&=\bE_x \left(e^{-\alpha \sum_{i=1}^m\zeta_i}G^0_\alpha f(Y_{\sigma^{(n)}_m})\right),
	\end{aligned}
\]
where $G^0_\alpha$ is the resolvent of the {\color{blue}killed} Brownian motion. 

Let us prove that
\begin{equation}\label{eq:61}
	J_m=\bE_x e^{-\alpha \eta}\cdot \left(\bE_{\lambda^{(n)}}e^{-\alpha \eta}\right)^{m-1}\cdot \int_0^\infty G^0_\alpha f(x) \lambda^{(n)}(\dd x),\quad m\geq 1. 
\end{equation}
For $m=1$,  we have $Y_{\sigma^{(n)}_1}=Y_{\sigma^{(n)}}\circ \theta_{\eta}$,  and thus, by the strong Markov property of $Y$,  
{\color{blue}\[
\begin{aligned}
	J_1&=\bE_x \left(e^{-\alpha \eta}G^0_\alpha f(Y_{\sigma^{(n)}}\circ \theta_\eta)\right)=\bE_x \left(e^{-\alpha \eta}\bE_{Y_\eta}\left(G^0_\alpha f \left(Y_{\sigma^{(n)}}\right)\right)\right) \\
	&=\bE_x e^{-\alpha \eta} \cdot \bE_0\left(G^0_\alpha f \left(Y_{\sigma^{(n)}}\right)\right)=\bE_x e^{-\alpha \eta}\cdot \int_0^\infty G^0_\alpha f(x) \lambda^{(n)}(\dd x).
\end{aligned}\]}
In general,  note that $Y_{\sigma^{(n)}_i}=Y_{\sigma^{(n)}}\circ \theta_{\eta^{(n)}_i}$ and  $\zeta_i=\eta^{(n)}_i-\sigma^{(n)}_{i-1}=\eta\circ \theta_{\sigma^{(n)}_{i-1}}$ for all $i\geq 2$.  
Then the case $m=2$ can be deduced by the strong Markov property as follows:
\[
\begin{aligned}
	J_2&=\bE_x \left(e^{-\alpha (\zeta_1+\zeta_2)}\bE_{Y_{\eta^{(n)}_2}}\left({\color{blue}G^0_\alpha f} \left(Y_{\sigma^{(n)}}\right)\right)\right) \\
	&=\bE_x e^{-\alpha(\eta+\eta\circ \theta_{\sigma^{(n)}_1})}\cdot \int_0^\infty G^0_\alpha f(x) \lambda^{(n)}(\dd x) \\
	&=\bE_x \left(e^{-\alpha \eta} \bE_{Y_{\sigma^{(n)}_1}}\left(e^{-\alpha \eta} \right) \right)\cdot \int_0^\infty G^0_\alpha f(x) \lambda^{(n)}(\dd x). \\
	\end{aligned}
\]
Using $Y_{\sigma^{(n)}_1}=Y_{\sigma^{(n)}}\circ \theta_\eta$ and $Y_\eta=0$,  we obtain that
\[
\begin{aligned}
	J_2&=\bE_x \left(e^{-\alpha \eta} \bE_{Y_{\sigma^{(n)}}\circ \theta_\eta}\left(e^{-\alpha \eta} \right) \right)\cdot \int_0^\infty G^0_\alpha f(x) \lambda^{(n)}(\dd x) \\
	&=\bE_x e^{-\alpha \eta}\cdot \bE_0 \left(\bE_{Y_{\sigma^{(n)}}}e^{-\alpha \eta} \right)\cdot \int_0^\infty G^0_\alpha f(x) \lambda^{(n)}(\dd x)\\
	&=\bE_x e^{-\alpha \eta}\cdot \bE_{\lambda^{(n)}}\left(e^{-\alpha \eta} \right)\cdot \int_0^\infty G^0_\alpha f(x) \lambda^{(n)}(\dd x).
\end{aligned}\]
The general case can be formulated similarly.  

By substituting \eqref{eq:61} into \eqref{eq:62},  we conclude that
\begin{equation}\label{eq:625}
G^{(n)}_\alpha f(x)=G_\alpha^0 f(x)+\bE_x e^{-\alpha \eta} \frac{\int_0^\infty G^0_\alpha f(x) \lambda^{(n)}(\dd x)}{1-\bE_{\lambda^{(n)}}\left(e^{-\alpha \eta} \right)}.
\end{equation}
{\color{blue}Recall that the resolvent of a Doob's Brownian motion is expressed as \eqref{eq:325}.} This result corresponds precisely to the resolvent of the Doob's Brownian motion with instantaneous distribution $\lambda^{(n)}$.  
\end{proof}

{\color{blue}\begin{remark}
As we noted in Remark~\ref{RM32},  a Doob's Brownian motion corresponds to a Ray process on $[0,\infty]$,  where $\infty$ is regarded as the cemetery.  This means,  the extension $\bar{G}^{(n)}_\alpha$ of $G^{(n)}_\alpha$ to  $[0,\infty]$ is a Ray resolvent.  (For the definition of Ray resolvent,  see \cite[Definition~9.4]{S88}.)  According to \cite[Theorem~9.8]{S88},  $\bar{G}^{(n)}_\alpha$ corresponds to a unique Ray semigroup $(\bar T^{(n)}_t)_{t\geq 0}$,  and by the right continuity of \eqref{eq:72222},  the restriction of $\bar{T}^{(n)}_t$ to $(0,\infty)$ is exactly $T^{(n)}_t$.  In light of \cite[Theorem~9.13]{S88},  $T^{(n)}_t$ is a right semigroup in the sense of \cite[Definition~8.1]{S88}.  Then the collection in \eqref{eq:7222} is a right continuous realization of $T^{(n)}_t$ in the sense of \cite[Definition~2.2]{S88}.  Applying \cite[Theorem~19.3]{S88},  we can conclude that \eqref{eq:7222} is a (Borel) right process, which, in particular,  satisfies the strong Markov property.

It should be emphasized that in the following discussion, we rely solely on the resolvent representation \eqref{eq:625} of $Y^{(n)}$ and the right continuity of its sample paths, without invoking any further properties of the process.
\end{remark}}

The sequence of Doob's Brownian motions $Y^{(n)}$ approximates $Y$ in the case where $p_3=0$ in the following sense.

\begin{theorem}\label{THM62}
Assume that $p_3=0$.  Then, for any $x\in (0,\infty)$,
\begin{equation}\label{eq:719}
	\bP_x\left(\lim_{n\rightarrow \infty}Y^{(n)}_t=Y_t, \forall t\geq 0\right)=1.
\end{equation}
\end{theorem}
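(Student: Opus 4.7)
The plan is to show that the total length of the discarded intervals in the construction of $Y^{(n)}$ from $Y$ vanishes as $n \to \infty$. For each $\omega \in \Omega$, define $\rho^{(n)}_t(\omega) \geq 0$ so that $Y^{(n)}_t(\omega) = Y_{t + \rho^{(n)}_t(\omega)}(\omega)$; concretely, $\rho^{(n)}_t$ is the total Lebesgue measure of $D^{(n)}(\omega) := \bigcup_m [\eta^{(n)}_m(\omega), \sigma^{(n)}_m(\omega))$ inside the old-time window $[0, t + \rho^{(n)}_t]$, and thus satisfies the fixed-point identity $\rho^{(n)}_t = |D^{(n)} \cap [0, t + \rho^{(n)}_t]|$. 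Since $Y$ is right continuous and $\rho^{(n)}_t \geq 0$, the conclusion \eqref{eq:719} will follow once we establish
\[
\bP_x\left(\lim_{n \to \infty} \rho^{(n)}_t = 0, \; \forall t \geq 0\right) = 1.
\]

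The key geometric observation is that on each discarded interval $[\eta^{(n)}_m, \sigma^{(n)}_m)$ one has $Y_s < \hat{c}_n$, because $Y_{\eta^{(n)}_m} = 0$ by quasi-left-continuity and $\sigma^{(n)}_m$ is the subsequent first hitting time of $[\hat{c}_n, \infty)$. Consequently $D^{(n)} \subset \{s \geq 0 : Y_s < \hat{c}_n\}$, and for every $T > 0$,
\[
|D^{(n)} \cap [0, T]| \leq |\{s \in [0, T] : Y_s < \hat{c}_n\}|.
\]
Since $\hat{c}_n \downarrow 0$, the right-hand side decreases as $n \to \infty$ to $|\{s \in [0, T] : Y_s = 0\}|$ by continuity of Lebesgue measure from above. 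The assumption $p_3 = 0$ enters precisely here, through the pathwise representation of \S\ref{SEC323}: when $p_3 = 0$, the time change $\mathfrak{f}^{-1}$ in \eqref{eq:321} is the identity, so (prior to the multiplicative-functional killing \eqref{eq:327}) the path of $Y$ coincides with $Z(Z^{-1}(\ell_\cdot)) - \ell_\cdot + W^+_\cdot \geq W^+_\cdot$. Hence $\{s < \zeta : Y_s = 0\} \subset \{s : W^+_s = 0\}$, and since reflecting Brownian motion a.s.\ spends zero Lebesgue time at $0$, we conclude $|\{s \in [0, T] : Y_s = 0\}| = 0$ almost surely.

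Combining these two paragraphs, $|D^{(n)} \cap [0, T]| \to 0$ a.s.\ for any fixed $T$. A short fixed-point argument then yields $\rho^{(n)}_t \to 0$ a.s.\ for each fixed $t$: given $\varepsilon > 0$, choose $n$ so large that $|D^{(n)} \cap [0, t + \varepsilon]| < \varepsilon$, which via the identity $\rho^{(n)}_t = |D^{(n)} \cap [0, t + \rho^{(n)}_t]|$ forces $\rho^{(n)}_t < \varepsilon$. Since $t \mapsto \rho^{(n)}_t$ is non-decreasing, taking a countable union of the exceptional sets along $t \in \mathbb{N}$ produces a single a.s.\ event outside of which $\rho^{(n)}_t \leq \rho^{(n)}_{\lceil t \rceil} \to 0$ for every $t \geq 0$. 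The right continuity of $Y$ then delivers $Y^{(n)}_t = Y_{t + \rho^{(n)}_t} \to Y_t$ for all $t \geq 0$, establishing \eqref{eq:719}. The main obstacle I anticipate is justifying rigorously that $|\{s : Y_s = 0\}| = 0$ a.s.\ in the pathological case $|p_4| = \infty$; although the comparison $Y \geq W^+$ makes the argument clean when $p_3 = 0$, careful bookkeeping of the subordinator jumps in \eqref{eq:316} and the subprocess killing by \eqref{eq:327} is needed to ensure that no zeros of $Y$ are introduced outside the zero set of $W^+$.
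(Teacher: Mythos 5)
Your proposal is correct and rests on the same two observations as the paper's proof: the discarded set satisfies $D^{(n)} \subset \{s : Y_s < \hat{c}_n\}$, and the combination of $p_3 = 0$ with the pathwise bound $Y_t \geq W^+_t$ (from \eqref{eq:316}) forces $|\{s : Y_s = 0\}| = 0$. The one genuine structural difference lies in how the moving window $[0, t + \rho^{(n)}_t]$ is controlled. You argue directly on $Y$ via a fixed-point lemma: since $r \mapsto |D^{(n)} \cap [0, t+r]|$ is $1$-Lipschitz, once it drops strictly below the diagonal at $r = \varepsilon$ it stays below, which simultaneously forces $\rho^{(n)}_t < \varepsilon$ and gives $\phi^{(n)}(t) = t + \rho^{(n)}_t < \infty$ (so $t$ lies within the lifetime of $Y^{(n)}$) for free. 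The paper instead first runs the argument for the reflecting Brownian motion $W^+$, exploits the conservativity of $W^{+,(n)}$ to get $\tilde{\rho}^{(0)}_t < \infty$, uses the monotonicity $\tilde{\rho}^{(n)}_t \downarrow$ to dominate the window by the fixed interval $[0, t + \tilde{\rho}^{(0)}_t]$, and then transfers to general $Y$ via the inclusion of discarding sets $D^{(n)} \subset \tilde{D}^{(n)}$, giving $\rho^{(n)}_t \leq \tilde{\rho}^{(n)}_t$. Your version avoids the auxiliary comparison process $W^{+,(n)}$ and is slightly more self-contained; the paper's version makes the finiteness and monotonicity issues explicit at the cost of an extra intermediary. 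One remark: the obstacle you flag at the end (rigorously showing $|\{Y_s = 0\}| = 0$ when $|p_4| = \infty$) is already disposed of by your own comparison. Formula \eqref{eq:316} gives $Z(Z^{-1}(\ell_t)) - \ell_t \geq 0$ pointwise, and the killing in \eqref{eq:327} only sends the path to $\partial$ and never manufactures new zeros, so $\{s < \zeta : Y_s = 0\} \subset \{s : W^+_s = 0\}$ holds without any further bookkeeping; the paper cites \cite[\S14]{IM63} for the same fact.
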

\begin{proof}
A crucial consequence of the assumption $p_3=0$ is that
\[
	\left| \{t\geq 0: Y_t=0\}\right|=0,\quad \bP_x\text{-a.s.},
\]
where $|\cdot|$ denotes the Lebesgue measure of the time set; see, e.g., \cite[\S14]{IM63}.  This can also be observed from the pathwise representation \eqref{eq:316},  as $Y_t=0$ always indicates $W^+_t=0$. 
Hence,  in principle,  we may repeat the proof of Theorem~\ref{THMA4} step by step to complete the proof.  Below, we provide a concise proof using the pathwise representation of $Y_t$.

For $t\geq 0$,  let $\rho^{(n)}_t(\omega)$ denote the total duration discarded from the path of $Y_\cdot(\omega)$ when constructing $Y^{(n)}_s$ for $0\leq s\leq t$.  Specifically, we have $Y^{(n)}_t(\omega)=Y_{{t+\rho^{(n)}_t}(\omega)}(\omega)$.   Since $\rho^{(n)}_{t'}(\omega)\leq \rho^{(n)}_t(\omega)$ for all $0\leq t'\leq t$,   it suffices to show
\begin{equation}\label{eq:616}
	\bP_x(\lim_{n\rightarrow \infty}\rho^{(n)}_t=0)=1
\end{equation}
for any fixed $t\geq 0$.  

Let us first consider the special case where $Y$ is the reflecting Brownian motion $W^+$.  To differentiate this specific case from the general one, we denote $\eta,\sigma^{(n)},\eta^{(n)}_m,\sigma^{(n)}_m,\rho^{(n)}_t$ by $\tilde\eta,\tilde\sigma^{(n)},\tilde\eta^{(n)}_m,\tilde\sigma^{(n)}_m,\tilde\rho^{(n)}_t$, respectively.  Clearly,  the instantaneous distribution of the approximating Doob's Brownian motion, denoted by $W^{+,(n)}$, is $\delta_{\hat{c}_n}$.  Since $W^{+,(n)}$ is conservative by the expression of its resolvent in \eqref{eq:325}, we have $\tilde{\rho}^{(n)}_t<\infty$ for all $n$.  Note that $\tilde\rho^{(n)}_t$ is decreasing as $n\rightarrow \infty$.  It follows that
\[
\begin{aligned}
	\tilde \rho^{(n)}_t(\omega)&\leq \left|\left\{0\leq s\leq t+\tilde{\rho}^{(n)}_t(\omega):W^+_s<\hat{c}_n\right\}\right| \\
	&\leq \left|\left\{0\leq s\leq t+\tilde{\rho}^{(0)}_t(\omega):W^+_s<\hat{c}_n\right\}\right|,
\end{aligned} \]
which implies that
\begin{equation}\label{eq:617}
\begin{aligned}
	\lim_{n\rightarrow \infty}\tilde \rho^{(n)}_t(\omega)&\leq \lim_{n\rightarrow \infty}\left|\left\{0\leq s\leq t+\tilde{\rho}^{(0)}_t(\omega):W^+_s<\hat{c}_n\right\}\right|\\
	&=\left|\left\{0\leq s\leq t+\tilde{\rho}^{(0)}_t(\omega):W^+_s=0\right\}\right|=0.
\end{aligned}\end{equation}

Now, consider a general Feller's Brownian motion $Y$.  According to its pathwise representation \eqref{eq:316} (with lifetime $\zeta$ given in \eqref{eq:326}),  $Y_t=0$ or $Y_{t-}=0$ always implies $W^+_t=0$. Since $Y_t\geq W^+_t$,  it follows that $\tilde{\sigma}^{(n)}\geq \sigma^{(n)}$.  (Recall that the notation with tilde is defined for $W^+$.) Consequently, each discarding interval $[\eta^{(n)}_m,\sigma^{(n)}_m)$ for $Y^{(n)}$ must be contained within some discarding interval for $W^{+,(n)}$ (though the converse is not necessarily true).  {\color{blue}In particular},  $\rho^{(n)}_t\leq \tilde{\rho}^{(n)}_t$.  Therefore,  \eqref{eq:616} follows from \eqref{eq:617}.
\end{proof}
\begin{remark}
If we do not assume that $p_3 = 0$, then the sequence of Doob's Brownian motions $Y^{(n)}$ will converge to a Feller's Brownian motion $\tilde Y$ with parameters $(p_1, p_2, 0, p_4)$ in the sense of \eqref{eq:719}. This is because, according to Lemma~\ref{LM82}, $Y$ can be obtained from $\tilde Y$,  with lifetime $\tilde{\zeta}$, through a time change transformation corresponding to a strictly increasing PCAF. Therefore,  they share the same sequence of distributions $\lambda^{(n)}$,  that is, $\bP_0(Y_{\sigma^{(n)}}\in \cdot)=\bP_0(\tilde Y_{\tilde\sigma^{(n)}}\in \cdot)$,  where $\tilde \sigma^{(n)}:=\inf\{t> 0: \tilde Y_t\geq \hat{c}_n\}\wedge \tilde\zeta$.
\end{remark}

\subsection{Representation of instantaneous distributions}

 The following lemma describes the relationship between the instantaneous distributions.

\begin{lemma}\label{LM63}
For every $m,n\in \bN$ with $m>n$,  the following relationship holds:
\begin{equation}\label{eq:68}
	\lambda^{(n)}=\frac{\frac{\int_{[\hat{c}_{m},\hat{c}_n)}x\lambda^{(m)}(\dd x)}{\hat{c}_n}\delta_{\hat{c}_n}+\lambda^{(m)}|_{[\hat{c}_n,\infty)\cup \{\partial\}}}{1-\int_{[\hat{c}_{m},\hat{c}_n)}\left(1-\frac{x}{\hat{c}_n}\right)\lambda^{(m)}(\dd x)}.
\end{equation}
\end{lemma}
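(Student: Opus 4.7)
The plan is to apply the strong Markov property of $Y$ twice and reduce the identity to a scalar linear equation for $\lambda^{(n)}(f)$. Since $m>n$ gives $\hat c_m<\hat c_n$, we have $\sigma^{(m)}\le\sigma^{(n)}$, and a direct inspection of the trajectories shows $Y_{\sigma^{(n)}}=Y_{\sigma^{(n)}}\circ\theta_{\sigma^{(m)}}$ on all of $\Omega$ (trivially when $\sigma^{(m)}=\sigma^{(n)}$, and by shift-additivity of the hitting time otherwise). Applying the strong Markov property of $Y$ at $\sigma^{(m)}$ under $\bP_0$, for any bounded Borel $f$ on $(0,\infty)\cup\{\partial\}$ with $f(\partial):=0$, gives
\[
\lambda^{(n)}(f)=\bE_0 f(Y_{\sigma^{(n)}})=\int \bE_y f(Y_{\sigma^{(n)}})\,\lambda^{(m)}(\dd y).
\]
For $y\in[\hat c_n,\infty)\cup\{\partial\}$ we have $\sigma^{(n)}=0$ under $\bP_y$ and hence $\bE_y f(Y_{\sigma^{(n)}})=f(y)$, producing the restriction $\lambda^{(m)}|_{[\hat c_n,\infty)\cup\{\partial\}}$ in the numerator of \eqref{eq:68}; the substantive range is thus $y\in[\hat c_m,\hat c_n)$.

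For $y\in[\hat c_m,\hat c_n)$, the defining property of a Feller's Brownian motion (Definition~\ref{DEF31}) says that $Y$ is a standard Brownian motion on $(0,\infty)$ up to $\tau_0=\inf\{t>0:Y_t=0\}$. Splitting at $\tau_0$,
\[
\bE_y f(Y_{\sigma^{(n)}})=\bE_y\bigl[f(Y_{\sigma^{(n)}});\sigma^{(n)}<\tau_0\bigr]+\bE_y\bigl[f(Y_{\sigma^{(n)}});\sigma^{(n)}>\tau_0\bigr].
\]
On $\{\sigma^{(n)}<\tau_0\}$, path continuity on $[0,\tau_0)$ forces $Y_{\sigma^{(n)}}=\hat c_n$, and the classical Brownian gambler's ruin formula gives $\bP_y(\sigma^{(n)}<\tau_0)=y/\hat c_n$; on $\{\sigma^{(n)}>\tau_0\}$, $Y_{\tau_0}=0$, and a second application of the strong Markov property at $\tau_0$ yields $\bE_y[f(Y_{\sigma^{(n)}});\sigma^{(n)}>\tau_0]=(1-y/\hat c_n)\,\lambda^{(n)}(f)$. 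Combining,
\[
\bE_y f(Y_{\sigma^{(n)}})=\frac{y}{\hat c_n}\,f(\hat c_n)+\Bigl(1-\frac{y}{\hat c_n}\Bigr)\lambda^{(n)}(f),\qquad y\in[\hat c_m,\hat c_n).
\]

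Substituting this back into the first display and collecting the $\lambda^{(n)}(f)$ terms yields
\[
\lambda^{(n)}(f)\biggl[1-\int_{[\hat c_m,\hat c_n)}\Bigl(1-\frac{y}{\hat c_n}\Bigr)\lambda^{(m)}(\dd y)\biggr]=\frac{f(\hat c_n)}{\hat c_n}\int_{[\hat c_m,\hat c_n)}y\,\lambda^{(m)}(\dd y)+\int_{[\hat c_n,\infty)\cup\{\partial\}}f(y)\,\lambda^{(m)}(\dd y),
\]
which is precisely \eqref{eq:68} tested against $f$, provided the bracketed factor on the left is nonzero; but the integrand $1-y/\hat c_n$ is bounded above by $1-\hat c_m/\hat c_n<1$, so the factor is strictly positive and the division is justified. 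The only mildly delicate point I anticipate is the bookkeeping for the cemetery: the atom of $\lambda^{(m)}$ at $\partial$ (if nonzero) must be handled in the first case $y\in[\hat c_n,\infty)\cup\{\partial\}$, for which $\bE_\partial f(Y_{\sigma^{(n)}})=0=f(\partial)$ by convention, so it is absorbed cleanly into the restriction $\lambda^{(m)}|_{[\hat c_n,\infty)\cup\{\partial\}}$.
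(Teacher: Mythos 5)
Your proof is correct and takes essentially the same route as the paper: condition on $Y_{\sigma^{(m)}}$ via the strong Markov property, note that $\bE_y f(Y_{\sigma^{(n)}})=f(y)$ for $y\geq\hat c_n$ or $y=\partial$, and for $y\in[\hat c_m,\hat c_n)$ use the gambler's-ruin split at the first hitting of $0$ (the paper writes $\eta=\inf\{t>0:Y_{t-}=0\}$ rather than $\tau_0$, but as it itself notes, the two coincide for a Feller's Brownian motion by quasi-left-continuity). The only additions you make beyond the paper's argument are the explicit check that the denominator is strictly positive and the remark on absorbing the $\partial$-atom into the restriction, both of which are correct and harmless.
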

\begin{proof}
Take a positive and bounded Borel measurable function $f$ on $(0,\infty)$.  Note that $Y_{\sigma^{(n)}}=Y_{\sigma^{(n)}}\circ \theta_{\sigma^{(m)}}$,  since $\sigma^{(n)}\geq \sigma^{(m)}$.  Using the strong Markov property of $Y$, we have
\begin{equation}\label{eq:66}
\begin{aligned}
	\lambda^{(n)}(f)=\bE_0 f(Y_{\sigma^{(n)}}\circ \theta_{\sigma^{(m)}})=\bE_0 \left(\bE_{Y_{\sigma^{(m)}}}f(Y_{\sigma^{(n)}})\right).
\end{aligned}\end{equation}
For $y\geq \hat{c}_n$,  it holds $\bP_y$-a.s. that $Y_{\sigma^{(n)}}=Y_0=y$.  Thus,
\begin{equation}\label{eq:67}
\begin{aligned}
\bE_0 \left(\bE_{Y_{\sigma^{(m)}}}f(Y_{\sigma^{(n)}}); Y_{\sigma^{(m)}} \geq \hat{c}_n\right)&=\bE_0\left(f(Y_{\sigma^{(m)}}); Y_{\sigma^{(m)}} \geq \hat{c}_n\right)\\&=\int_{[\hat{c}_n,\infty)}f(x)\lambda^{(m)}(\dd x).
\end{aligned}\end{equation}
For $\hat{c}_{m}\leq y <\hat{c}_n$,  we have
\begin{equation}\label{eq:63}
	\bE_y f(Y_{\sigma^{(n)}})=\bE_y \left(f(Y_{\sigma^{(n)}}); \sigma^{(n)}<\eta\right)+\bE_y \left(f(Y_{\sigma^{(n)}}); \sigma^{(n)}>\eta\right).
\end{equation}
Since $Y_{\sigma^{(n)}}=\hat{c}_n$ on $\{\sigma^{(n)}<\eta\}$, it follows that
\begin{equation}\label{eq:64}
\bE_y \left(f(Y_{\sigma^{(n)}}); \sigma^{(n)}<\eta\right)=f(\hat{c}_n)\cdot \frac{y}{\hat{c}_n}.
\end{equation}
Using $Y_{\sigma^{(n)}}=Y_{\sigma^{(n)}}\circ \theta_\eta$ on $\{\sigma^{(n)}>\eta\}$ and applying the strong Markov property of $Y$,  we get
\begin{equation}\label{eq:65}
\begin{aligned}
	\bE_y \left(f(Y_{\sigma^{(n)}}); \sigma^{(n)}>\eta\right)&=\bE_y \left( \bE_{Y_\eta} f(Y_{\sigma^{(n)}}); \sigma^{(n)}>\eta\right)\\
	&=\bE_0f(Y_{\sigma^{(n)}})\cdot \bE_y(\sigma^{(n)}>\eta) \\
	&=\lambda^{(n)}(f)\cdot \left(1-\frac{y}{\hat{c}_n}\right).
\end{aligned}\end{equation}
Substituting \eqref{eq:64} and \eqref{eq:65} into \eqref{eq:63} yields
\[
\bE_y f(Y_{\sigma^{(n)}})=f(\hat{c}_n)\cdot \frac{y}{\hat{c}_n}+\lambda^{(n)}(f)\cdot \left(1-\frac{y}{\hat{c}_n}\right),\quad \forall \hat{c}_{m}\leq y<\hat{c}_n.
\]
Therefore,
\[
\begin{aligned}
\bE_0& \left(\bE_{Y_{\sigma^{(m)}}}f(Y_{\sigma^{(n)}}); \hat
c_{m}\leq Y_{\sigma^{(m)}} <\hat{c}_n\right)\\&=\bE_0\left(f(\hat{c}_n)\cdot \frac{Y_{\sigma^{(m)}}}{\hat{c}_n}+\lambda^{(n)}(f)\cdot \left(1-\frac{Y_{\sigma^{(m)}}}{\hat{c}_n}\right) ; \hat
c_{m}\leq Y_{\sigma^{(m)}} <\hat{c}_n\right)\\&=\int_{[\hat{c}_{m},\hat c_n)}\left(f(\hat{c}_n)\cdot \frac{y}{\hat{c}_n}+\lambda^{(n)}(f)\cdot \left(1-\frac{y}{\hat{c}_n}\right)\right)\lambda^{(m)}(\dd y).
\end{aligned}\]
Combining this with \eqref{eq:66} and \eqref{eq:67},  we can verify \eqref{eq:68}. 
\end{proof}

Based on this lemma, we can define a measure $\lambda$ on $(0,\infty)\cup \{\partial\}$ as follows:
\begin{equation}\label{eq:623}
	\lambda|_{(\hat c_n,\infty)\cup \{\partial\}}:=\frac{\lambda^{(n)}|_{(\hat c_n,\infty)\cup \{\partial\}}}{\Lambda_n},\quad n\in \bN,
\end{equation}
where $\Lambda_n:= 1-\int_{[\hat{c}_{n},\hat{c}_0)}\left(1-\frac{x}{\hat{c}_0}\right)\lambda^{(n)}(\dd x)$.  {\color{blue}A straightforward computation shows that} $\lambda$ is a well-defined $\sigma$-finite measure on $(0,\infty)\cup \{\partial\}$.  The following lemma provides the representation of $\lambda^{(n)}$ in terms of this measure $ \lambda$.

\begin{lemma}\label{LM64}
For each $n\in \bN$,  the following holds:
\begin{equation}\label{eq:624}
	\frac{1}{\Lambda_n}=\lambda\left((\hat c_n,\infty) \cup \{\partial\}\right)+\frac{\hat c_0\left(1-\lambda\left((\hat c_0,\infty) \cup \{\partial\}\right) \right)-\int_{(\hat c_n,\hat c_0]}x\lambda(\dd x)}{\hat c_n},
\end{equation}
and
\begin{equation}\label{eq:622}
\begin{aligned}
	&\lambda^{(n)}|_{(\hat c_n,\infty)\cup \{\partial\}}=\Lambda_n \cdot \lambda|_{(\hat c_n,\infty)\cup \{\partial\}} \\
	&\lambda^{(n)}(\{\hat c_n\})=\frac{\Lambda_n}{\hat c_n}\left(\hat c_0\left(1-\lambda\left((\hat c_0,\infty) \cup \{\partial\}\right) \right)-\int_{(\hat c_n,\hat c_0]}x\lambda(\dd x) \right).
\end{aligned}
\end{equation}
{\color{blue}In particular},
\begin{equation}\label{eq:621}
	\int_{(0,\infty)}\left(1\wedge x\right) \lambda(\dd x)<\infty.
\end{equation}
\end{lemma}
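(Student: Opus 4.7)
The plan is threefold: deduce \eqref{eq:622} by unfolding the definition of $\Lambda_n$; obtain \eqref{eq:624} as the probability-conservation identity for $\lambda^{(n)}$ combined with \eqref{eq:622}; and conclude \eqref{eq:621} from the non-negativity implicit in the second equation of \eqref{eq:622}.

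The first equation of \eqref{eq:622} is a direct restatement of the defining relation \eqref{eq:623}. For the second equation, I would unpack the definition of $\Lambda_n$ by splitting integration over $[\hat c_n, \hat c_0)$ into the point mass at $\hat c_n$ and the part on $(\hat c_n, \hat c_0)$, rewriting
\[
\Lambda_n = \lambda^{(n)}([\hat c_0, \infty) \cup \{\partial\}) + \hat c_0^{-1}\int_{[\hat c_n, \hat c_0)} x\, \lambda^{(n)}(\dd x).
\]
Applying the first equation of \eqref{eq:622} to replace every occurrence of $\lambda^{(n)}$ on sets disjoint from $\{\hat c_n\}$ by $\Lambda_n \lambda$, and then solving for $\lambda^{(n)}(\{\hat c_n\})$, yields the stated formula after merging the term $\hat c_0 \lambda(\{\hat c_0\}) = \int_{\{\hat c_0\}} x\, \lambda(\dd x)$ with $\int_{(\hat c_n, \hat c_0)} x\, \lambda(\dd x)$ into $\int_{(\hat c_n, \hat c_0]} x\, \lambda(\dd x)$, and using $\lambda([\hat c_0, \infty) \cup \{\partial\}) = \lambda(\{\hat c_0\}) + \lambda((\hat c_0, \infty) \cup \{\partial\})$ to replace the closed endpoint by an open one.

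Identity \eqref{eq:624} then follows immediately: since $\lambda^{(n)}$ is a probability measure supported on $[\hat c_n, \infty) \cup \{\partial\}$, the first equation of \eqref{eq:622} forces $\lambda^{(n)}(\{\hat c_n\}) = 1 - \Lambda_n \lambda((\hat c_n, \infty) \cup \{\partial\})$, and equating this with the formula just derived and dividing by $\Lambda_n > 0$ gives \eqref{eq:624}. For the integrability assertion \eqref{eq:621}, the non-negativity $\lambda^{(n)}(\{\hat c_n\}) \geq 0$ in the second equation of \eqref{eq:622} yields the uniform-in-$n$ bound $\int_{(\hat c_n, \hat c_0]} x\, \lambda(\dd x) \leq \hat c_0 (1 - \lambda((\hat c_0, \infty) \cup \{\partial\})) \leq \hat c_0$. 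Letting $n \to \infty$ by monotone convergence gives $\int_{(0, \hat c_0]} x\, \lambda(\dd x) \leq \hat c_0$, and since $\Lambda_0 = 1$ makes $\lambda|_{(\hat c_0, \infty) \cup \{\partial\}}$ coincide with the subprobability $\lambda^{(0)}|_{(\hat c_0, \infty) \cup \{\partial\}}$, one concludes $\int_{(0, \infty)}(1 \wedge x)\lambda(\dd x) \leq \hat c_0 + 1 < \infty$. The only genuine obstacle is organizational: carefully merging the endpoint mass at $\hat c_0$ into the integral limit so that $[\hat c_0, \infty)$ becomes $(\hat c_0, \infty)$ and $(\hat c_n, \hat c_0)$ becomes $(\hat c_n, \hat c_0]$; once this bookkeeping is set up properly, everything else is routine.
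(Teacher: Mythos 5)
Your argument is correct, but it follows a genuinely different route from the paper's. The paper invokes Lemma~\ref{LM63} with $m=n+1$ together with the consistency of \eqref{eq:623} to obtain a one-step recursion, introduces the auxiliary sequence $h_n=\hat c_n\lambda^{(n)}(\{\hat c_n\})/\Lambda_n$, shows $h_n-h_{n+1}=\int_{(\hat c_{n+1},\hat c_n]}x\,\lambda(\dd x)$, and then telescopes and passes to the limit to get \eqref{eq:621} and the closed form \eqref{eq:622} by induction. You instead bypass the recursion entirely: you expand the \emph{definition} of $\Lambda_n$ directly, use that $\lambda^{(n)}$ is a probability supported on $[\hat c_n,\infty)\cup\{\partial\}$ to rewrite $\Lambda_n=\lambda^{(n)}([\hat c_0,\infty)\cup\{\partial\})+\hat c_0^{-1}\int_{[\hat c_n,\hat c_0)}x\,\lambda^{(n)}(\dd x)$, then substitute via \eqref{eq:623} on the parts of the integral away from $\{\hat c_n\}$ and solve algebraically for $\lambda^{(n)}(\{\hat c_n\})$; \eqref{eq:624} then drops out of probability conservation, and \eqref{eq:621} from $\lambda^{(n)}(\{\hat c_n\})\geq 0$ plus monotone convergence. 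Your derivation is shorter and avoids the induction, at the cost of relying a little more heavily on the (already-assumed) well-definedness of $\lambda$ -- but the paper relies on the same fact, just less visibly, through the $\Lambda_n/\Lambda_{n+1}$ ratio appearing in its rewriting of Lemma~\ref{LM63}. One small point: your unpacking of $\Lambda_n$ produces a tautology when $n=0$ (since $[\hat c_0,\hat c_0)=\emptyset$), so the $n=0$ instance of the second identity in \eqref{eq:622} should be verified separately by the probability-conservation argument you already use for \eqref{eq:624}; this is a one-line check, not a gap.
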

\begin{proof}
According to Lemma~\ref{LM63}, we have
\[
	\lambda^{(n)}(\{\hat c_n\})=\frac{\Lambda_n}{\Lambda_{n+1}}\cdot \left(\int_{[\hat c_{n+1},\hat c_n)}\frac{x}{\hat c_n}\lambda^{(n+1)}(\dd x)+\lambda^{(n+1)}(\{\hat c_n\})\right).
\]
It follows that
\[
\begin{aligned}
\frac{\hat c_n \lambda^{(n)}(\{\hat c_n\})}{\Lambda_n}&= \frac{\hat c_{n+1} \lambda^{(n+1)}(\{\hat c_{n+1}\})}{\Lambda_{n+1}}+ \frac{\int_{(\hat c_{n+1},\hat c_n)}x \lambda^{(n+1)}(\dd x)+\hat c_n\lambda^{(n+1)}(\{\hat c_n\})}{\Lambda_{n+1}} \\
&= \frac{\hat c_{n+1} \lambda^{(n+1)}(\{\hat c_{n+1}\})}{\Lambda_{n+1}}+ \int_{(\hat c_{n+1},\hat c_n]}x \lambda (\dd x).
\end{aligned}
\]
Let $h_n:= \frac{\hat c_n \lambda^{(n)}(\{\hat c_n\})}{\Lambda_n}$. Then $h_n$ is decreasing in $n$, and
\[
h_0-\lim_{n\rightarrow \infty}h_n=\sum_{n=0}(h_n-h_{n+1})=\sum_{n=0}^\infty \int_{(\hat c_{n+1},\hat c_n]}x\lambda(\dd x)=\int_{(0,\hat c_0]}x \lambda(\dd x).
\]
{\color{blue}In particular}, \eqref{eq:621} holds true. 
The induction also yields that
\[
h_n= h_{n+m}+ \int_{(\hat c_{n+m},\hat c_n]}x \lambda (\dd x),\quad \forall m\geq 1,
\]
and by letting $m\rightarrow \infty$,  we obtain
\begin{equation}\label{eq:627}
	h_n=\lim_{m\rightarrow \infty}h_m+\int_{(0,\hat c_n]}x\lambda(\dd x)=h_0-\int_{(\hat c_n,\hat c_0]}x\lambda(\dd x).
\end{equation}
Therefore,  the {\color{blue}second} identity in \eqref{eq:622} is established. The {\color{blue}first} identity in \eqref{eq:622} follows directly from \eqref{eq:623}. Finally, substituting \eqref{eq:622} into the definition of $\Lambda_n$, we obtain \eqref{eq:624}. 
\end{proof}

\subsection{Parameters of Feller's Brownian motion}

The aim of this subsection is to express the parameters of the original Feller's Brownian motion in terms of the measure $\lambda$, which is defined based on the approximating Doob's Brownian motions.

\begin{theorem}\label{THM65}
Let $Y$ be a Feller's Brownian motion with parameters $(p_1, p_2, 0, p_4)$, and let $\lambda$ be the measure \eqref{eq:623} on $(0,\infty)\cup \{\partial\}$ in terms of the approximating Doob's Brownian motions $Y^{(n)}$. Then, up to a certain multiplicative constant needed to satisfy \eqref{eq:330}, it holds that
\begin{equation}\label{eq:626}
	p_1=\lambda(\{\partial\}),\quad p_2=\hat c_0\left(1-\lambda\left(\{\partial \}\right)\right)-\int_{(0,\infty)}\left(x\wedge \hat c_0\right)\lambda(\dd x),\quad p_4=\lambda|_{(0,\infty)}.
\end{equation}
\end{theorem}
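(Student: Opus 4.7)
The strategy is to pass to the limit $n \to \infty$ in the explicit resolvent identity \eqref{eq:625} for the approximating Doob's Brownian motions $Y^{(n)}$ and match the result against the resolvent of $Y$ given by \eqref{eq:39} (with $p_3 = 0$). Fix $\alpha > 0$ and $f \in C_b((0,\infty))$, extended by $f(\partial) := 0$. Theorem~\ref{THM62} together with bounded convergence yields $G^{(n)}_\alpha f(x) \to G_\alpha f(x)$ for every $x \in (0,\infty)$. Subtracting $G^0_\alpha f(x)$ from both \eqref{eq:625} and \eqref{eq:33} and dividing by $e^{-\sqrt{2\alpha}x} > 0$, one obtains
\[
G_\alpha f(0) = \lim_{n \to \infty} \frac{\int_{(0,\infty)} G^0_\alpha f(y)\, \lambda^{(n)}(\dd y)}{1 - \bE_{\lambda^{(n)}} e^{-\alpha \eta}}.
\]

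Next, I would use Lemma~\ref{LM64} to rewrite both numerator and denominator in terms of the limiting measure $\lambda$. Setting $K_n := \hat c_0(1 - \lambda((\hat c_0,\infty) \cup \{\partial\})) - \int_{(\hat c_n, \hat c_0]} x\, \lambda(\dd x)$, the integrability \eqref{eq:621} gives $K_n \to K := \hat c_0(1 - \lambda(\{\partial\})) - \int_{(0,\infty)}(x \wedge \hat c_0)\,\lambda(\dd x)$. Splitting $\lambda^{(n)}$ via \eqref{eq:622} and dividing through by $\Lambda_n$, the denominator becomes
\[
\lambda(\{\partial\}) + \frac{1 - e^{-\sqrt{2\alpha}\hat c_n}}{\hat c_n}\Bigl(K_n + \int_{(0,\hat c_n]} x\,\lambda(\dd x)\Bigr) + \int_{(\hat c_n,\infty)} (1 - e^{-\sqrt{2\alpha}x})\,\lambda(\dd x),
\]
which tends to $\lambda(\{\partial\}) + \sqrt{2\alpha}\, K + \int_{(0,\infty)} (1 - e^{-\sqrt{2\alpha}x})\,\lambda(\dd x)$, since $(1-e^{-\sqrt{2\alpha}\hat c_n})/\hat c_n \to \sqrt{2\alpha}$, $\int_{(0,\hat c_n]} x\,\lambda(\dd x) \to 0$, and the remaining integral converges by dominated convergence thanks to \eqref{eq:621}. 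Similarly, the numerator divided by $\Lambda_n$ equals
\[
\int_{(\hat c_n,\infty)} G^0_\alpha f(y)\,\lambda(\dd y) + \frac{G^0_\alpha f(\hat c_n)}{\hat c_n}\Bigl(K_n + \int_{(0,\hat c_n]} x\,\lambda(\dd x)\Bigr),
\]
and converges to $\int_{(0,\infty)} G^0_\alpha f(y)\,\lambda(\dd y) + 2K \int_0^\infty e^{-\sqrt{2\alpha}y} f(y)\,\dd y$: the first term by dominated convergence using the estimate $|G^0_\alpha f(y)| \leq C(1 \wedge y)$ (which itself follows from $G^0_\alpha f(y) \leq \|f\|_\infty(1-e^{-\sqrt{2\alpha}y})/\alpha$) combined with \eqref{eq:621}, and the second by $G^0_\alpha f(0) = 0$ together with the one-sided differentiability \eqref{eq:315}, which gives $G^0_\alpha f(\hat c_n)/\hat c_n \to (G^0_\alpha f)'(0^+) = 2\int_0^\infty e^{-\sqrt{2\alpha}y} f(y)\,\dd y$.

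Taking the quotient yields
\[
G_\alpha f(0) = \frac{2K \int_0^\infty e^{-\sqrt{2\alpha}y} f(y)\,\dd y + \int_{(0,\infty)} G^0_\alpha f(y)\,\lambda(\dd y)}{\lambda(\{\partial\}) + \sqrt{2\alpha}\, K + \int_{(0,\infty)} (1 - e^{-\sqrt{2\alpha}x})\,\lambda(\dd x)}.
\]
Comparing this with the expression \eqref{eq:39} for all $\alpha > 0$ and all $f \in C_b((0,\infty))$, and invoking the uniqueness of the representing parameters from Theorem~\ref{THM32} (up to the normalization \eqref{eq:330}), we conclude that $(\lambda(\{\partial\}), K, \lambda|_{(0,\infty)})$ is a positive multiple of $(p_1, p_2, p_4)$. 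This is precisely \eqref{eq:626}.

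The chief technical obstacle is the boundary term $G^0_\alpha f(\hat c_n) \cdot K_n/\hat c_n$: one factor tends to $0$ while the other diverges (when $p_2 > 0$), and the product has a finite, nontrivial limit only because $G^0_\alpha f$ vanishes linearly at the origin with slope exactly $2\int_0^\infty e^{-\sqrt{2\alpha}y} f(y)\,\dd y$, which is the very quantity multiplying $2p_2$ in \eqref{eq:39}. The integrability bound \eqref{eq:621} is crucial throughout to control the tail contributions of $\lambda$ near $0$.
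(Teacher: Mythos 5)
Your proposal is correct and follows essentially the same route as the paper: substitute Lemma~\ref{LM64} into the resolvent formula \eqref{eq:625} for $Y^{(n)}$, pass to the limit using $(1-e^{-\sqrt{2\alpha}\hat c_n})/\hat c_n\to\sqrt{2\alpha}$, $G^0_\alpha f(\hat c_n)/\hat c_n\to 2\int_0^\infty e^{-\sqrt{2\alpha}y}f(y)\,\dd y$, and \eqref{eq:621}, identify the limit with $G_\alpha f$ via Theorem~\ref{THM62}, and invoke the uniqueness in Proposition~\ref{PROA2}. Your intermediate step of isolating $G_\alpha f(0)$ by dividing out $e^{-\sqrt{2\alpha}x}$ and your explicit dominated-convergence bound $|G^0_\alpha f(y)|\le C(1\wedge y)$ are minor variations in presentation rather than a different argument.
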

\begin{proof}
Define $\tilde p^n_2:=h_n=\hat c_0 \lambda^{(0)}(\{\hat c_0\})-\int_{(\hat c_n,\hat c_0]}x\lambda (\dd x)$ for $n\in \bN$ (see \eqref{eq:627}). This sequence is decreasing and converges to
\begin{equation}\label{eq:718}
\tilde{p}_2:= \hat c_0\left(1-\lambda\left(\{\partial \}\right)\right)-\int_{(0,\infty)}\left(x\wedge \hat c_0\right)\lambda(\dd x).
\end{equation}
According to \eqref{eq:625} and Lemma~\ref{LM64}, the resolvent of $Y^{(n)}$ can be expressed as
\[
\begin{aligned}
	G^{(n)}_\alpha& f(x)\\&=G^0_\alpha f(x)+\bE_x e^{-\alpha \eta} \cdot \frac{\int_{(\hat c_n, \infty)}G^0_\alpha f(x)\lambda(\dd x)+\tilde p^n_2\cdot \frac{G^0_\alpha f(\hat c_n)}{\hat c_n}}{\frac{1}{\Lambda_n}-\int_{(\hat c_n, \infty)}e^{-\sqrt{2\alpha}x}\lambda (\dd x)-\tilde p^n_2 \cdot \frac{e^{-\sqrt{2\alpha}\hat c_n}}{\hat c_n}}\\
	&= G^0_\alpha f(x)+\bE_x e^{-\alpha \eta} \cdot \frac{\int_{(\hat c_n, \infty)}G^0_\alpha f(x)\lambda(\dd x)+\tilde p^n_2\cdot \frac{G^0_\alpha f(\hat c_n)}{\hat c_n}}{\lambda(\{\partial\})+\int_{(\hat c_n, \infty)}(1-e^{-\sqrt{2\alpha}x})\lambda (\dd x)+\tilde p^n_2 \cdot \frac{1-e^{-\sqrt{2\alpha}\hat c_n}}{\hat c_n}}
\end{aligned}\] 
for all {\color{blue}$f\in C_0([0,\infty))$} and $x\in (0,\infty)$. {\color{blue}Since $f\in C_0([0,\infty))$, a computation yields that}
\[
	\lim_{n\rightarrow \infty} \frac{G^0_\alpha f(\hat c_n)}{\hat c_n}=\lim_{x\rightarrow 0} \frac{G^0_\alpha f(x)}{x}=2\int_{(0,\infty)}e^{-\sqrt{2\alpha}x}f(x)\dd x
\]
{\color{blue}by virtue of $G^0_\alpha f(0)=0$, \eqref{eq:315} and L'H\^opital's rule.}
Additionally,
\[
	\lim_{n\rightarrow \infty} \frac{1-e^{-\sqrt{2\alpha}\hat c_n}}{\hat c_n}=\sqrt{2\alpha}. 
\]
Given \eqref{eq:621},  we have
\[
\begin{aligned}
	\lim_{n\rightarrow \infty}& G^{(n)}_\alpha f(x) \\
	&= G^0_\alpha f(x)+\bE_x e^{-\alpha \eta} \cdot \frac{\int_{(0, \infty)}G^0_\alpha f(x)\lambda(\dd x)+2\tilde p_2\cdot \int_{(0,\infty)}e^{-\sqrt{2\alpha}x}f(x)\dd x}{\lambda(\{\partial\})+\int_{(0, \infty)}(1-e^{-\sqrt{2\alpha}x})\lambda (\dd x)+\tilde p_2 \cdot \sqrt{2\alpha}}.
\end{aligned}\]
By Theorem~\ref{THM62}, this limit is exactly the resolvent of $Y$. Comparing this expression with \eqref{eq:39} {\color{blue}and noting the uniqueness result established in Proposition~\ref{PROA2}}, we can eventually derive \eqref{eq:626}.
\end{proof}

\section{Proof of Theorem~\ref{MainTHM} for $|p_4|=\infty$}


{\color{blue}
In this section, we prove Theorem~\ref{MainTHM} in the case where $|p_4|=\infty$. To this end, we begin by recalling Wang’s approximation of birth-death processes, which serves as a foundational prototype for the results established in the preceding section.

\subsection{Wang's approximation of birth-death process}

Given a Feller $Q$-process $X$,
we aim to construct  a sequence of Doob processes $X^{(n)}$  following the same procedure outlined in Lemma~\ref{LM61}.  
Let us consider the corresponding $Q$-process $\hat{X}_t=\Xi^{-1}(X_t)$ on $\overline{E}$,  with lifetime denoted by $\hat{\zeta}$.  Define $$\hat{\eta}:=\inf\{t>0: \hat{X}_{t-}=0\}$$ and $$\hat\sigma^{(n)}:=\inf\left\{t>0: \hat{X}_t\in \{\hat c_0,\cdots,\hat{c}_n\}\right\}\wedge \hat\zeta,\quad n\in \bN.$$ 
Analogous sequences of stopping times $\{\hat{\eta}^{(n)}_m: m\geq 1\}$ and $\{\hat{\sigma}^{(n)}_m:m\geq 1\}$ can be defined by repeating the procedures used before Lemma~\ref{LM71} (see also \cite[\S6.3]{WY92}).  Applying the $C(\hat\eta^{(n)}_m,\hat\sigma^{(n)}_m)$-transformation to $\hat{X}$ yields a sequence of Doob processes $\hat{X}^{(n)}$ on $E$,  with instantaneous distribution 
$\hat{\lambda}^{(n)}(\cdot)=\bP_0(\hat{X}_{\hat{\sigma}^{(n)}}\in \cdot)$.

The main result of {\color{blue}\cite[Chapter 6]{WY92}} establishes the convergence of $\hat X^{(n)}$ to $\hat X$ in the following sense. 

\begin{theorem}\label{THMA4}
Let $X$ be a Feller $Q$-process 
and let $X^{(n)}=\Xi(\hat{X}^{(n)})$ denote the approximating sequence of Doob processes. 
Then, for any $i\in \bN$,
\begin{equation}\label{eq:612}
	\bP_i\left(\lim_{n\rightarrow \infty}X^{(n)}_t=X_t,\forall t\geq 0\right)=1.
\end{equation}
\end{theorem}

The original proof in \cite{WY92} is quite lengthy and only addresses the honest case. We will provide an alternative proof using the right continuity of $X$ in Appendix~\ref{APPA}. 

\subsection{No sojourn case $p_3=0$}\label{SEC81}

 In this subsection, we consider the case where $|p_4|=\infty$ and further assume  that $p_3=0$. According to Theorem~\ref{THM62}, we can construct a sequence of Doob's Brownian motions $Y^{(n)}$ that approximates the Feller's Brownian motion $Y$. Their instantaneous distributions $\lambda^{(n)}$ are characterized in Lemma~\ref{LM64} in terms of the measure $\lambda$ defined by \eqref{eq:623}. 
Regarding the time-changed Feller's Brownian motion $\hat X:=\check Y$, we can also construct a sequence of Doob processes $\hat X^{(n)}$ with instantaneous distribution 
$\hat{\lambda}^{(n)}$, as described in Theorem~\ref{THMA4},  that approximates $\hat X$.}

\begin{lemma}\label{LM81}
For all $n\in \bN$, the following holds:
\[
\begin{aligned}
	&\hat\lambda^{(n)}(\{\hat c_0\})=\Lambda_n \cdot\left(\int_{(\hat c_1, \hat c_0]}\frac{x-\hat c_1}{\hat c_0-\hat c_1}\lambda(\dd x) +\lambda\left((\hat c_0,\infty) \right) \right),\\
	&\hat \lambda^{(n)}(\{\hat c_i\}) =\Lambda_n \cdot\left(\int_{(\hat c_{i+1}, \hat c_i]}\frac{x-\hat c_{i+1}}{\hat c_i-\hat c_{i+1}}\lambda(\dd x) + \int_{(\hat c_{i}, \hat c_{i-1})}\frac{\hat c_{i-1}-x}{\hat c_{i-1}-\hat c_{i}}\lambda(\dd x) \right)
\end{aligned}
\]
for $1\leq i\leq n-1$, 
and 
\[
\hat \lambda^{(n)}(\{\hat c_n\}) =\Lambda_n \cdot\left(\frac{\tilde p^n_2}{\hat c_n}+\int_{(\hat c_n, \hat c_{n-1})}\frac{\hat c_{n-1}-x}{\hat c_{n-1}-\hat c_n}\lambda(\dd x)\right),
\]
where $\tilde p^n_2=\hat c_0 \lambda^{(0)}(\{\hat c_0\})-\int_{(\hat c_n,\hat c_0]}x\lambda (\dd x)$.
\end{lemma}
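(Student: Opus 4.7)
My plan is to compute $\hat\lambda^{(n)}(\{\hat c_i\})$ by a single application of the strong Markov property of $Y$ at $\sigma^{(n)}$, thereby avoiding any recursion or renewal argument. The starting point is to recognize that, because $\hat X=\check Y$ is the time-change of $Y$ by the PCAF~\eqref{eq:41} whose fine support contains every $\hat c_j$, the trajectory of $\hat X$ visits a state $\hat c_j\in E$ precisely at the (re-parametrized) moments when $Y$ does. Mimicking the argument $\check\eta_n=A_{\eta_n}$ carried out in the proof of Theorem~\ref{THM42}, one obtains $\hat X_{\hat\sigma^{(n)}}=Y_{\tau^{(n)}}$, where $\tau^{(n)}:=\inf\{t>0:Y_t\in\{\hat c_0,\dots,\hat c_n\}\}\wedge\zeta$. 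Consequently, $\hat\lambda^{(n)}(\{\hat c_i\})=\bP_0(Y_{\tau^{(n)}}=\hat c_i)$.

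Since $\{\hat c_0,\dots,\hat c_n\}\subset[\hat c_n,\infty)$ and $Y_0=0$, we have $\tau^{(n)}\geq\sigma^{(n)}$, and the strong Markov property gives
\[
\bP_0\bigl(Y_{\tau^{(n)}}=\hat c_i\bigr)=\int_{[\hat c_n,\infty)\cup\{\partial\}}\bP_y\bigl(Y_{\tilde\tau^{(n)}}=\hat c_i\bigr)\,\lambda^{(n)}(\dd y),
\]
where $\tilde\tau^{(n)}:=\inf\{t\geq 0:Y_t\in\{\hat c_0,\dots,\hat c_n\}\}$. Under $\bP_y$ with $y>0$, the process $Y$ is killed Brownian motion on $(0,\infty)$ by Definition~\ref{DEF31}, and the crucial simplification is that, for every $y\geq\hat c_n$, this Brownian motion hits $\{\hat c_0,\dots,\hat c_n\}$ before returning to $0$ almost surely, so no recursion is needed. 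The remaining hitting probabilities are classical: $\bP_y(Y_{\tilde\tau^{(n)}}=\hat c_0)=1$ when $y>\hat c_0$, $\bP_{\hat c_j}(Y_{\tilde\tau^{(n)}}=\hat c_j)=1$, and on each open interval $(\hat c_{j+1},\hat c_j)$ with $0\leq j\leq n-1$ the exit distribution is the linear split $(y-\hat c_{j+1})/(\hat c_j-\hat c_{j+1})$ at $\hat c_j$ and $(\hat c_j-y)/(\hat c_j-\hat c_{j+1})$ at $\hat c_{j+1}$.

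It then remains to substitute the description of $\lambda^{(n)}$ from Lemma~\ref{LM64}, namely $\lambda^{(n)}|_{(\hat c_n,\infty)\cup\{\partial\}}=\Lambda_n\lambda|_{(\hat c_n,\infty)\cup\{\partial\}}$ together with the atom $\lambda^{(n)}(\{\hat c_n\})=\Lambda_n\tilde p^n_2/\hat c_n$. Decomposing $[\hat c_n,\infty)=\{\hat c_n\}\cup\bigcup_{j=0}^{n-1}(\hat c_{j+1},\hat c_j]\cup(\hat c_0,\infty)$ and absorbing any atom of $\lambda$ at $\hat c_j$ ($0\leq j<n$) into the half-open piece $(\hat c_{j+1},\hat c_j]$ (legitimate since the weight $(y-\hat c_{j+1})/(\hat c_j-\hat c_{j+1})$ equals $1$ at $y=\hat c_j$) yields the formulas for $i=0$ and $1\leq i\leq n-1$; for $i=n$, the atom $\lambda^{(n)}(\{\hat c_n\})$ contributes the term $\Lambda_n\tilde p^n_2/\hat c_n$ and the remainder of $\lambda^{(n)}$ enters only through the interval $(\hat c_n,\hat c_{n-1})$. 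The only truly delicate point I anticipate is the pathwise identification $\hat X_{\hat\sigma^{(n)}}=Y_{\tau^{(n)}}$; although intuitive from the time-change description, a rigorous verification requires a careful treatment of the right-continuous inverse $\gamma$ at excursion endpoints, in direct parallel with the local-time argument already used in the proof of Theorem~\ref{THM42}.
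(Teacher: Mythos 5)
Your proposal is correct and takes essentially the same approach as the paper: both apply the strong Markov property of $Y$ at $\sigma^{(n)}$, reduce the computation to classical Brownian hitting probabilities on the gaps $(\hat c_{j+1},\hat c_j)$, and conclude by substituting the representation of $\lambda^{(n)}$ from Lemma~\ref{LM64}. The paper packages the identification $\hat X_{\hat\sigma^{(n)}}=Y_{\tau^{(n)}}$ directly as the event equality $\{\hat X_{\hat\sigma^{(n)}}=\hat c_i\}=\{Y_{\sigma^{(n)}}\in(\hat c_{i+1},\hat c_{i-1}),\,\tau_{\hat c_i}\circ\theta_{\sigma^{(n)}}<(\tau_{\hat c_{i+1}}\wedge\tau_{\hat c_{i-1}})\circ\theta_{\sigma^{(n)}}\}$ rather than introducing $\tau^{(n)}$ as an intermediate object, but the content --- including the delicate time-change identification you flag --- is the same.
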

\begin{proof}
{\color{blue}We consider the case $1 \leq i \leq n-1$. The other two cases can be addressed in a similar manner.}
Note that
	\[
		\{\hat X_{\hat \sigma^{(n)}}=\hat c_i\}=\{Y_{\sigma^{(n)}}\in (\hat c_{i+1}, \hat c_{i-1}), \tau_{\hat c_i}\circ \theta_{\sigma^{(n)}}<\left(\tau_{\hat c_{i+1}} \wedge \tau_{\hat c_{i-1}}\right)\circ \theta_{\sigma^{(n)}} \},
	\]
	where $\tau_a:=\inf\{t>0: Y_t=a\}$ for $a\in (0,\infty)$. It follows from the strong Markov property of $Y$ that
	\[
	\begin{aligned}
		\hat \lambda^{(n)}(\{\hat c_i\})&=\bP_0\left(\bP_{Y_{\sigma^{(n)}}}(\tau_{\hat c_i}<\tau_{\hat c_{i+1}} \wedge \tau_{\hat c_{i-1}}); Y_{\sigma^{(n)}}\in (\hat c_{i+1}, \hat c_{i-1}) \right)\\
		&= \int_{(\hat c_{i+1}, \hat c_i]}\frac{x-\hat c_{i+1}}{\hat c_i-\hat c_{i+1}}\lambda^{(n)}(\dd x) + \int_{(\hat c_{i}, \hat c_{i-1})}\frac{\hat c_{i-1}-x}{\hat c_{i-1}-\hat c_{i}}\lambda^{(n)}(\dd x).
	\end{aligned}
	\]
	Then applying Lemma~\ref{LM64}, we can obtain the desired expression of $\lambda^{(n)}(\{\hat c_i\}) $. 
\end{proof}

We are now prepared to prove Theorem~\ref{MainTHM} for the case where $p_3=0$ and $|p_4|=\infty$.

\begin{proof}[Proof of Theorem~\ref{MainTHM} for $p_3=0,|p_4|=\infty$]
Let
\[
	\tilde{\mathfrak{p}}_0:=\int_{(\hat{c}_{1},\hat{c}_0]}\frac{x-\hat{c}_{1}}{\hat{c}_0-\hat{c}_{1}}\lambda(\dd x)+\lambda\left((\hat{c}_0,\infty)\right)
\]
and
\[
	\tilde{\mathfrak{p}}_n:=\int_{(\hat{c}_{n+1},\hat{c}_n]}\frac{x-\hat{c}_{n+1}}{\hat{c}_n-\hat{c}_{n+1}}\lambda(\dd x)+\int_{(\hat{c}_{n},\hat{c}_{n-1})}\frac{\hat{c}_{n-1}-x}{\hat{c}_{n-1}-\hat{c}_{n}}\lambda(\dd x),\quad n\geq 1.
\]
	Substituting the expression of $\hat \lambda^{(n)}$ from Lemma~\ref{LM81} into \eqref{eq:B2},  we can formulate the resolvent matrix $\hat{\Psi}^{(n)}_{ij}(\alpha):=\bE_{\hat{c}_i}\int_0^\infty e^{-\alpha t}1_{\{\hat c_j\}}(\hat{X}^{(n)}_t)\dd t$ of $\hat X^{(n)}$ for $\alpha>0$ and $i,j\in \bN$ as follows:
\[
\begin{aligned}
&\hat{\Psi}^{(n)}_{ij}(\alpha)-\Phi_{ij}(\alpha)\\
=&u_\alpha(i) \frac{\sum_{k=0}^{n-1} \tilde{\mathfrak p}_k \Phi_{kj}(\alpha)+\left(\tilde p^n_2+\hat c_n\int_{(\hat c_n,\hat c_{n-1})}\frac{\hat c_{n-1}-x}{\hat c_{n-1}-\hat c_n}\lambda(\dd x)\right)\frac{\Phi_{nj}(\alpha)}{\hat{c}_n}}{\frac{1}{\Lambda_n}-\sum_{k=0}^{n-1}\tilde{\mathfrak p}_k u_\alpha(k)-\left(\tilde p^n_2+\hat c_n\int_{(\hat c_n,\hat c_{n-1})}\frac{\hat c_{n-1}-x}{\hat c_{n-1}-\hat c_n}\lambda(\dd x)\right)\frac{u_\alpha(n)}{\hat{c}_n}} \\
=&u_\alpha(i) \frac{\sum_{k=0}^{n-1} \tilde{\mathfrak p}_k \Phi_{kj}(\alpha)+\left(\tilde p^n_2+\hat c_n\int_{(\hat c_n,\hat c_{n-1})}\frac{\hat c_{n-1}-x}{\hat c_{n-1}-\hat c_n}\lambda(\dd x)\right)\frac{\Phi_{nj}(\alpha)}{\hat{c}_n}}{\lambda(\{\partial\})+\sum_{k=0}^{n-1}\tilde{\mathfrak p}_k (1- u_\alpha(k))+ \left(\tilde p^n_2+\hat c_n\int_{(\hat c_n,\hat c_{n-1})}\frac{\hat c_{n-1}-x}{\hat c_{n-1}-\hat c_n}\lambda(\dd x)\right)\frac{1-u_\alpha(n)}{\hat{c}_n}}.
\end{aligned}\]
Note that
\[
\hat c_n\int_{(\hat c_n,\hat c_{n-1})}\frac{\hat c_{n-1}-x}{\hat c_{n-1}-\hat c_n}\lambda(\dd x)\leq \int_{(\hat c_n,\hat c_{n-1})}x\lambda(\dd x)\rightarrow 0
\]
as $n\rightarrow \infty$, due to \eqref{eq:621}. In addition, 
(see,  e.g., \cite[\S7.10, (3) and (9)]{WY92})
\[
	\lim_{n\rightarrow \infty}\frac{\Phi_{nj}(\alpha)}{\hat{c}_n}=2u_\alpha(j)\mu_j,\quad 	\lim_{n\rightarrow \infty}\frac{1- u_\alpha(n)}{\hat{c}_n}=2\alpha \sum_{k\in \bN}\mu_ku_\alpha(k).
\]
Therefore,
\[
\begin{aligned}
\lim_{n\rightarrow \infty}\hat{\Psi}^{(n)}_{ij}&(\alpha)=\Phi_{ij}(\alpha)\\
&+u_\alpha(i) \frac{\sum_{k=0}^{\infty} \tilde{\mathfrak p}_k \Phi_{kj}(\alpha)+2\tilde p_2u_\alpha(j)\mu_j}{\lambda(\{\partial\})+\sum_{k=0}^{\infty}\tilde{\mathfrak p}_k (1- u_\alpha(k))+2\alpha \tilde p_2 \sum_{k\in \bN}\mu_ku_\alpha(k)},
\end{aligned}\]
where $\tilde{p}_2$ is defined as \eqref{eq:718}.
According to Theorem~\ref{THMA4}, this limit is precisely the resolvent matrix of $\hat X$. Hence, the parameters of $\hat X$ are (up to a multiplicative constant):
\[
	\gamma=\lambda(\{\partial\}),\quad \beta=2\tilde p_2,\quad \nu_k=\tilde{\mathfrak p}_k,\; k\in \bN.
\]
Using Theorem~\ref{THM65}, we can eventually obtain the desired conclusion.
\end{proof}

\subsection{Sojourn case $p_3>0$}\label{SEC82}

Let us consider the final case where $p_3>0$ and $|p_4|=\infty$.  The basic idea is to transform this case into one with $p_3=0$.  
Recall that $Y^1$, defined by \eqref{eq:316}, is a Feller's Brownian motion with parameters $(0,p_2,0,p_4)$ and $\ell^{Y^1}$, defined by \eqref{eq:322}, is its local time at $0$.  According to \S\ref{SEC323},  the subprocess $Y^3$ of $Y^1$,  perturbed by the multiplicative functional
\[
	M^3_t:=e^{-p_1\ell^{Y^1}_t},\quad t\geq 0,
\]
is a Feller's Brownian motion with parameters $(p_1,p_2,0,p_4)$.  Denote by $\zeta_3$ the lifetime of $Y^3$.  Mimicking the proof of Lemma~\ref{LM36},  we can easily show that
\[
	\mathfrak{f}_3(t):=t\wedge \zeta_3+p_3\ell^{Y^1}_{t\wedge \zeta_3},\quad t\geq 0
\]
is a PCAF of $Y^3$ with $\text{Supp}(\mathfrak{f}_3)=[0,\infty)$.  

\begin{lemma}\label{LM82}
The time-changed process of $Y^3$ with respect to the PCAF $\mathfrak{f}_3$ is a Feller's Brownian motion with parameters $(p_1,p_2,p_3,p_4)$.  
\end{lemma}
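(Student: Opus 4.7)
The plan is to realize $Y^3$ and the target Feller's Brownian motion $Y$ with parameters $(p_1,p_2,p_3,p_4)$ on a common probability space via the pathwise construction of \S\ref{SEC323}, and then to identify the time-changed process of $Y^3$ by $\mathfrak{f}_3$ with $Y$ trajectory by trajectory. First, I would fix a realization of $Y^1$ together with its local time $\ell^{Y^1}$ at $0$ as in \S\ref{SEC323}, together with an independent random variable $\mathfrak{e}\sim\mathrm{Exp}(1)$. Setting $\zeta_3:=\inf\{t>0:p_1\ell^{Y^1}_t\geq \mathfrak{e}\}$, the process $Y^3_t:=Y^1_t$ for $t<\zeta_3$ (and $\partial$ afterwards) is a version of the subprocess of $Y^1$ perturbed by $M^3_t=e^{-p_1\ell^{Y^1}_t}$, and hence is the Feller's Brownian motion with parameters $(p_1,p_2,0,p_4)$. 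In parallel, with $\mathfrak{f}(t):=t+p_3\ell^{Y^1}_t$ and $\mathfrak{f}^{-1}(t):=\inf\{s:\mathfrak{f}(s)>t\}$, the process $Y^2_t:=Y^1_{\mathfrak{f}^{-1}(t)}$ killed at
\[
\zeta:=\inf\{t>0:p_1\ell^{Y^1}_{\mathfrak{f}^{-1}(t)}\geq \mathfrak{e}\}
\]
yields, using the \emph{same} $\mathfrak{e}$, the Feller's Brownian motion $Y$ with parameters $(p_1,p_2,p_3,p_4)$ described in \S\ref{SEC323}.

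Next, I would carry out the pathwise identification of the time change of $Y^3$ by $\mathfrak{f}_3$ with $Y$. Since $\ell^{Y^1}$ is continuous and non-decreasing and $\mathfrak{f}$ is continuous and strictly increasing on $\{t:\ell^{Y^1}_t<\mathfrak{e}/p_1\}\cup\{\zeta_3\}$, one has
\[
\{p_1\ell^{Y^1}_{\mathfrak{f}^{-1}(t)}\geq \mathfrak{e}\}=\{\mathfrak{f}^{-1}(t)\geq \zeta_3\}=\{t\geq \mathfrak{f}(\zeta_3)\},
\]
so $\zeta=\mathfrak{f}(\zeta_3)$. As $\mathfrak{f}_3$ coincides with $\mathfrak{f}$ on $[0,\zeta_3)$ and is constant on $[\zeta_3,\infty)$, the time-changed process $\check Y^3$ of $Y^3$ by $\mathfrak{f}_3$ has lifetime $\mathfrak{f}_3(\zeta_3-)=\mathfrak{f}(\zeta_3)=\zeta$, and its right-continuous inverse $\gamma_3$ satisfies $\gamma_3(t)=\mathfrak{f}^{-1}(t)<\zeta_3$ for all $t<\zeta$. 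Hence
\[
\check Y^3_t = Y^3_{\gamma_3(t)} = Y^1_{\mathfrak{f}^{-1}(t)} = Y^2_t = Y_t,\qquad 0\leq t<\zeta,
\]
and $\check Y^3_t=Y_t=\partial$ for $t\geq \zeta$. Consequently $\check Y^3$ and $Y$ have the same distribution, so $\check Y^3$ is a Feller's Brownian motion with parameters $(p_1,p_2,p_3,p_4)$, proving the lemma.

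The main obstacle is setting up the coupling that lets the \emph{same} exponential $\mathfrak{e}$ drive both the killing of $Y^1$ giving $Y^3$ and the killing of $Y^2$ giving $Y$, and then proving the key identity $\zeta=\mathfrak{f}(\zeta_3)$ under this coupling. Once this commutation between the subprocess operation and the time change operation—both of which are governed by the single local time $\ell^{Y^1}$—is established, the rest is routine manipulation of right-continuous inverses. A minor technical point to address along the way is that $\mathfrak{f}_3$ is only a PCAF of $Y^3$ and not of $Y^1$, but since $\mathfrak{f}_3(t)=\mathfrak{f}(t)$ for all $t<\zeta_3$, the two right-continuous inverses agree on the interval $[0,\zeta)$ where the identification is needed.
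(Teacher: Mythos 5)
Your proposal is correct and follows essentially the same route as the paper: both arguments reduce to the single commutation identity between the killing time and the time change, namely that the subprocess killing time $\zeta$ for $Y$ equals $\mathfrak{f}(\zeta_3)$, so that the right-continuous inverse of $\mathfrak{f}_3$ on $[0,\zeta)$ agrees with $\mathfrak{f}^{-1}$. Your explicit coupling with an independent $\mathrm{Exp}(1)$ variable $\mathfrak{e}$ is precisely a concrete realization of the Blumenthal--Getoor product-space subprocess construction $\Omega^3=\Omega^1\times[0,\infty]$ with the conditional law $\alpha_\omega((t,\infty])=e^{-p_1\ell^{Y^1}_t(\omega)}$ that the paper invokes, so the two proofs are the same argument in different notation.
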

\begin{proof}
Given $Y^1$ expressed as $Y^1=(\Omega^1, \sG^1,  Y^1_t,  (\bP^1_x)_{x\in [0,\infty)})$,  according to \cite[III, Theorem~3.3]{BG68},  we can write $Y^3=(\Omega^3,\sG^3, Y^3_t, \bP^3_x)$ as follows:
\begin{equation}\label{eq:81}
\begin{aligned}
	&\Omega^3=\Omega^1\times [0,\infty],\quad \sG^3:=\sG^1\times \mathcal{B}([0,\infty]),  \\
	&Y^3_t(\omega, \kappa):=Y^1_t(\omega)\text{ for }t<\kappa,\text{ and }Y^3_t(\omega, \kappa):=\partial\text{ for }t\geq \kappa,
\end{aligned}\end{equation}
and for $\Gamma\in \sG^3$ with $\Gamma^\omega:=\{\kappa \in [0,\infty]: (\omega,\kappa)\in \Gamma\}$ for $\omega\in \Omega^1$, 
\[
	\bP^3_x(\Gamma):=\bE^1_x\left(\alpha_\omega(\Gamma^\omega)\right),
\]
where $\alpha_\omega$ is a probability measure on $[0,\infty]$ such that $\alpha_\omega((t,\infty])=M^3_t(\omega)$ for all $t\geq 0$.  Note that $\zeta_3(\omega,\kappa)=\kappa$.  

Let $\mathfrak{f}^{-1}_3(t)$ denote the right-continuous inverse of $\mathfrak{f}_3(t)$.  This inverse is continuous and strictly increasing up to $\zeta_3$.  The time-changed process $Y^4=(\Omega^4, \sG^4,Y^4_t, \bP^4_x)$ of $Y^3$ with respect to $\mathfrak{f}_3$ can be expressed as 
\[
	\Omega^4=\Omega^3,\quad \sG^4=\sG^3,\quad \bP^4_x=\bP^3_x,
\]
and
\[
	Y^4_t:=Y^3_{\mathfrak{f}^{-1}_3(t)}\text{ for }t<\zeta_4:=\mathfrak{f}_3(\zeta_3),\text{ and }Y^4_t:=\partial\text{ for } t\geq \zeta_4,
\]
where $\zeta_4$ is the lifetime of $Y^4$.  By substituting the expression \eqref{eq:81} of $Y^3$ into this expression of $Y^4$,  we find that for $(\omega,\kappa)\in \Omega^1\times [0,\infty]$,  
\[
	Y^4_t(\omega,\kappa)=Y^1_{\mathfrak{f}^{-1}(t)}(\omega),\quad t<\zeta_4(\omega, \kappa)=\kappa+p_3\ell^{Y^1}_\kappa(\omega)
\]
where $\mathfrak{f}^{-1}$ is the right-continuous inverse of $\mathfrak{f}(t)=t+p_3\ell^{Y^1}_t(\omega)$ ($t\geq 0$).  In other words,  $Y^4$ is the killed process of $Y^2$, defined in \eqref{eq:321}, at time $\zeta_4$.  Note that for all $t\geq 0$,
\[
	\alpha_\omega\left(\{\kappa\in [0,\infty]: \zeta_4(\omega,\kappa)\geq t\}\right))=\alpha_\omega\left(\{\kappa\in [0,\infty]: \kappa \geq \mathfrak{f}^{-1}(t)\} \right)=e^{-p_1 \ell^{Y^1}_{\mathfrak{f}^{-1}(t)}(\omega)}.
\]
Thus,  $Y^4$ is the subprocess of $Y^2$ perturbed by the multiplicative functional in \eqref{eq:327}.  According to the pathwise representation in \S\ref{SEC323},  we conclude that $Y^4$ is a Feller's Brownian motion with parameters $(p_1,p_2,p_3,p_4)$.  
\end{proof}

Now, we complete the proof of Theorem~\ref{MainTHM} as follows.

\begin{proof}[Proof of Theorem~\ref{MainTHM} for $p_3>0,|p_4|=\infty$]
Let $Y^3=(\Omega^3,Y^3_t,\bP^3_x)$, with lifetime $\zeta_3$, be a Feller's Brownian motion with parameters $(p_1,p_2,0,p_4)$.  According to Lemma~\ref{LM82},  the Feller's Brownian motion $Y=(\Omega, Y_t,\bP_x)$ with parameters $(p_1,p_2,p_3,p_4)$ can be represented as the time-changed process of $Y^3$ with respect to $\mathfrak{f}_3$.  The lifetime of $Y$ is $\zeta=\mathfrak{f}_3(\zeta_3)$.

Denote by $\check{Y}^3$ and $\check{Y}$ the time-changed process  obtained in Theorem~\ref{THM42} for $Y^3$ and $Y$, respectively.  The parameters determining the resolvent matrix of $\check{Y}^3$ have been examined in \S\ref{SEC81}.  It suffices to show that $\check{Y}$ is identical in law to $\check{Y}^3$.  Let $L^{3,\hat{c}_n}_t$ denote the local time of $Y^3$ at $\hat{c}_n$ as defined in Definition~\ref{DEF31},  and set $A^3_t:=\sum_{n\in \bN}\mu_n L^{3,\hat{c}_n}_t$ for all $t\geq 0$.  It is straightforward to verify that
\[
	L^{\hat{c}_n}_t:=L^{3,\hat{c}_n}_{\mathfrak{f}^{-1}_3(t)},\quad t\geq 0
\]
is the local time of $Y$ at $\hat{c}_n$ in the sense of Definition~\ref{DEF31}.  Thus, the PCAF of $Y$ given by \eqref{eq:41} is $A_t=A^3_{\mathfrak{f}^{-1}_3(t)}$,  and the right-continuous inverse of $A$ is 
\[
	\gamma_t=\inf\{s>0:A_s>t\}=\inf\{s>0:A^3_{\mathfrak{f}^{-1}_3(s)}>t\},\quad \forall t<A_{\zeta}=A^3_{\zeta_3}.
\]
Since $\mathfrak{f}_3$ is strictly increasing up to $\zeta_3$,  it follows that
\[
	\mathfrak{f}^{-1}_3(\gamma_t)=\inf\{s>0:A^3_s>t\}=:\gamma^3_t,\quad \forall t<A_{\zeta}=A^3_{\zeta_3},
\]
where $\gamma^3_t$ is the right-continuous inverse of $A^3$.  {\color{blue}In particular},  for $t<A_{\zeta}=A^3_{\zeta_3}$,
\begin{equation}\label{eq:82}
	\check{Y}_t=Y_{\gamma_t}=Y^3_{\mathfrak{f}^{-1}_3(\gamma_t)}=Y^3_{\gamma^3_t}=\check{Y}^3_t.
\end{equation}
Note that $A_\zeta$ is the lifetime of $\check{Y}$ and $A^3_{\zeta_3}$ is the lifetime of $\check{Y}^3$.  Therefore,  \eqref{eq:82} establishes the identification between $\check{Y}$ and $\check{Y}^3$.
\end{proof}

\appendix

{\color{blue}
\section{Proof of Theorem~\ref{THM32}}\label{APPB}


In this appendix, we present the necessary details for the proof of Theorem~\ref{THM32}. Our approach follows an argument developed for general one-dimensional diffusions in \cite{M68}. To this end, we begin by introducing some notation.

Let $[0,\infty]$ denote the one-point compactification of $[0,\infty)$, and let $C([0,\infty])$ denote the space of all continuous functions on $[0,\infty]$.  Note that $C_0([0,\infty))=\{\bar{f}|_{[0,\infty)}: \bar{f}(\infty)=0\}$, and that $C([0,\infty])$ is the linear span of $C_0([0,\infty))$ and the constant functions.  For each function $\bar f\in C([0,\infty])$,  define its projection onto $C_0([0,\infty))$ by
\[
	f:= \left(\bar{f}-\bar{f}(\infty)\cdot 1_{[0,\infty]}\right)\big|_{[0,\infty)}\in C_0([0,\infty)).
\]
Let $(T_t)_{t\geq 0}$ be the semigroup on $C_0([0,\infty))$ associated with a Feller process $Y$ on $[0,\infty)$,  whose infinitesimal generator is denoted by $(\sL,\cD(\sL))$.  For each $\bar{f} \in C([0,\infty])$, with corresponding projection $f \in C_0([0,\infty))$, define
\[
	\bar{T}_t \bar{f}=T_t f+\bar{f}(\infty)\cdot 1_{[0,\infty]},\quad \forall t\geq 0.
\]
It is clear that $(\bar{T}_t)_{t\geq 0}$ forms a strongly continuous contraction semigroup on $C([0,\infty])$,  whose infinitesimal generator is given by
\begin{equation}\label{eq:C1}
\begin{aligned}
	&\cD(\bar{\sL})=\{\bar{f}\in C([0,\infty]): f\in \cD(\sL)\},\\
	 &\bar{\sL}\bar{f}=\sL f,\quad \bar{f}\in \cD(\bar{\sL}).
\end{aligned}
\end{equation}
Note that $(\bar{T}_t)_{t\geq 0}$ can still be interpreted as the semigroup of the process $Y$ by treating the point $\infty$ as an absorbing state, rather than as the cemetery. (When $\infty$ serves as the cemetery, it is conventional to require that every function $\bar f$ satisfies $\bar f(\infty) = 0$.)
Let $(\bar{G}_\alpha)_{\alpha>0}$ denote the resolvent of $(\bar{T}_t)_{t\geq 0}$. 

\subsection{Proof of necessity and sufficiency}

In this section, we prove the main part of Theorem~\ref{THM32}, excluding uniqueness.  For convenience,  we define
\[
	\sG:=\{f\in C_0([0,\infty))\cap C^2((0,\infty)): f''\in C_0([0,\infty))\}
\]
and
\[
	\bar{\sG}:=\{\bar{f}\in C([0,\infty])\cap C^2((0,\infty)): \bar f''\in C([0,\infty]).  
\]
Note that $\sG=\{f: \bar{f}\in \sG\}$.  The only point requiring clarification is that any $\bar{f}\in \bar{\sG}$ satisfies $\lim_{x\rightarrow \infty}\bar{f}''(x)=0$, which follows from \cite[page 22, Lemma 2]{M68}.  Given a quadruple $(p_1,p_2,p_3,p_4)$ satisfying \eqref{eq:330} and \eqref{eq:38},  define
\[
	\Phi_p(f):=p_1f(0) - p_2f'(0) + \frac{p_3}{2}f''(0) + \int_{(0,\infty)} \left(f(0) - f(x)\right) p_4(\dd x),\quad f\in \sG
\]
and 
\[
\bar\Phi_p(\bar f):=p_1 (\bar f(0)-\bar{f}(\infty)) - p_2\bar f'(0) + \frac{p_3}{2}\bar f''(0) + \int_{(0,\infty)} \left(\bar f(0) - \bar f(x)\right) p_4(\dd x),\quad \bar f\in \bar \sG.  
\]
In addition,  define the subspaces $\sG_p:=\{f\in \sG:\Phi_p(f)=0\}$ and $\bar{\sG}_p:=\{f\in \bar{\sG}: \bar \Phi_p(\bar{f})=0\}$,  so that $\sG_p=\{f: \bar{f}\in \bar{\sG}_p\}$.  

\begin{proof}[Proof of necessity and sufficiency]

We begin by establishing the necessity.  Let $Y$ be a Feller's Brownian motion on $[0,\infty)$ with infinitesimal generator $(\sL,\cD(\sL))$,  and let $(\bar{\sL},\cD(\bar{\sL}))$ be defined as \eqref{eq:C1}.  We claim that
\[
	\cD(\bar{\sL})\subset \bar{\sG},\quad \bar{\sL}\bar f=\frac{1}{2}\bar{f}'',\quad \forall \bar{f}\in \cD(\bar{\sL}).
\]
This is equivalent to stating that $\cD(\sL)\subset \sG$ and $\sL f=\frac{1}{2}f''$ for all $f\in \cD(\sL)$,  which follows directly from \eqref{eq:32}, \eqref{eq:33}, and \eqref{eq:31}.  Since $0$ is a regular boundary,  we may apply \cite[page 39, Theorem~2]{M68} to $\bar{\sL}$.  This yields a collection of parameters $\kappa_0,  s_0,  q_0,  \sigma_0$,  where $\kappa_0,\sigma_0\geq 0$ are constants,  $s_0\in C([0,\infty])$ is non-decreasing and satisfies $s_0(x)=2x$ in a neighbourhood of $0$,  and $q_0$ is a finite positive measure on $[0,\infty]$.  (To avoid ambiguity, we have replaced $p_0$ in \cite[page 39, (40)]{M68} with $s_0$). These parameters satisfy the following condition:
\begin{equation}\label{eq:C2}
\text{either}\quad \sigma_0>0\quad \text{or} \quad \int_{[0,\infty]}\frac{q_0(\dd x)}{s_0(x)}=\infty, 
\end{equation}
and for any $\bar f\in \cD(\bar{\sL})$,  it holds that
\begin{equation}\label{eq:C4}
	\kappa_0 \bar{f}(0)-\frac{q_0(\{0\})}{2}\bar{f}'(0)+\int_{(0,\infty]}(\bar{f}(0)-\bar{f}(x)) \frac{q_0(\dd x)}{s_0(x)}+\frac{\sigma_0}{2}\bar{f}''(0)=0. 
\end{equation}
As $\bar{T}_t1_{[0,\infty]}=1_{[0,\infty]}$,  the parameter $r(t)$ in \cite[page 39, (43)]{M68} equals $0$.  Consequently,  $\kappa_0=0$.  Moreover,  if $q_0(\{0\})>0$,  then $\int_{[0,\infty]}\frac{q_0(\dd x)}{s_0(x)}=\infty$.  Noting that $s_0(\infty)<\infty$,  the condition \eqref{eq:C2} is therefore equivalent to 
\begin{equation}\label{eq:C3}
\int_{(0,\infty)}\frac{q_0(\dd x)}{s_0(x)}=\infty,\quad \text{if } \sigma_0=0, q_0(\{0\})=0.
\end{equation}
We now define
\[
	\hat p_1:=\frac{q_0(\{\infty\})}{s_0(\infty)},\quad \hat p_2:=\frac{q_0(\{0\})}{2},\quad \hat p_3:=\sigma_0,\quad \hat p_4(\dd x):=\frac{q_0(\dd x)}{s_0(x)}\bigg|_{(0,\infty)}.
\]
Since $q_0$ is a finite measure and $s_0(x)=2x$ in a neighbourhood of $0$,  we have $\int_{(0,\infty)}(1\wedge x) p_4(\dd x)<\infty$.  Therefore,  there exists a positive constant $c$ such that 
\[
	p_1:=c\cdot \hat p_1,\quad p_2:=c\cdot \hat p_2,\quad p_3:=c\cdot \hat p_3,\quad p_4:=c\cdot \hat p_4
\]
satisfy the conditions \eqref{eq:330} and \eqref{eq:38}.  Moreover,  by \eqref{eq:C4},  we have $\bar \Phi_p(\bar{f})=0$ for all $\bar f \in \cD(\bar{\sL})$,  so that
\[
	\cD(\bar{\sL})\subset \bar{\sG}_p,\quad \bar{\sL}\bar f=\frac{1}{2}\Delta \bar f:=\frac{1}{2}\bar{f}''.
\]
By \cite[page 41, Theorem~3]{M68},  the operator $\frac{1}{2}\Delta$ with domain $\bar{\sG}_p$ is the infinitesimal operator of a Feller semigroup on $C([0,\infty])$.  Let $(R_\alpha)_{\alpha>0}$ denote its resolvent.  Then,  by the Hille-Yosida theorem (see,  e.g.,  \cite[page 2, Theorem~1]{M68}),  for any $\bar{h}\in C([0,\infty])$,  both $\bar{G}_\alpha \bar{h}$ and $R_\alpha h$ solves the equation
\[
	\alpha \bar{F}-\frac{1}{2}\bar{F}''=\bar h,
\]
subject to the condition $\bar \Phi_p(\bar F)=0$.  According to \cite[page 41, Lemma~4]{M68},  these solutions coincide,  i.e., $\bar{G}_\alpha \bar{h}=R_\alpha \bar h$ for any $\bar h\in C([0,\infty])$.  Therefore,  $\cD(\bar{\sL})=\bar{\sG}_p$,  which immediately implies $\cD(\sL)=\sG_p$.  This completes the proof of necessity.  

For the sufficiency,  we first apply \cite[II\S5, Theorem~3]{M68} to conclude that the operator $\frac{1}{2}\Delta$ with domain $\bar{\sG}_p$ is indeed the infinitesimal generator of a Feller semigroup on $C([0,\infty])$.  Its projection onto $C_0([0,\infty))$, denoted by $(\sL, \cD(\sL))$ with $\cD(\sL)=\sG_p$, then serves as the infinitesimal generator of a Feller process $Y$ on $[0,\infty)$, where $\infty$ is interpreted as the cemetery state.  It remains to demonstrate that the killed process 
\begin{equation}\label{eq:317}
	Y^0_t:=\left\lbrace
	\begin{aligned}
	&Y_t,\quad &t<\tau_0, \\
	&\partial\;(=\infty),\quad &t\geq \tau_0,
	\end{aligned}
	\right.
\end{equation}
where $\tau_0=\inf\{t>0:Y_t=0\}$,  
is identical in law to the {\color{blue}killed} Brownian motion on $(0,\infty)$.  In fact,  the proof of \cite[II\S5, Theorem~3]{M68} shows that the resolvent kernel of $Y^0=(Y^0_t)_{t\geq 0}$ is given by \cite[\S3, \#7]{M68} (with $p(x)=2x$ and $m$ being the Lebesgue measure).  This is precisely the resolvent kernel of {\color{blue}killed} Brownian motion.
\end{proof}

\subsection{Proof of uniqueness}

In this section, we prove that the parameters $(p_1, p_2, p_3, p_4)$, which characterizes the boundary behavior of a given Feller's Brownian motion $Y$, are uniquely determined -- an observation that, to the best of our knowledge, has not been explicitly stated in the existing literature including \cite{IM63} and \cite{M68}.

Let $(\tilde{p}_1,\tilde{p}_2,\tilde{p}_3,\tilde{p}_4)$ be another quadruple satisfying \eqref{eq:330} and \eqref{eq:38}, which also characterize the boundary behaviour of $Y$. As explained following the statement of Theorem~\ref{THM32}, the fact that $f=G_\alpha h\in \cD(\sL)$ with $h\in C_0([0,\infty))$ satisfies the boundary condition \eqref{eq:12} is equivalent to the representation of $G_\alpha h(0)$ given in \eqref{eq:39}.  Applying this observation to  $(\tilde{p}_1,\tilde{p}_2,\tilde{p}_3,\tilde{p}_4)$, we obtain
\begin{equation}\label{eq:A3}
\begin{aligned}
		&\frac{2p_2\int_{(0,\infty)}e^{-\sqrt{2\alpha}x}h(x)\dd x+p_3h(0)+\int_{(0,\infty)}G^0_\alpha h(x)p_4(\dd x)}{p_1+\sqrt{2\alpha}p_2+\alpha p_3+\int_{(0,\infty)}\left(1-e^{-\sqrt{2\alpha}x}\right)p_4(\dd x)} \\
		&\qquad\qquad  =\frac{2\tilde p_2\int_{(0,\infty)}e^{-\sqrt{2\alpha}x}h(x)\dd x+\tilde p_3h(0)+\int_{(0,\infty)}G^0_\alpha h(x)\tilde p_4(\dd x)}{\tilde p_1+\sqrt{2\alpha}\tilde p_2+\alpha \tilde p_3+\int_{(0,\infty)}\left(1-e^{-\sqrt{2\alpha}x}\right)\tilde p_4(\dd x)}
\end{aligned}\end{equation}
for all $h\in C_0([0,\infty))$ and $\alpha>0$. We will prove that $(\tilde{p}_1,\tilde{p}_2,\tilde{p}_3,\tilde{p}_4)=(p_1, p_2, p_3, p_4)$ based on this equality.  Before doing so, we collect some elementary facts that will be used in the argument.

\begin{lemma}\label{LMA1}
The following statements hold:
\begin{itemize}
\item[(1)] Given $h\in C_c((0,\infty))$ with $\text{supp}[h]\subset [l,r]\subset (0,\infty)$, there exist constants $\alpha_0>0$ and $C>0$ such that
\[
	\sup_{\alpha \geq \alpha_0} \left|\alpha G^0_\alpha h(x)\right|\leq C\left(x\wedge \frac{l}{2}\right),\quad \forall x>0. 
\]
\item[(2)] Given a positive function $h\in C_c([0,\infty))$,  define $G^0h(x):=\lim_{\alpha\downarrow 0}G^0_\alpha h(x)$. Then there exists a constant $C>0$ such that 
\begin{equation}\label{eq:B42}
	G^0 h(x)\leq C(x\wedge 1),\quad \forall x>0. 
\end{equation}
\item[(3)] For any $x>0$, the following inequalities hold:
\[
	\sup_{\alpha\geq 2}\frac{1-e^{-\sqrt{2\alpha}x}}{\alpha}\leq 1\wedge x,\quad \sup_{\alpha\geq 1}\frac{1-e^{-\sqrt{2\alpha}x}}{\sqrt{\alpha}}\leq 1\wedge x.
\]
\end{itemize}
\end{lemma}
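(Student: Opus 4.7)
My plan is to treat the three parts independently, using the explicit resolvent density \eqref{eq:310}, which I will rewrite uniformly as $g^0_\alpha(x,y) = \frac{2}{\sqrt{2\alpha}}\sinh(\sqrt{2\alpha}(x\wedge y))e^{-\sqrt{2\alpha}(x\vee y)}$, together with the elementary estimates $\sinh t \leq t e^{t}$ (for $t\geq 0$) and $0\leq 1-e^{-y}\leq \min(1,y)$. For part (1), I would split the argument at $x = l/2$. When $x \leq l/2$, every $y\in \operatorname{supp}[h]\subset [l,r]$ satisfies $y\geq x$, so only the branch $g^0_\alpha(x,y) = \frac{2}{\sqrt{2\alpha}}\sinh(\sqrt{2\alpha}x)e^{-\sqrt{2\alpha}y}$ contributes, and the bound $\sinh(\sqrt{2\alpha}x)\leq \sqrt{2\alpha}x\,e^{\sqrt{2\alpha}x}$ gives
\[
\alpha G^0_\alpha h(x) \leq 2\|h\|_\infty(r-l)\cdot \alpha e^{-\sqrt{2\alpha}(l-x)}\cdot x \leq 2\|h\|_\infty(r-l)\cdot \alpha e^{-\sqrt{2\alpha}l/2}\cdot x;
\]
since $\alpha e^{-\sqrt{2\alpha}l/2}$ is uniformly bounded for $\alpha \geq \alpha_0$ (and vanishes as $\alpha\to\infty$), this yields the desired estimate of the form $Cx$. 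When $x > l/2$, I would simply invoke the contractivity of the sub-Markovian resolvent, $\|\alpha G^0_\alpha h\|_\infty \leq \|h\|_\infty$, which gives $\alpha G^0_\alpha h(x) \leq \|h\|_\infty = (2\|h\|_\infty/l)\cdot (l/2)$. Taking the larger of the two constants then settles part (1).

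For part (2), I would first observe that $G^0_\alpha h$ is monotone decreasing in $\alpha$ when $h\geq 0$, a direct consequence of the resolvent equation, so the pointwise limit $G^0 h$ is well defined. Combining the Taylor expansion $\sinh(\sqrt{2\alpha}a) = \sqrt{2\alpha}a + O(\alpha^{3/2})$ with monotone convergence yields
\[
G^0 h(x) = 2\int_0^\infty (x\wedge y)\,h(y)\,\mathrm{d}y.
\]
Choosing $R>0$ with $\operatorname{supp}[h]\subset [0,R]$, the inequalities $x\wedge y\leq x$ and $x\wedge y\leq R$ respectively give $G^0 h(x)\leq 2\|h\|_1 x$ and $G^0 h(x)\leq 2R\|h\|_1$, whence \eqref{eq:B42} holds with $C := 2\|h\|_1(1\vee R)$.

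Part (3) is then purely arithmetic: with $y := \sqrt{2\alpha}x$, the bound $1-e^{-y}\leq 1\wedge y$ gives
\[
\frac{1-e^{-\sqrt{2\alpha}x}}{\alpha}\leq \frac{1}{\alpha}\wedge \sqrt{\tfrac{2}{\alpha}}\,x\qquad\text{and}\qquad \frac{1-e^{-\sqrt{2\alpha}x}}{\sqrt{\alpha}}\leq \frac{1}{\sqrt{\alpha}}\wedge \sqrt{2}\,x,
\]
and the two claimed inequalities follow by noting which branch of the minimum dominates under $\alpha\geq 2$ and $\alpha\geq 1$ respectively. The main obstacle in the whole lemma lies in part (1), where the bound must be sharp like $x$ for small $x$ yet saturate at a constant for large $x$, uniformly in $\alpha\geq \alpha_0$: balancing the sharp small-$x$ estimate from $\sinh$ against the rougher uniform estimate from contractivity is the only step demanding any genuine care, while the remaining parts reduce to direct computations from the explicit Green's kernel.
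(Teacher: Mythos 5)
Your proofs of parts (1) and (2) are correct and follow essentially the same route as the paper's. In part (1) you make the same split at $x=l/2$, use contractivity $\|\alpha G^0_\alpha h\|_\infty\leq\|h\|_\infty$ for $x>l/2$, and for $x\leq l/2$ bound the explicit kernel via $\sinh t\leq te^{t}$; the paper does the same thing written as $\sinh(\sqrt{2\alpha}x)=\tfrac12 e^{\sqrt{2\alpha}x}(1-e^{-2\sqrt{2\alpha}x})$ together with $1-e^{-u}\leq u$, and chooses $\alpha_0$ so that the prefactor is absorbed (you instead note that $\alpha e^{-\sqrt{2\alpha}l/2}$ is bounded, which is equivalent). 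In part (2) you re-derive $G^0h(x)=2\int_0^\infty(x\wedge y)h(y)\,\dd y$, whereas the paper simply cites Chung--Zhao for this identity; the ensuing bounds are the same.

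For part (3), however, your own computation exposes a genuine discrepancy that you then slide past. You correctly obtain $\frac{1-e^{-\sqrt{2\alpha}x}}{\sqrt{\alpha}}\leq\frac{1}{\sqrt{\alpha}}\wedge\sqrt{2}x$, but this does \emph{not} yield the stated bound $1\wedge x$ for $\alpha\geq1$, because $\sqrt{2}x>x$. Indeed the second inequality of the lemma is false as written: the expression is decreasing in $\alpha$, so its supremum over $\alpha\geq1$ equals $1-e^{-\sqrt{2}x}$, which exceeds $x$ for small $x$ (e.g.\ $x=0.1$ gives $\approx 0.132>0.1$). The first inequality is fine since $\sqrt{2/\alpha}\leq1$ for $\alpha\geq2$. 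The paper's proof offers no computation here (``these are elementary evaluations''), so the error lies in the lemma's statement; the correct bound is $\sqrt{2}(1\wedge x)$, and the extra constant is harmless for the only use of this estimate (the dominated-convergence step in Proposition~\ref{PROA2}, where one only needs a dominating function proportional to $1\wedge x$, integrable against $p_4$). Your argument would be complete if you either stated the bound with the factor $\sqrt{2}$ or flagged the constant explicitly, rather than asserting that the inequalities ``follow by noting which branch of the minimum dominates.''
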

\begin{proof}
(1) Define $\|h\|_u:=\sup_{x\in (0,\infty)}|h(x)|$. Clearly, $|\alpha G^0_\alpha h(x)|\leq \|h\|_u$ for all $\alpha>0$ and $x\in (0,\infty)$. Therefore, to establish the estimate, it suffices to find constants $\alpha_0>0$ and $C>0$ such that
\[
	\sup_{\alpha \geq \alpha_0} \left|\alpha G^0_\alpha h(x)\right|\leq C x,\quad x\leq l/2. 
\]
Recall that $\alpha G^0_\alpha h(x)=\alpha \int_0^\infty g^0_\alpha (x,y)h(y)\dd y$, where $g^0_\alpha$ is given by \eqref{eq:310}.  For $x\leq l/2$,  a direct computation yields
\[
\begin{aligned}
	\alpha G^0_\alpha h(x)&=\frac{\sqrt{2\alpha}}{2}(1-e^{-2\sqrt{2\alpha}x})e^{-\frac{l}{2}\sqrt{2\alpha}}\int_l^r e^{-\sqrt{2\alpha}(y-x-\frac{l}{2})}h(y)\dd y \\
	&\leq \frac{\|h\|_u(r-l)}{2}\sqrt{2\alpha}\cdot (1-e^{-2\sqrt{2\alpha}x})\cdot e^{-\frac{l}{2}\sqrt{2\alpha}}. 
\end{aligned}\]
Now,  choose $\alpha_0>0$ such that $e^{\frac{l}{4}\sqrt{2\alpha}}\geq 2\sqrt{2\alpha}$ for all $\alpha \geq \alpha_0$. Then,  we have
\[
	1-e^{-2\sqrt{2\alpha}x}\leq e^{\frac{l}{4}\sqrt{2\alpha}}\cdot x,\quad \forall \alpha\geq \alpha_0,x>0. 
\]
It follows that
\[
	\sup_{\alpha \geq \alpha_0} \left|\alpha G^0_\alpha h(x)\right|\leq \frac{\|h\|_u(r-l)}{4}x,\quad x\leq l/2,
\]
which completes the proof of the desired estimate.

(2) To prove the inequality \eqref{eq:B42},  it suffices to observe that as $\alpha \downarrow 0$,
\[
	G^0_\alpha h(x)\uparrow G^0h(x)=2\int_0^\infty (x\wedge y) \cdot h(y)\dd y; 
\]
see \cite[pp 45, (30)]{CZ95}. 

(3) These are elementary evaluations
\end{proof}

We are now ready to prove the uniqueness of $(p_1,p_2,p_3,p_4)$. The proof will be divided into several steps for clarity. 

\begin{proposition}\label{PROA2}
If \eqref{eq:A3} holds for all $h\in C_0([0,\infty))$ and $\alpha>0$, then $(\tilde{p}_1,\tilde{p}_2,\tilde{p}_3,\tilde{p}_4)=(p_1, p_2, p_3, p_4)$.
\end{proposition}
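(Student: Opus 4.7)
The plan is to rewrite the equality of quotients \eqref{eq:A3} as the simultaneous vanishing of two boundary functionals on $\cD(\sL)$, upgrade this to a proportionality of functionals on the larger space $\sG$, extract each parameter by testing on exponentials, and fix the constant of proportionality via the normalization \eqref{eq:330}. Concretely, by applying the identities \eqref{eq:33}, \eqref{eq:3112}, and \eqref{eq:35} exactly as in the paragraph following \eqref{eq:39}, the hypothesis \eqref{eq:A3} is equivalent to
\[
\Phi_p(G_\alpha h) = 0 = \Phi_{\tilde p}(G_\alpha h), \qquad \forall\,h \in C_0([0,\infty)),\; \alpha > 0.
\]
Since $G_\alpha C_0([0,\infty)) = \cD(\sL)$ for each $\alpha > 0$ by Hille--Yosida, this gives $\cD(\sL) \subset \sG_p \cap \sG_{\tilde p}$.

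The next step is to promote these two inclusions to equalities. The necessity direction of Theorem~\ref{THM32} (established just above in this appendix) supplies \emph{some} quadruple $p^\ast$ realizing $\cD(\sL) = \sG_{p^\ast}$, whence $\cD(\sL)$ is a codimension-one linear subspace of $\sG$. On the other hand, evaluating at $f(x) = e^{-x}$ gives
\[
\Phi_p(e^{-x}) = p_1 + p_2 + \tfrac{p_3}{2} + \int_{(0,\infty)}(1-e^{-x})\,p_4(\dd x) > 0,
\]
the strict inequality following from nonnegativity of each summand, the normalization \eqref{eq:330}, and the positivity of $1-e^{-x}$ on $(0,\infty)$. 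Thus $\Phi_p \not\equiv 0$ and $\sG_p$ is of codimension one; combined with $\cD(\sL) \subset \sG_p$, this forces $\sG_p = \cD(\sL)$, and the same argument yields $\sG_{\tilde p} = \cD(\sL)$. Two nonzero linear functionals sharing a kernel are proportional, so
\[
\Phi_p = c\,\Phi_{\tilde p} \quad \text{on } \sG, \qquad c := \frac{\Phi_p(e^{-x})}{\Phi_{\tilde p}(e^{-x})} > 0.
\]

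With this identity in hand, the four parameters can be read off by evaluating on the family $f_\lambda(x) := e^{-\lambda x}$, $\lambda > 0$, which lies in $\sG$ and gives
\[
p_1 + \lambda p_2 + \tfrac{\lambda^2}{2}p_3 + L(\lambda) = c\bigl[\tilde p_1 + \lambda \tilde p_2 + \tfrac{\lambda^2}{2}\tilde p_3 + \tilde L(\lambda)\bigr], \qquad \lambda > 0,
\]
where $L(\lambda) := \int_{(0,\infty)}(1-e^{-\lambda x})\,p_4(\dd x)$ and analogously for $\tilde L$. Using the elementary bound $L(\lambda) \leq \lambda \int_{(0,1]} x\,p_4(\dd x) + p_4((1,\infty))$ combined with dominated convergence (dominant $\min(x,1)\in L^1(p_4)$), I would successively extract $p_3 = c\tilde p_3$ (divide by $\lambda^2$ and let $\lambda \to \infty$), $p_2 = c\tilde p_2$ (divide by $\lambda$ and let $\lambda \to \infty$), and $p_1 = c\tilde p_1$ (let $\lambda \to 0^+$). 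The residual identity $L(\lambda) = c\,\tilde L(\lambda)$ on $(0,\infty)$, after one differentiation in $\lambda$, becomes the agreement of the Laplace transforms of the $\sigma$-finite measures $x\,p_4(\dd x)$ and $c\,x\,\tilde p_4(\dd x)$ on $(0,\infty)$, whence $p_4 = c\,\tilde p_4$ by injectivity of the Laplace transform. Finally, substituting $p = c\tilde p$ into \eqref{eq:330} forces $1 = c$, so $p = \tilde p$.

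The step I expect to be the main obstacle is the upgrade from $\cD(\sL) \subset \sG_p \cap \sG_{\tilde p}$ to $\sG_p = \sG_{\tilde p}$: making it clean requires both the codimension-one character of $\cD(\sL)$ borrowed from the necessity direction of Theorem~\ref{THM32} and the strict positivity of $\Phi_p(e^{-x})$ coming from the normalization, and bypassing this structural observation in favor of a direct asymptotic analysis of \eqref{eq:A3} seems to force an ad hoc case split on which of the four parameters vanishes.
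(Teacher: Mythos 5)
Your proof is correct, and it takes a genuinely different and more structural route than the paper. The paper's proof of Proposition~\ref{PROA2} is a four-step case analysis keyed to which of $p_1,p_2,p_3$ vanish; it repeatedly extracts information from the $\alpha\to 0$ and $\alpha\to\infty$ asymptotics of \eqref{eq:A3}, leans on the resolvent estimates of Lemma~\ref{LMA1} to justify the dominated-convergence passages, and at one point needs to exhibit a test function $h\in C_0([0,\infty))$ satisfying $\int_0^\infty e^{-\sqrt{2\alpha}x}h(x)\,\dd x=0$ together with $h(0)\neq 0$. Your argument replaces all of this with one linear-algebra observation: $\cD(\sL)$ is contained in both kernels $\sG_p$, $\sG_{\tilde p}$ and is itself of codimension one in $\sG$ (since $\Phi_{p^\ast}(e^{-x})>0$ under the normalization \eqref{eq:330}, and the same positivity shows $\Phi_p,\Phi_{\tilde p}\not\equiv 0$), so $\sG_p=\cD(\sL)=\sG_{\tilde p}$ and hence $\Phi_p=c\,\Phi_{\tilde p}$ for some $c>0$. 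Testing on the exponentials $e^{-\lambda x}\in\sG$ then reads off $p_3,p_2,p_1$ from the $\lambda^2,\lambda^1,\lambda^0$ coefficients (the dominated-convergence claims you state, $L(\lambda)/\lambda^2\to 0$, $L(\lambda)/\lambda\to 0$, $L(\lambda)\to 0$, all hold with dominant $\min(x,1)\in L^1(p_4)$), and the residual identity $L(\lambda)=c\,\tilde L(\lambda)$ differentiates to an equality of Laplace transforms of $x\,p_4(\dd x)$ and $cx\,\tilde p_4(\dd x)$, which determines $p_4=c\,\tilde p_4$; the normalization then forces $c=1$. This is shorter, avoids the case split entirely, and makes Lemma~\ref{LMA1} unnecessary for uniqueness. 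One point worth making explicit, which the proposal elides, is why the common value of the two quotients in \eqref{eq:A3} actually equals $G_\alpha h(0)$ for a genuine Feller's Brownian motion $Y$: this needs either the preamble to the proposition (both quadruples already characterize the same $Y$) or the sufficiency direction of Theorem~\ref{THM32} (which manufactures $Y$ from the quadruple $p$), since \eqref{eq:A3} by itself is just an identity between two formulas. The obstacle you flag at the end, the upgrade to $\sG_p=\sG_{\tilde p}$, works exactly as you describe and is not a gap.
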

\begin{proof}
\emph{Step 1: $p_1=0$ (resp. $p_3=0$) if and only if $\tilde{p}_1=0$ (resp. $\tilde{p}_3=0$).} 

Take a positive function $h\in C_c([0,\infty))$ such that $h(0)>0$. Applying \eqref{eq:A3} to this function and letting $\alpha \downarrow 0$,  we obtain
\[
	\begin{aligned}
		&\frac{2p_2\int_{(0,\infty)}h(x)\dd x+p_3h(0)+\int_{(0,\infty)}G^0 h(x)p_4(\dd x)}{p_1} \\
		&\qquad\qquad  =\frac{2\tilde p_2\int_{(0,\infty)}h(x)\dd x+\tilde p_3h(0)+\int_{(0,\infty)}G^0 h(x)\tilde p_4(\dd x)}{\tilde p_1}.
		\end{aligned}
\]
According to the constraint condition \eqref{eq:38}, the numerators on both sides are finite and strictly positive (finiteness is ensured by \eqref{eq:B42}).  Hence, $p_1=0$ if and only if $\tilde{p}_1=0$. 

To establish the equivalence between  $p_3=0$ and $\tilde{p}_3=0$, we argue by contradiction.  Suppose $p_3=0, \tilde{p}_3>0$. Consider first the case $p_2=0$.  Then,  by  \eqref{eq:38},  we must have $|p_4|=\infty$. Choosing $h\in C_c((0,\infty))$ such that $\int_0^\infty h(x)p_4(\dd x)>0$, we obtain
\[
		\frac{\int_{(0,\infty)}\alpha G^0_\alpha h(x)p_4(\dd x)}{\frac{p_1}{\alpha}+\int_{(0,\infty)}\frac{1-e^{-\sqrt{2\alpha}x}}{\alpha}p_4(\dd x)}  =\frac{2\tilde p_2\int_{(0,\infty)}\alpha e^{-\sqrt{2\alpha}x}h(x)\dd x+\int_{(0,\infty)}\alpha G^0_\alpha h(x)\tilde p_4(\dd x)}{\frac{\tilde p_1}{\alpha}+\frac{\sqrt{2\alpha}\tilde p_2}{\alpha}+ \tilde p_3+\int_{(0,\infty)}\frac{1-e^{-\sqrt{2\alpha}x}}{\alpha}\tilde p_4(\dd x)}.
\]
As $\alpha \uparrow \infty$,  the left hand side diverges to $\infty$,  while the right hand side converges to 
\[
	\frac{\int_0^\infty h(x)\tilde{p}_4(\dd x)}{\tilde{p}_3}<\infty,
\]
which is a contradiction. 

Now consider the case $p_2>0$. If $|p_4|>0$, the same contradiction follows by the same argument. If instead $|p_4|=0$,  then for all $h\in C_c((0,\infty))$,  
\[
\begin{aligned}
		&\frac{2p_2\int_{(0,\infty)}\alpha^{3/2} e^{-\sqrt{2\alpha}x}h(x)\dd x}{\frac{p_1}{\sqrt{\alpha}}+\sqrt{2}p_2} \\
		&\qquad\qquad  =\frac{2\tilde p_2\int_{(0,\infty)}\alpha e^{-\sqrt{2\alpha}x}h(x)\dd x+\int_{(0,\infty)}\alpha G^0_\alpha h(x)\tilde p_4(\dd x)}{\frac{\tilde p_1}{\alpha}+\frac{\sqrt{2\alpha}\tilde p_2}{\alpha}+ \tilde p_3+\int_{(0,\infty)}\frac{1-e^{-\sqrt{2\alpha}x}}{\alpha}\tilde p_4(\dd x)}.
\end{aligned}
\]
Taking $\alpha\rightarrow \infty$, we deduce $\int_0^\infty h(x)\tilde{p}_4(\dd x)=0$.  Since this holds for all $h\in C_c((0,\infty))$,  it follows that $|\tilde{p}_4|=0$. Substituting $|p_4|=|\tilde{p}_4|=0$ into \eqref{eq:A3},  we obtain
\[
		\frac{2p_2\int_{(0,\infty)} e^{-\sqrt{2\alpha}x}h(x)\dd x}{p_1+\sqrt{2\alpha}p_2}  =\frac{2\tilde p_2\int_{(0,\infty)} e^{-\sqrt{2\alpha}x}h(x)\dd x+\tilde p_3 h(0)}{p_1+\sqrt{2\alpha}\tilde p_2+\alpha \tilde p_3}.
\]
To reach a contradiction, it suffices to find a function $h\in C_0([0,\infty))$ such that 
\[
	\int_0^\infty e^{-\sqrt{2\alpha}x}h(x)\dd x=0,\quad h(0)\neq 0.
\]
Such a function indeed exists. For instance, take $h_1\in C_0([0,\infty))$ with $h_1(0)\neq 0$ and a positive function $h_2\in C_c((0,\infty))$.  If  
\[
\gamma_1:=\int_{(0,\infty)} e^{-\sqrt{2\alpha}x}h_1(x)\dd x=0, 
\]
then $h_1$ is the desired function.  If $\gamma_1\neq 0$,  define
\[
	h:=h_1-\frac{\gamma_1}{\gamma_2}h_2,
\]
where $\gamma_2:=\int_{(0,\infty)} e^{-\sqrt{2\alpha}x}h_2(x)\dd x>0$. 
Then $\int_{(0,\infty)} e^{-\sqrt{2\alpha}x} h(x)\dd x = 0$, and $h(0) = h_1(0) \neq 0$, as desired.

\emph{Step 2: If $p_1=p_2=p_3=\tilde{p}_1=\tilde{p}_2=\tilde{p}_3=0$, then $p_4=\tilde{p}_4$.}

Under this condition,  we have
\[
\begin{aligned}
		\frac{\int_{(0,\infty)}G^0_\alpha h(x)p_4(\dd x)}{\int_{(0,\infty)}\left(1-e^{-\sqrt{2\alpha}x}\right)p_4(\dd x)} =\frac{\int_{(0,\infty)}G^0_\alpha h(x)\tilde p_4(\dd x)}{\int_{(0,\infty)}\left(1-e^{-\sqrt{2\alpha}x}\right)\tilde p_4(\dd x)}
\end{aligned}
\]
for any $\alpha>0$ and $h\in C_0([0,\infty))$. Let $c_1$ and $\tilde{c}_1$ denote the denominators on the left and right hand sides, respectively, when $\alpha = 1$.  Then
\begin{equation}\label{eq:A55}
	\int_0^\infty G^0_1 h(x)p_4(\dd x)=c\int_0^\infty G^0_1 h(x)\tilde p_4(\dd x)
\end{equation}
for all $h\in C_0([0,\infty))$, where $c:=c_1/\tilde{c}_1$. For any $\beta>0$,  let $h=G^0_\beta h_1$ with $h_1\in C_0([0,\infty))$.  By the resolvent equation,
\[
	G^0_1h_1-G^0_\beta h_1=(\beta-1)G^0_1G^0_\beta h_1=(\beta-1)G^0_1 h. 
\]
Since $h, h_1\in C_0([0,\infty))$, it follows from \eqref{eq:A55} that
\[
	\int_0^\infty \beta G^0_\beta h_1(x) p_4(\dd x)=c\int_0^\infty \beta G^0_\beta h_1(x)\tilde p_4(\dd x),\quad \forall \beta>0, h_1\in C_0([0,\infty)). 
\]
Now take any $h_1\in C_c((0,\infty))$ and let $\beta\uparrow \infty$.  By the first statement of Lemma~\ref{LMA1} and the dominated convergence theorem,  we obtain
\[
	\int_0^\infty h_1(x)p_4(\dd x)=c\int_0^\infty h_1(x)\tilde{p}_4(\dd x),\quad \forall h_1\in C_c((0,\infty)). 
\]
Therefore, $p_4=c\tilde{p}_4$. In view of \eqref{eq:330}, we must have $c=1$, so $p_4=\tilde{p}_4$. 


\emph{Step 3: Prove the conclusion for the case $p_3=\tilde{p}_3=0$.}

We first show that in this case, $p_2=0$ is equivalent to $\tilde{p}_2=0$. Suppose, for the sake of contradiction, that $p_2=0$ (which implies $|p_4|=\infty$) but $\tilde{p}_2>0$. Taking $h\in C_c((0,\infty))$ such that $\int_0^\infty h(x)p_4(\dd x)>0$, we have
\[
		\frac{\int_{(0,\infty)}\alpha G^0_\alpha h(x)p_4(\dd x)}{\frac{p_1}{\sqrt{\alpha}}+\int_{(0,\infty)}\frac{1-e^{-\sqrt{2\alpha}x}}{\sqrt{\alpha}}p_4(\dd x)} =\frac{2\tilde p_2\int_{(0,\infty)}\alpha e^{-\sqrt{2\alpha}x}h(x)\dd x+\int_{(0,\infty)}\alpha G^0_\alpha h(x)\tilde p_4(\dd x)}{\frac{\tilde p_1}{\sqrt{\alpha}}+\sqrt{2}\tilde p_2+\int_{(0,\infty)}\frac{1-e^{-\sqrt{2\alpha}x}}{\sqrt{\alpha}}\tilde p_4(\dd x)}.
\]
Letting $\alpha\rightarrow \infty$ leads to a contradiction.

Now assume that $p_2=\tilde{p}_2=0$. The condition \eqref{eq:38} indicates that $|p_4|=|\tilde{p}_4|=\infty$.  If $p_1=\tilde p_1=0$, then the conclusion follows from the second step. If $p_1,\tilde{p}_1>0$, then we have
\[
\begin{aligned}
		\frac{\int_{(0,\infty)}G^0_\alpha h(x)p_4(\dd x)}{p_1+\int_{(0,\infty)}\left(1-e^{-\sqrt{2\alpha}x}\right)p_4(\dd x)} =\frac{\int_{(0,\infty)}G^0_\alpha h(x)\tilde p_4(\dd x)}{\tilde p_1+\int_{(0,\infty)}\left(1-e^{-\sqrt{2\alpha}x}\right)\tilde p_4(\dd x)}. 
\end{aligned}
\]
We can use the same argument as in the second step to conclude that $p_4=c\tilde{p}_4$ for the constant
\[
c=\frac{p_1+\int_{(0,\infty)}\left(1-e^{-\sqrt{2\alpha}x}\right)p_4(\dd x)}{\tilde p_1+\int_{(0,\infty)}\left(1-e^{-\sqrt{2\alpha}x}\right)\tilde p_4(\dd x)}=\frac{p_1+c\int_{(0,\infty)}\left(1-e^{-\sqrt{2\alpha}x}\right)\tilde p_4(\dd x)}{\tilde p_1+\int_{(0,\infty)}\left(1-e^{-\sqrt{2\alpha}x}\right)\tilde p_4(\dd x)}.
\]
Thus $p_1=c\tilde{p}_1$, and because of \eqref{eq:330}, we must have $c=1$. 

It remains to consider the case where $p_2,\tilde{p}_2>0$. We have
\begin{equation}\label{eq:A66}
	\begin{aligned}
		&\frac{2p_2\int_{(0,\infty)}\alpha e^{-\sqrt{2\alpha}x}h(x)\dd x+\int_{(0,\infty)}\alpha G^0_\alpha h(x)p_4(\dd x)}{\frac{p_1}{\sqrt{\alpha}}+\sqrt{2}p_2+\int_{(0,\infty)}\frac{1-e^{-\sqrt{2\alpha}x}}{\sqrt{\alpha}}p_4(\dd x)} \\
		&\qquad\qquad  =\frac{2\tilde p_2\int_{(0,\infty)}\alpha e^{-\sqrt{2\alpha}x}h(x)\dd x+\int_{(0,\infty)}\alpha G^0_\alpha h(x)\tilde p_4(\dd x)}{\frac{\tilde p_1}{\sqrt{\alpha}}+\sqrt{2}\tilde p_2+\int_{(0,\infty)}\frac{1-e^{-\sqrt{2\alpha}x}}{\sqrt{\alpha}}\tilde p_4(\dd x)}.
\end{aligned}
\end{equation}
Taking $h\in C_c((0,\infty))$ and letting $\alpha\uparrow \infty$, we obtain 
\[
\frac{1}{p_2}\int_0^\infty h(x)p_4(\dd x)=\frac{1}{\tilde p_2}\int_0^\infty h(x)\tilde p_4(\dd x),\quad \forall h\in C_c((0,\infty)).
\]
Therefore, $p_4=c\tilde{p}_4$ for $c=p_2/\tilde{p}_2>0$. Substituting these two relations into \eqref{eq:A66}, we further obtain $p_1=c\tilde{p}_1$. Finally, by virtue of \eqref{eq:330}, it follows that $c=1$. 

\emph{Step 4: Prove the conclusion for the case $p_3,\tilde{p}_3>0$.}

Let $h \in C_c((0,\infty))$ in \eqref{eq:A3}. After multiplying the numerators on both sides by $\alpha$ and dividing the denominators by $\alpha$, we let $\alpha \to \infty$. Under this limiting procedure, we conclude that
\[
	\frac{1}{p_3}\int_0^\infty h(x)p_4(\dd x)=\frac{1}{\tilde p_3}\int_0^\infty h(x)\tilde p_4(\dd x),\quad \forall h\in C_c((0,\infty)).
\]
This implies that $p_4=c\tilde{p}_4$ for $c=p_3/\tilde{p}_3>0$. 

Let $q_\alpha:=p_1+\sqrt{2\alpha}p_2$ and $\tilde q_\alpha:=\tilde p_1+\sqrt{2\alpha}\tilde p_2$. If $|p_4|=|\tilde p_4|=0$, then by comparing the coefficients of $h(0)$ and $\int_0^\infty e^{-\sqrt{2\alpha}x}h(x)\dd x$,  we obtain
\[
	\tilde p_3 q_\alpha=c\tilde{p}_3 \tilde{q}_\alpha,\quad \forall \alpha>0,
\]
which immediately implies $p_1=c\tilde{p}_1$ and $p_2=c\tilde{p}_2$.  Hence, $c=1$, and the conclusion follows. 

Finally, consider the case where $|\tilde{p}_4|>0$. Substituting $p_4=c\tilde{p}_4$ and  $p_3=c\tilde{p}_3$ into \eqref{eq:A3}, and taking $h\in C_c((0,\infty))$ such that $\int_0^\infty h(x)\tilde{p}_4(\dd x)\neq 0$, we obtain
\[
\begin{aligned}
&\left(\frac{q_\alpha}{\sqrt{\alpha}}-c\frac{\tilde{q}_\alpha}{\sqrt{\alpha}}\right)\int_0^\infty \alpha G^0_\alpha h(x)\tilde{p}_4(\dd x)=2\left(\int_0^\infty \alpha^{3/2} e^{-\sqrt{2\alpha}x}h(x)\dd x \right)\cdot  \\
&\qquad \qquad \left[(p_2\frac{\tilde q_\alpha}{\alpha}-\tilde{p}_2\frac{q_\alpha}{\alpha})+(p_2-c\tilde{p}_2)\left(\tilde{p}_3+\int_0^\infty \frac{1-e^{-\sqrt{2\alpha}x}}{\alpha}\tilde{p}_4(\dd x)\right)\right].
\end{aligned}\]
Letting $\alpha\rightarrow \infty$,  we deduce that $p_2=c\tilde{p}_2$.  It then follows readily that $p_1=c\tilde{p}_1$. Therefore, the conclusion is established.
\end{proof}
}

\section{Proof of Theorem~\ref{THMA4}}\label{APPA}

\begin{proof}
For convenience, we will consider $\hat{X}$ and $\hat{X}^{(n)}$ instead of $X$ and $X^{(n)}$.  Denote by $\hat{\zeta}^{(n)}$ the lifetime of $\hat{X}^{(n)}$.  According to the definition of the $C(\hat{\eta}^{(n)}_m,\hat{\sigma}^{(n)}_m)$-transformation,   it is straightforward to observe that
\[
	\hat{\zeta}^{(1)}\leq \hat\zeta^{(2)}\leq \cdots\leq \hat{\zeta}^{(n)}\leq \cdots \leq \hat{\zeta}.  
\]
Let $\hat{\zeta}_\infty:=\lim_{n\rightarrow \infty}\hat{\zeta}^{(n)}$ ($\leq \hat{\zeta}$).  

Firstly,  we show that there exists an integer $N$ such that for any $n\geq N$,
\begin{equation}\label{eq:610}
	\bP_{\hat{c}_i}\left(\hat{\sigma}^{(n)}_m<\infty\right)=1,\quad \forall m\geq 1,  i\in \bN.  
\end{equation}
It suffices to prove \eqref{eq:610} for some integer $n=N$,  due to $\hat{\sigma}^{(n+1)}_m\leq \hat{\sigma}^{(n)}_m$ for all $n\in \bN$.  
To do this,  note that we can find some $t\geq 0$ and $N\geq 1$ such that
\begin{equation}\label{eq:611}
	\bP_0(\hat X_t\geq \hat{c}_N \text{ or }\hat{X}_t=\partial)>0,
\end{equation}
because otherwise 
\[
	1=\bP_0\left( \bigcap_{t\in \mathbb Q_+,n\geq 1}\left\{\hat{X}_t<\hat{c}_n \right\} \right)=\bP_0\left( \hat{X}_t=0,\forall t\geq 0\right),
\]
where $\mathbb Q_+$ is the set of all positive rational numbers. 
To prove \eqref{eq:610} for $n=N$ with $N$ in \eqref{eq:611},  consider the case $m=1$. We note that $\hat{\sigma}^{(N)}_1=\hat{\sigma}^{(N)}\circ \theta_{\hat{\eta}}$,  and {\color{blue}since the boundary point is regular, it holds that} $\hat{\eta}<\infty$,  $\bP_{\hat{c}_i}$-a.s.  It follows that
\[
	\bP_{\hat{c}_i}\left(\hat{\sigma}^{(N)}_1<\infty\right)=\bP_0\left(\hat{\sigma}^{(N)}<\infty\right)\geq \bP_0(\hat X_t\geq \hat{c}_N \text{ or }\hat{X}_t=\partial)>0.
\]
Thus, there exists a constant $T>0$ such that
\[
	\bP_{\hat{c}_i}\left(\hat{\sigma}^{(N)}_1\geq T\right)=\bP_{0}\left(\hat{\sigma}^{(N)}\geq T\right)=:\alpha <1.
\]
For $k\geq 2$,  it follows from the strong Markov property that
\[
\begin{aligned}
	\bP_{\hat{c}_i}\left(\hat{\sigma}^{(N)}_1\geq kT\right)&=\bP_{\hat{c}_i}\left(\hat{\sigma}^{(N)}_1\circ \theta_{(k-1)T}\geq T;\hat{\sigma}^{(N)}_1\geq (k-1)T\right) \\
	&=\bP_{\hat{c}_i}\left(\bP_{\hat{X}_{(k-1)T}}\left(\hat{\sigma}^{(N)}_1\geq T\right);\hat{\sigma}^{(N)}_1\geq (k-1)T\right) \\
	&\leq \alpha  \bP_{\hat{c}_i}\left(\hat{\sigma}^{(N)}_1\geq (k-1)T\right).
\end{aligned}\]
By induction,  we get $\bP_{\hat{c}_i}\left(\hat{\sigma}^{(N)}_1\geq kT\right)\leq \alpha^k$ for all $k\geq 0$.  Hence,
\[
	\bE_{\hat{c}_i}\hat{\sigma}^{(N)}_1\leq T\sum_{k\geq 0}\bP_{\hat{c}_i}(\hat{\sigma}^{(N)}_1\geq kT)\leq T\sum_{k\geq 0}\alpha^k<\infty. 
\]
This indicates \eqref{eq:610} for the case $m=1$.  For general $m\geq 2$,  \eqref{eq:610} can also be established by using the strong Markov property.  We provide details for the case $m=2$,  with the other cases handled by induction.  In fact,  we have
\[
\begin{aligned}
	\bP_{\hat{c}_i}\left(\hat{\sigma}^{(N)}_2<\infty\right)&=\bP_{\hat{c}_i}\left(\hat{\sigma}^{(N)}_2<\infty; \hat X_{\hat{\sigma}_1^{(N)}}\in E\right)+\bP_{\hat{c}_i}\left(\hat{\sigma}^{(N)}_2<\infty; \hat X_{\hat{\sigma}_1^{(N)}}=\partial\right) \\
	&=\bP_{\hat{c}_i}\left(\bP_{\hat X_{\hat{\sigma}_1^{(N)}}}\left(\hat{\sigma}^{(N)}_1<\infty\right); \hat X_{\hat{\sigma}_1^{(N)}}\in E\right)+\bP_{\hat{c}_i}\left(\hat X_{\hat{\sigma}_1^{(N)}}=\partial\right) \\
	&=\bP_{\hat{c}_i}\left(\hat X_{\hat{\sigma}_1^{(N)}}\in E\right)+\bP_{\hat{c}_i}\left(\hat X_{\hat{\sigma}_1^{(N)}}=\partial\right)=1. 
\end{aligned}\]

Next,  we prove that for all $i\in \bN$,
\[
	\hat{\zeta}_\infty=\hat{\zeta},\quad \bP_{\hat{c}_i}\text{-a.s.}
\]
 If $\hat{\zeta}(\omega)<\infty$,  then according to the definition of $\hat{X}^{(n)}$,  we have
 \[
 	\hat{\zeta}^{(n)}(\omega)\geq \hat{\zeta}(\omega)-\left|\{0\leq t<\hat{\zeta}(\omega):\hat{X}_t(\omega)<\hat{c}_n\} \right|,
 \]
 where $|\cdot|$ stands for the Lebesgue measure of the time set. Note that $$\left|\{0\leq t<\hat{\zeta}(\omega): \hat{X}_t(\omega)=0\}\right|=0,  \quad\bP_{\hat{c}_i}\text{-a.s.} $$
 It follows that 
 \[
 \begin{aligned}
 \hat{\zeta}_\infty(\omega)&=\lim_{n\rightarrow \infty}\hat{\zeta}^{(n)}(\omega)\geq \hat{\zeta}(\omega)-\lim_{n\rightarrow \infty}\left|\{0\leq t<\hat{\zeta}(\omega):\hat{X}_t(\omega)<\hat{c}_n\} \right| \\
 &=\hat{\zeta}(\omega)-\lim_{n\rightarrow \infty}\left|\{0\leq t<\hat{\zeta}(\omega): \hat{X}_t(\omega)=0\}\right|=\hat{\zeta}(\omega).
 \end{aligned}\]
 Thus, $\hat{\zeta}_\infty(\omega)=\hat{\zeta}(\omega)$ whenever $\hat{\zeta}(\omega)<\infty$.  For the case $\hat{\zeta}(\omega)=\infty$,  we prove that $\hat{\zeta}^{(n)}(\omega)=\infty$ for all $n\geq N$ with $N$ satisfying \eqref{eq:610}.  It suffices to consider the case where $\hat{\lambda}^{(n)}(\{\partial\})>0$,  because otherwise $\hat{X}^{(n)}$ is honest by Theorem~\ref{THMB1}.  If $\hat{\zeta}^{(n)}<\infty$,  then the total duration discarded from $\hat{X}_\cdot(\omega)$ in constructing $\hat{X}^{(n)}_\cdot(\omega)$ must be  infinite.  It follows from \eqref{eq:610} that
 \[
 	\{\hat{\zeta}^{(n)}<\infty,\hat{\zeta}=\infty\}\subset \{\hat{X}_{\hat{\sigma}^{(n)}_m}\in E,\forall m\geq 1\}.
 \]
 By the strong Markov property,  for any integer $M\geq 1$,
 \[
 \bP_{\hat{c}_i}\left(\hat{X}_{\hat{\sigma}^{(n)}_m}\in E,1\leq m\leq M\right)=(1-\hat{\lambda}^{(n)}(\{\partial\}))^M.
 \]
 Therefore, 
 \[
 	\bP_{\hat{c}_i}\left(\hat{\zeta}^{(n)}<\infty,\hat{\zeta}=\infty\right)\leq \lim_{M\rightarrow\infty}(1-\hat{\lambda}^{(n)}(\{\partial\}))^M=0.
 \]
 This implies $\hat{\zeta}_\infty(\omega)=\hat{\zeta}(\omega)$ for $\hat{\zeta}(\omega)=\infty$.

We are now prepared to prove \eqref{eq:612}.   Let us first establish
\begin{equation}\label{eq:A4}
	\lim_{n\rightarrow \infty}\hat{X}^{(n)}_t=\hat{X}_t,\quad \bP_{\hat{c}_i}\text{-a.s.}
\end{equation}
for a fixed $t\geq 0$.  
For $\omega\in \Omega$,  let $\rho^{(n)}_t(\omega)$ represent the total duration discarded from the path of $\hat{X}_\cdot(\omega)$ in constructing $\hat{X}^{(n)}_s(\omega)$ for $0\leq s\leq t$, namely,  $\hat{X}^{(n)}_t(\omega)=\hat{X}_{t+\rho^{(n)}_t(\omega)}(\omega)$. Obviously,  $\rho^{(n)}_t(\omega)$ is decreasing as $n\rightarrow \infty$.  If $t\geq \hat{\zeta}(\omega)=\hat{\zeta}_\infty(\omega)$,  then 
\[
	\hat{X}_t(\omega)=\hat{X}^{(n)}_t(\omega)=\partial,\quad \forall n\in \bN.
\]
If $t<\hat{\zeta}(\omega)=\hat{\zeta}_\infty(\omega)$,  then there exists an integer $n_0$ (which may depend on $\omega$) such that 
\[
	t<\hat{\zeta}^{(n_0)}(\omega)\leq \hat{\zeta}^{(n)}(\omega),\quad \forall n\geq n_0.
\]
This implies
\[
\rho^{(n)}_t(\omega)\leq \rho^{(n_0)}_t(\omega)<\infty. 
\]
We have
\[
\begin{aligned}
	\rho^{(n)}_t(\omega)&\leq \left|\{0\leq s\leq t+\rho^{(n)}_t(\omega): \hat{X}_s(\omega)<\hat{c}_n\}\right| \\
		&\leq \left|\{0\leq s\leq t+\rho^{(n_0)}_t(\omega): \hat{X}_s(\omega)<\hat{c}_n\}\right|.
\end{aligned}\]
Thus 
\[
\begin{aligned}
	\lim_{n\rightarrow \infty} \rho^{(n)}_t(\omega)&\leq \lim_{n\rightarrow \infty}\left|\{0\leq s\leq t+\rho^{(n_0)}_t(\omega): \hat{X}_s(\omega)<\hat{c}_n\}\right|  \\
	&=\left|\{0\leq s\leq t+\rho^{(n_0)}_t(\omega): \hat{X}_s(\omega)=0\}\right|=0.
\end{aligned}\]
By the right continuity of $\hat{X}$,  we obtain
\[
	\lim_{n\rightarrow \infty}\hat{X}^{(n)}_t(\omega)=\lim_{n\rightarrow \infty} \hat{X}_{t+\rho^{(n)}_t(\omega)}(\omega)=\hat{X}_t(\omega),
\]
whenever $t<\hat{\zeta}(\omega)$.  Therefore,  \eqref{eq:A4} is established.  
Finally,  it remains to observe that $\rho^{(n)}_{t'}(\omega)\leq \rho^{(n)}_t(\omega)$ for all $t'\leq t$,  which,  in conjunction with \eqref{eq:A4}, leads to
\begin{equation}\label{eq:A5}
	\bP_{\hat{c}_i}\left(\lim_{n\rightarrow \infty}\hat{X}^{(n)}_{t'}=\hat{X}_{t'},\quad \forall 0\leq t'\leq t \right)=1
\end{equation}
for any fixed $t\geq 0$.  Clearly,  \eqref{eq:A5} implies the desired \eqref{eq:612}.
\end{proof}

\section*{Acknowledgement}

The author wishes to thank Professor Patrick J.  Fitzsimmons from the University of California, San Diego, for his invaluable suggestion, which inspired the exploration of this topic. {\color{blue}The author gratefully acknowledges the referee for a thorough reading of the manuscript and for offering numerous valuable suggestions. These insightful comments have substantially improved the quality of the paper. }

\bibliographystyle{plain} 
\bibliography{PathBD}

\end{document}